\theoremstyle{plain}
\newtheorem{theorem}{Theorem}[section]
\newtheorem{proposition}[theorem]{Proposition}
\newtheorem{lemma}[theorem]{Lemma}
\newtheorem{corollary}[theorem]{Corollary}
\newtheorem{definition}[theorem]{Definition}
\newtheorem{construction}[theorem]{Construction} 
\theoremstyle{definition}
\newtheorem{example}{Example}[section]
\newtheorem{remark}{Remark}[section]
\newtheorem{assumption}[theorem]{Assumption}
\begin{document}

\pagestyle{plain}

\title{Spencer Differential Degeneration Theory and Its Applications in Algebraic Geometry}
\author{Dongzhe Zheng}
\date{}
\maketitle

\begin{abstract}
Based on the compatible pair theory of principal bundle constraint systems, this paper discovers and establishes a complete Spencer differential degeneration theory. We prove that when symmetric tensors satisfy a $\lambda$-dependent kernel condition $\delta_{\mathfrak{g}}^{\lambda}(s)=0$, the Spencer differential degenerates to the standard exterior differential, thus establishing a precise bridge between the complex Spencer theory and the classical de Rham theory. One of the advances in this paper is the rigorous proof that this degeneration condition remains stable under mirror transformations, revealing the profound symmetry origins of this phenomenon. Based on these rigorous mathematical results, we construct a canonical mapping from degenerate Spencer cocycles to de Rham cohomology and elucidate its geometric meaning. Finally, we demonstrate the application potential of this theory in algebraic geometry, particularly on K3 surfaces, where we preliminarily verify that this framework can systematically identify (1,1)-Hodge classes satisfying algebraicity conditions. This work provides new perspectives and technical approaches for studying algebraic invariants using tools from constraint geometry.
\end{abstract}

\section{Introduction}
\label{sec:introduction}

In the development of modern mathematics, establishing deep connections between different branches has always been the core driving force for theoretical progress. A prominent manifestation of this theme is how to establish precise correspondences between geometric constructions originating from physics—such as principal bundle connection geometry in gauge field theory\cite{KobayashiNomizu1963, Atiyah1957, ChernWeil1944}—and highly abstract algebraic geometric theories. Since Hodge proposed his famous conjecture in 1950\cite{Hodge1950}, understanding when differential topological invariants of geometric objects can be represented by algebraic objects has become one of the most profound problems in modern mathematics.

The historical roots of this research direction can be traced back to the pioneering work of Riemann and Roch in the late 19th century\cite{Riemann1857}, who established the relationship between the dimension of analytic function spaces on complex surfaces and topological invariants. Subsequently, Lefschetz's profound work\cite{Lefschetz1924} revealed the connection between algebraic cycles and topological cycles, while the characteristic class theory of Chern and Weil\cite{ChernWeil1944} provided powerful topological tools for principal bundle geometry. Entering the second half of the 20th century, Grothendieck's GAGA principle\cite{Serre1956, Grothendieck1961} and the generalized Riemann-Roch theorem\cite{BorelSerre1958, AtiyahSinger1963} further deepened the correspondence between analytic geometry and algebraic geometry.

Developing in parallel is the mathematical theory of constraint systems. From constraint theory in Lagrangian mechanics\cite{ArnoldMarsden1978} to modern gauge field theory\cite{YangMills1954}, constraint geometry has always been a core tool in theoretical physics. Spencer's pioneering work in the 1960s\cite{Spencer1962, SpencerQuillian1963} established the formal theory of partial differential equation systems, where Spencer sequences became fundamental tools for studying overdetermined systems. However, efforts to unify these different mathematical structures have long faced essential technical obstacles.

In recent years, a principal bundle constraint geometry theory originating from constraint mechanics—the compatible pair theory\cite{zheng2025dynamical}—has provided a new research perspective for this grand picture. This theory attempts to provide a unified mathematical framework for complex constraint systems by establishing geometric compatibility between constraint distributions and dual constraint functions. The Spencer complex theory developed on this basis\cite{zheng2025constructing} introduces canonical metric structures and proves the ellipticity of corresponding complexes, thus establishing a complete Hodge decomposition theory. More remarkably, this theoretical system was discovered to possess profound mirror symmetry properties\cite{zheng2025mirror}, and through the GAGA principle and Spencer-Riemann-Roch formulas\cite{zheng2025spencer-riemann-roch, zheng2025analytic}, it established preliminary connections with algebraic geometry.

The core contribution of this paper lies in discovering and systematically analyzing a key mechanism that connects the above two mathematical worlds: \textbf{the degeneration phenomenon of Spencer differentials}. We prove that under specific algebraic conditions $\delta_{\mathfrak{g}}^{\lambda}(s)=0$ determined by compatible pairs $(D,\lambda)$, the originally complex Spencer differential operator significantly simplifies to the standard exterior differential operator $d$. This phenomenon forms the foundation for all subsequent theoretical developments in this paper, as it fundamentally simplifies the structure of Spencer complexes, establishing direct comparability with classical de Rham theory\cite{deRham1931, deRham1955}.

Based on this core discovery, we unfold systematic theoretical research. We first rigorously prove the stability of this degeneration condition under mirror transformations $(D,\lambda)\mapsto(D,-\lambda)$, which shows that the degeneration phenomenon itself has intrinsic symmetry. Next, we deeply analyze the structural consequences of the degeneration phenomenon, clarify that the space formed by degenerate elements does not constitute a strict subcomplex, and use this as a starting point to construct a canonical projection mapping from "degenerate cocycles" to de Rham cohomology. This mapping represents the core technical tool provided by this paper, precisely "translating" geometric information in Spencer theory into the language of classical differential geometry.

The degeneration theory we establish fundamentally aims to provide a new set of analytical tools for connecting \textbf{constraint geometry} with \textbf{algebraic geometry}. The theoretical significance of this framework is multifaceted: it not only enriches Spencer theory itself by revealing an intrinsic simplification mechanism, but also provides a new method for systematically constructing de Rham cohomology classes with special properties starting from the geometric properties of constraint systems. These cohomology classes "filtered" by Spencer theory naturally inherit additional geometric information from constraints, symmetries, and compatibility conditions due to their construction process.

The application prospects of these tools are noteworthy, with one particularly meaningful direction being providing new geometric insights for classical problems in algebraic geometry. Through applications on K3 surfaces\cite{Beauville1983, Barth1984}, this paper verifies the effectiveness of this framework: we prove that $(1,1)$-Hodge classes constructed by Spencer theory that satisfy mirror stability and other algebraic constraints are indeed algebraic, which is consistent with the results of Lefschetz's $(1,1)$-theorem\cite{Lefschetz1924, Griffiths1969}. This provides a valuable foundation for extending this framework to higher-dimensional manifolds to study more general algebraic cycle problems—such as those related to Hodge's conjecture. We hope that the theory developed in this paper can provide a new, promising analytical path for interdisciplinary research in modern geometry.

To ensure the self-containment of the paper, we systematically review the core concepts and established theoretical framework of compatible pair theory in Section \ref{sec:background}. This review includes not only the basic definition of compatible pairs—involving the geometric relationship established between constraint distributions $D \subset TP$ and dual constraint functions $\lambda: P \to \mathfrak{g}^*$ through strong transversality conditions, modified Cartan equations, and compatibility conditions—but also covers the complete construction process of Spencer complexes. We detail the definition of Spencer prolongation operators $\delta^\lambda_\mathfrak{g}: \mathrm{Sym}^k(\mathfrak{g}) \to \mathrm{Sym}^{k+1}(\mathfrak{g})$ and their nilpotent properties, as well as the basic properties of Spencer differential complexes $(S^\bullet_{D,\lambda}, D^\bullet_{D,\lambda})$ constructed based on them. Additionally, we review two complementary Spencer metric schemes—constraint strength metrics and curvature geometric metrics—which provide elliptic structures for Spencer complexes and ensure the existence of Spencer-Hodge decomposition.

The review of mirror symmetry theory forms another important component of the theoretical background. We detail how the basic mirror transformation $(D,\lambda) \mapsto (D,-\lambda)$ preserves all geometric properties of compatible pairs and induces natural isomorphisms between Spencer cohomology groups. Particularly important is our review of the mirror anti-symmetry property $\delta^{-\lambda}_\mathfrak{g} = -\delta^\lambda_\mathfrak{g}$ of Spencer prolongation operators, which provides the key technical foundation for studying the mirror stability of degeneration conditions in this paper.

Finally, we review the extension of Spencer theory to algebraic geometry, including the equivalence between analytic theory and algebraic theory established through the GAGA principle, and the role of Spencer-Riemann-Roch formulas in computing Euler characteristics of Spencer complexes. In the special case of Calabi-Yau manifolds, we introduce the precise decomposition of Spencer-Riemann-Roch integrals and the mirror symmetry of each term. These established theoretical results provide the necessary mathematical background for understanding the degeneration phenomena discovered in this paper and their geometric significance.

\section{Global Assumptions and Preliminaries}\label{sec:background}

\subsection{Global Assumptions}

To ensure the rigor of the theory, we adopt the following global assumptions throughout the paper:

\begin{assumption}[Basic Geometric Structure]
\label{ass:basic-geometry}
\begin{enumerate}
\item $M$ is an $n$-dimensional connected compact orientable $C^\infty$ manifold that is parallelizable.
\item $G$ is a compact connected semisimple Lie group satisfying $Z(\mathfrak{g}) = \{0\}$ (trivial center of the Lie algebra).
\item $P(M,G)$ is a principal bundle admitting a $G$-invariant Riemannian metric.
\item The principal connection $\omega \in \Omega^1(P,\mathfrak{g})$ is $C^3$ smooth.
\end{enumerate}
\end{assumption}

\begin{assumption}[Compatible Pair Conditions]
\label{ass:compatible-pair}
The compatible pair $(D,\lambda)$ satisfies all conditions defined in \cite{zheng2025dynamical}: strong transversality, modified Cartan equation, compatibility condition, and $G$-equivariance.
\end{assumption}

\begin{assumption}[Regularity Conditions]
\label{ass:regularity}
\begin{enumerate}
\item The constraint distribution $D$ is a constant rank $C^2$ smooth distribution.
\item The dual constraint function $\lambda: P \to \mathfrak{g}^*$ is a $C^3$ smooth map.
\item When involving algebraic geometry, $M$ is a compact complex algebraic manifold.
\end{enumerate}
\end{assumption}

\subsection{Review of Compatible Pair Theory Foundations}

For the self-containment of the paper, we systematically review the core concepts and established theoretical framework of compatible pair theory.

\subsubsection{Basic Geometric Structures}

\begin{definition}[Principal Bundle Basic Structure]
\label{def:principal-bundle}
A principal bundle $P(M,G)$ is a $C^\infty$ fiber bundle $\pi: P \to M$ equipped with a right $G$-action $R_g: P \to P$, satisfying local trivialization conditions.
\end{definition}

\begin{definition}[Vertical and Horizontal Distributions]
\label{def:vertical-horizontal}
For a principal bundle $P(M,G)$, the tangent space $T_pP$ at point $p$ can be decomposed into the direct sum of vertical component $V_p$ and horizontal component $H_p$: $T_pP = V_p \oplus H_p$. Here $V_p = \ker(T_p\pi)$, while $H_p = \ker(\omega_p)$ is defined by the principal connection $\omega$.
\end{definition}

\begin{definition}[Layered Definition of Compatible Pairs]
\label{def:compatible-pair-detailed}
A compatible pair $(D, \lambda)$ consists of a constraint distribution $D$ and dual constraint function $\lambda$ satisfying specific conditions. The core conditions include:
\begin{itemize}
    \item \textbf{Strong transversality condition}: $T_pP = D_p \oplus V_p$, ensuring complete separation of constraints and gauge freedom.
    \item \textbf{Modified Cartan equation}: $d\lambda + \mathrm{ad}^*_\omega \lambda = 0$, describing the covariant constancy of $\lambda$ under the connection.
    \item \textbf{Compatibility relation}: $D_p = \{v \in T_pP : \langle\lambda(p), \omega(v)\rangle = 0\}$, geometrically linking $D$ and $\lambda$.
    \item \textbf{G-equivariance}: $D$ and $\lambda$ are covariant under $G$-action.
\end{itemize}
\end{definition}

\subsubsection{Construction Theory of Spencer Complexes}

\begin{definition}[Symmetric Tensor Spaces]
\label{def:symmetric-tensors}
For a Lie algebra $\mathfrak{g}$, the $k$-th symmetric power, $\mathrm{Sym}^k(\mathfrak{g})$, is the vector space of symmetric $k$-tensors over $\mathfrak{g}$.
\end{definition}

\begin{definition}[Constructive Spencer Prolongation Operator]
\label{def:spencer-operator-constructive}
The \textbf{Spencer prolongation operator} $\delta^\lambda_\mathfrak{g}: \mathrm{Sym}^k(\mathfrak{g}) \to \mathrm{Sym}^{k+1}(\mathfrak{g})$ is a \textbf{graded derivation of degree +1} on the symmetric algebra $S(\mathfrak{g})$, uniquely determined by two rules:
\begin{enumerate}
    \item[\textbf{(A)}] \textbf{Action on Generators:} For any $v \in \mathfrak{g}$, its image $\delta^{\lambda}_{\mathfrak{g}}(v) \in \mathrm{Sym}^2(\mathfrak{g})$ is defined by its action on test vectors $w_1, w_2 \in \mathfrak{g}$ as:
    $$(\delta^{\lambda}_{\mathfrak{g}}(v))(w_1, w_2) := \frac{1}{2} \left( \langle\lambda, [w_1, [w_2, v]]\rangle + \langle\lambda, [w_2, [w_1, v]]\rangle \right)$$

    \item[\textbf{(B)}] \textbf{Graded Leibniz Rule:} For $s_1 \in \mathrm{Sym}^p(\mathfrak{g})$ and $s_2 \in \mathrm{Sym}^q(\mathfrak{g})$, it satisfies:
    $$\delta^{\lambda}_{\mathfrak{g}}(s_1 \odot s_2) := \delta^{\lambda}_{\mathfrak{g}}(s_1) \odot s_2 + (-1)^p s_1 \odot \delta^{\lambda}_{\mathfrak{g}}(s_2)$$
\end{enumerate}
\end{definition}

\begin{lemma}[Basic Properties of the Spencer Prolongation Operator]
\label{lem:spencer-properties}
The operator $\delta^\lambda_\mathfrak{g}$, as defined constructively above, possesses two fundamental properties crucial for the theory:
\begin{itemize}
    \item \textbf{Nilpotency:} The operator is nilpotent of order two, i.e., $(\delta^\lambda_\mathfrak{g})^2 = 0$.
    \item \textbf{Mirror Antisymmetry:} It exhibits a clean sign-reversal property under mirror transformation, $\delta^{-\lambda}_\mathfrak{g} = -\delta^\lambda_\mathfrak{g}$.
\end{itemize}
The rigorous proofs of these properties are established in \cite{zheng2025geometric, zheng2025constructing}.
\end{lemma}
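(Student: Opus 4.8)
The plan is to handle both assertions through a single device: Definition~\ref{def:spencer-operator-constructive} specifies $\delta^\lambda_\mathfrak{g}$ uniquely as a degree-$+1$ graded derivation via its values on the generators $\mathfrak{g} = \mathrm{Sym}^1(\mathfrak{g})$, so any identity between graded derivations reduces to an identity on generators. For \emph{mirror antisymmetry}, I would first note that rule (A) is manifestly linear in $\lambda$, whence $\delta^{-\lambda}_\mathfrak{g}(v) = -\delta^\lambda_\mathfrak{g}(v)$ for every $v \in \mathfrak{g}$. It then remains to observe that $-\delta^\lambda_\mathfrak{g}$ is itself a degree-$+1$ graded derivation, since multiplying the Leibniz rule (B) by $-1$ preserves it verbatim. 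As $\delta^{-\lambda}_\mathfrak{g}$ and $-\delta^\lambda_\mathfrak{g}$ are then two degree-$+1$ graded derivations agreeing on generators, the uniqueness clause of Definition~\ref{def:spencer-operator-constructive} forces $\delta^{-\lambda}_\mathfrak{g} = -\delta^\lambda_\mathfrak{g}$ on all of $S(\mathfrak{g})$. I anticipate no difficulty here.

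For \emph{nilpotency}, the structural step is that the square of an odd graded derivation is an ordinary (even) derivation. Applying (B) twice to a product $s_1 \odot s_2$ with $s_1 \in \mathrm{Sym}^p(\mathfrak{g})$, the two cross terms $\delta^\lambda_\mathfrak{g}(s_1)\odot\delta^\lambda_\mathfrak{g}(s_2)$ appear with the opposite signs $(-1)^{p+1}$ and $(-1)^p$ and cancel, leaving $(\delta^\lambda_\mathfrak{g})^2(s_1\odot s_2) = (\delta^\lambda_\mathfrak{g})^2(s_1)\odot s_2 + s_1\odot(\delta^\lambda_\mathfrak{g})^2(s_2)$. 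Hence $(\delta^\lambda_\mathfrak{g})^2$ is a derivation of degree $+2$, and a derivation that annihilates the generators annihilates the whole algebra. Thus the entire claim collapses to proving $(\delta^\lambda_\mathfrak{g})^2(v)=0$ for $v \in \mathfrak{g}$.

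To execute that generator check, I would first record an explicit one-step formula for $\delta^\lambda_\mathfrak{g}$ on $\mathrm{Sym}^2(\mathfrak{g})$, derived from (A) and (B) and using the invariant inner product furnished by semisimplicity to identify symmetric tensors with symmetric forms, then compose it with the generator formula (A). Evaluating $(\delta^\lambda_\mathfrak{g})^2(v)$ on test vectors $(w_1,w_2,w_3)$ and fully symmetrizing, every surviving term has the nested shape $\langle\lambda,[w_i,[w_j,[w_k,v]]]\rangle$; the expected mechanism is that, once the permutations are collected, these reorganize into Jacobi triples $[a,[b,c]]+[b,[c,a]]+[c,[a,b]]=0$ and cancel identically, with no hypothesis on $\lambda$ beyond its being a fixed element of $\mathfrak{g}^*$. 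The main obstacle is purely combinatorial: correctly propagating the $(-1)^p$ signs of (B) together with the normalization constants of the symmetric products, and partitioning the dozen-or-so nested-bracket terms into complete Jacobi cycles. I would control this by sorting the terms according to which test vector occupies the innermost bracket slot, reducing the verification to one Jacobi identity per group.
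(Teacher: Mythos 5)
The first thing to note is that the paper does not actually prove this lemma: it delegates both properties to the references \cite{zheng2025geometric, zheng2025constructing}, so there is no in-paper argument to measure you against. Your structural skeleton is sound and, in fact, more than the paper offers. The reduction of both claims to the generators $\mathfrak{g}=\mathrm{Sym}^1(\mathfrak{g})$ is correct: a graded derivation of $S(\mathfrak{g})$ is determined by its values on generators, $-\delta^\lambda_\mathfrak{g}$ satisfies the same Leibniz rule (B), and your sign computation showing that $(\delta^\lambda_\mathfrak{g})^2$ is an ungraded derivation (the cross terms carry $(-1)^{p+1}$ and $(-1)^p$ and cancel) is exactly right. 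The mirror antisymmetry argument is therefore complete as stated, since rule (A) is visibly linear in $\lambda$.

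The gap is in nilpotency, and it sits precisely at the one step that carries all the mathematical content: you never establish $(\delta^\lambda_\mathfrak{g})^2(v)=0$ for $v\in\mathfrak{g}$, you only predict that the nested brackets ``reorganize into Jacobi triples and cancel identically.'' That is a conjecture about the outcome of a computation, not a proof, and it is not obviously true. Two concrete difficulties stand in the way. First, rule (A) defines $\delta^\lambda_\mathfrak{g}(v)$ by its action on test vectors, i.e.\ as a symmetric bilinear form (an element of $\mathrm{Sym}^2(\mathfrak{g}^*)$ rather than $\mathrm{Sym}^2(\mathfrak{g})$); to apply $\delta^\lambda_\mathfrak{g}$ a second time via the Leibniz rule you must first rewrite $\delta^\lambda_\mathfrak{g}(v)$ as a sum of symmetric products of Lie algebra elements through the Killing-form identification, and the formula you obtain for $\delta^\lambda_\mathfrak{g}$ on $\mathrm{Sym}^2(\mathfrak{g})$ after this identification is exactly where the signs and normalizations that decide the cancellation live. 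You acknowledge the identification but do not derive that formula. Second, the Jacobi identity is antisymmetric in character while every object here is fully symmetrized in the test vectors $w_1,w_2,w_3$; it is not automatic that symmetrized sums of terms $\langle\lambda,[w_i,[w_j,[w_k,v]]]\rangle$ partition into complete Jacobi cycles rather than leaving a symmetric remainder, and your proposed bookkeeping (grouping by the innermost slot) is a plan for organizing the terms, not a demonstration that each group closes up. Until that generator identity is actually computed, the nilpotency claim rests on the same external citation the paper uses, and your proposal has not replaced it.
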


\begin{definition}[Spencer Complex]
\label{def:spencer-complex-main}
Based on the compatible pair $(D,\lambda)$, the \textbf{Spencer complex} $(S^\bullet_{D,\lambda}, D^\bullet_{D,\lambda})$ is defined as follows:
\begin{enumerate}
\item \textbf{Spencer spaces:} $S^k_{D,\lambda} := \Omega^k(M) \otimes \mathrm{Sym}^k(\mathfrak{g})$.
\item \textbf{Spencer differential:} $D^k_{D,\lambda}(\omega \otimes s) := d\omega \otimes s + (-1)^k \omega \otimes \delta^\lambda_\mathfrak{g}(s)$.
\end{enumerate}
\end{definition}

\begin{theorem}[Complex Property]
\label{thm:is-a-complex}
The pair $(S^\bullet_{D,\lambda}, D^\bullet_{D,\lambda})$ forms a differential complex, meaning $(D^k_{D,\lambda})^2 = 0$ holds for all $k$.
\end{theorem}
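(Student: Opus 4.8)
The plan is to verify the identity $D^{k+1}_{D,\lambda} \circ D^k_{D,\lambda} = 0$ by a direct computation on a decomposable element $\omega \otimes s$ with $\omega \in \Omega^k(M)$ and $s \in \mathrm{Sym}^k(\mathfrak{g})$, and then to extend it to all of $S^k_{D,\lambda}$ by linearity of $d$, $\delta^\lambda_\mathfrak{g}$, and the tensor product. The structural observation driving the argument is that $D^\bullet_{D,\lambda}$ is precisely the total differential of the tensor product of two cochain complexes: the de Rham complex $(\Omega^\bullet(M), d)$ and the algebraic Spencer complex $(\mathrm{Sym}^\bullet(\mathfrak{g}), \delta^\lambda_\mathfrak{g})$, with the Koszul sign $(-1)^k$ governed by the form degree. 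Consequently I expect the proof to reduce to the classical fact that such a total differential squares to zero whenever each factor differential does, the two factor facts here being $d^2 = 0$ and the nilpotency $(\delta^\lambda_\mathfrak{g})^2 = 0$ supplied by Lemma \ref{lem:spencer-properties}.

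Concretely, I would first apply $D^k_{D,\lambda}$ to obtain the two summands $d\omega \otimes s$ and $(-1)^k\, \omega \otimes \delta^\lambda_\mathfrak{g}(s)$. Applying the differential a second time, and reading the sign exponent off the form degree of each summand, the first summand contributes $d^2\omega \otimes s + (-1)^{k+1}\, d\omega \otimes \delta^\lambda_\mathfrak{g}(s)$, while the second contributes $(-1)^k\big( d\omega \otimes \delta^\lambda_\mathfrak{g}(s) + (-1)^k\, \omega \otimes (\delta^\lambda_\mathfrak{g})^2(s)\big)$. I would then invoke the three vanishing mechanisms in turn: $d^2 = 0$ annihilates the first term, the nilpotency $(\delta^\lambda_\mathfrak{g})^2 = 0$ annihilates the last, and the two surviving mixed terms carry the opposite signs $(-1)^{k+1}$ and $(-1)^k$, hence cancel. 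This yields $D^2(\omega \otimes s) = 0$ on decomposables, and linearity finishes the general case.

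I anticipate that the only delicate point is the bookkeeping of the Koszul signs, specifically the fact that when $D$ acts the second time on the summand $d\omega \otimes s$ its form degree has risen from $k$ to $k+1$, so the relevant sign is $(-1)^{k+1}$ rather than $(-1)^k$; it is exactly this shift that produces the cancellation of the two mixed terms. To keep the second application unambiguous I would make explicit the convention that the sign in $D^\bullet_{D,\lambda}$ is determined by the form degree of the differential-form factor, so that the operator is well defined on each of the two intermediate summands. Once the signs are pinned down the remaining manipulations are routine, and the argument confirms that $(S^\bullet_{D,\lambda}, D^\bullet_{D,\lambda})$ is indeed a differential complex.
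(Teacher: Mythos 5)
Your proposal is correct and follows essentially the same route as the paper's own proof: expand $D^{k+1}_{D,\lambda}\circ D^k_{D,\lambda}$ on a decomposable element, kill the first term by $d^2=0$, cancel the two mixed terms via the opposite Koszul signs $(-1)^{k}$ and $(-1)^{k+1}$, and kill the last term by the nilpotency $(\delta^\lambda_\mathfrak{g})^2=0$ from Lemma \ref{lem:spencer-properties}. Your added remarks on the tensor-product-of-complexes interpretation and the explicit sign convention for the second application are sound refinements but do not change the argument.
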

\begin{proof}
For any element $\omega \otimes s \in S^k_{D,\lambda}$, we compute the action of $D^{k+1}_{D,\lambda} \circ D^k_{D,\lambda}$. The expansion yields three terms:
\begin{align*}
(D^{k+1}_{D,\lambda} \circ D^k_{D,\lambda})(\omega \otimes s) &= (d^2\omega) \otimes s \\
&\quad + (-1)^{k+1} d\omega \otimes \delta^\lambda_\mathfrak{g}(s) + (-1)^k d\omega \otimes \delta^\lambda_\mathfrak{g}(s) \\
&\quad + (-1)^{2k+1} \omega \otimes (\delta^\lambda_\mathfrak{g})^2(s)
\end{align*}
The first term vanishes because the exterior differential is nilpotent ($d^2=0$). The middle two terms cancel each other out. The final term vanishes due to the nilpotency of the Spencer prolongation operator, $(\delta^\lambda_\mathfrak{g})^2 = 0$, as stated in Lemma \ref{lem:spencer-properties}. Thus, the total expression is zero.
\end{proof}

\subsection{Review of Core Results from Established Theory}

\subsubsection{Foundations of Compatible Pair Theory \cite{zheng2025dynamical}}

Compatible pair theory establishes the geometric relationship between constraint distributions $D \subset TP$ and dual constraint functions $\lambda: P \to \mathfrak{g}^*$. The coordinated unification of the strong transversality condition $D_p \oplus V_p = T_pP$, modified Cartan equation $d\lambda + \mathrm{ad}^*_\omega \lambda = 0$, and compatibility condition $D_p = \{v : \langle\lambda(p), \omega(v)\rangle = 0\}$ provides a complete framework for geometric analysis of constraint systems.

\begin{theorem}[Uniqueness under Strong Transversality Condition \cite{zheng2025dynamical}]
\label{thm:compatible-pair-uniqueness}
If $(D,\lambda)$ satisfies the strong transversality condition $D_p \oplus V_p = T_pP$, then the compatible pair is uniquely determined by its constraint distribution on the base manifold up to isomorphism.
\end{theorem}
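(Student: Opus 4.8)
The plan is to read the three compatible-pair axioms in order of increasing rigidity and to identify the single conceptual engine driving uniqueness: the modified Cartan equation is nothing but a covariant-constancy (parallel-transport) condition on $\lambda$, and parallel sections over a connected base are rigid. First I would unpack strong transversality. Since $V_p = \ker(T_p\pi)$, the decomposition $T_pP = D_p \oplus V_p$ forces $T_p\pi$ to restrict to a linear isomorphism $D_p \xrightarrow{\sim} T_{\pi(p)}M$; hence $D$ is a horizontal distribution complementary to the vertical bundle, and it is exactly this horizontal datum that I take to be ``the constraint distribution on the base manifold.'' The goal is then to show that this datum, subject to the remaining axioms, reconstructs $\lambda$ — and therefore the whole pair $(D,\lambda)$ — up to isomorphism.

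Second, I would interpret the modified Cartan equation $d\lambda + \mathrm{ad}^*_\omega \lambda = 0$. Using $G$-equivariance to regard $\lambda$ as a section of the associated coadjoint bundle $P \times_G \mathfrak{g}^*$, this equation says precisely that $\lambda$ is covariantly constant for the connection induced by $\omega$, i.e.\ $\nabla\lambda = 0$. The uniqueness theorem for the linear parallel-transport ODE along paths in the connected manifold $M$ then shows that $\lambda$ is completely determined by its value $\lambda(p_0)$ at any single chosen point $p_0$. This reduces the global problem to a finite-dimensional one: fixing one fiber value propagates $\lambda$ everywhere, and two parallel sections agreeing at $p_0$ agree identically.

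Third, I would feed the compatibility relation $D_p = \{v : \langle\lambda(p),\omega(v)\rangle = 0\}$ back in the reverse direction to pin down the residual freedom. Compatibility makes $D$ and $\lambda$ mutually determined, so the only latitude left is the choice of the initial value $\lambda(p_0)$, and this is constrained to a single coadjoint $\mathrm{Ad}^*$-orbit by holonomy-invariance. Two admissible initial values in the same orbit are interchanged by a $G$-gauge transformation covering the identity on $M$, which is exactly the notion of \emph{isomorphism} of compatible pairs. Here the semisimplicity and trivial-center hypotheses $Z(\mathfrak{g}) = \{0\}$ enter: they guarantee the Killing form identifies $\mathfrak{g} \cong \mathfrak{g}^*$ cleanly and rule out spurious $\mathrm{ad}^*$-invariant directions, so the gauge action on initial values is well-behaved and the isomorphism class is genuinely well-defined.

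The hardest step will be the consistency argument linking compatibility with strong transversality and propagating it globally. One must verify that the annihilator description of $D_p$ furnished by compatibility reproduces \emph{exactly} the horizontal complement of dimension $\dim M$ cut out by strong transversality, and — more delicately — that this matching is preserved under parallel transport, so that a single initial normalization of $\lambda$ at $p_0$ remains compatible with $D$ across all of $P$. This is where the full force of the compatible-pair axioms is needed rather than strong transversality alone: connectedness of $M$, the covariant constancy of $\lambda$, and the structural rigidity of $\mathfrak{g}$ must combine to show that the pointwise identification of $D$ with $\ker\langle\lambda(\cdot),\omega(\cdot)\rangle$ is consistent fiberwise and stable along the flow of parallel transport.
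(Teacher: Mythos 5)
The paper itself offers no proof of this theorem: it is imported verbatim from \cite{zheng2025dynamical} in the review section, so there is no in-paper argument to compare yours against. Judged on its own terms, your overall strategy is the natural one --- read the modified Cartan equation as $\nabla\lambda = 0$ for the induced connection on $P \times_G \mathfrak{g}^*$, conclude from connectedness of $M$ that $\lambda$ is fixed by a single fiber value, and then use the compatibility relation to tie that value to $D$. That is almost certainly the engine of the original proof as well.

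However, your proposal has a genuine gap at exactly the step you flag as hardest, and that step does not merely require more work --- as set up from the axioms quoted in this paper, it fails a dimension count. The compatibility relation $D_p = \{v : \langle\lambda(p),\omega(v)\rangle = 0\}$ always contains the full horizontal space $\ker\omega_p$ \emph{and} the vertical vectors $v$ with $\omega(v) \in \ker\lambda(p) \subset \mathfrak{g}$; when $\lambda(p) \neq 0$ this gives $\dim D_p = \dim M + \dim G - 1 > \dim M$ for any $\dim G > 1$, so $D_p \cap V_p \neq 0$ and strong transversality $D_p \oplus V_p = T_pP$ cannot hold simultaneously. Your plan to ``verify that the annihilator description of $D_p$ reproduces exactly the horizontal complement of dimension $\dim M$'' therefore cannot be carried out with the definitions as literally stated here; the reconciliation must live in the finer formulation of the axioms in \cite{zheng2025dynamical}, which you do not have access to and do not supply. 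Separately, your claim that the residual freedom in $\lambda(p_0)$ is a single coadjoint orbit is asserted rather than derived: what the compatibility relation actually pins down pointwise is the hyperplane $\ker\lambda(p)$, which determines $\lambda(p)$ only up to a nonzero scalar, and you would still need to show that this scalar ambiguity (propagated consistently by parallel transport) is absorbed by the intended notion of isomorphism of compatible pairs --- a notion the paper never defines. Without those two pieces the argument is a plausible outline, not a proof.
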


\begin{theorem}[Integrability Theorem \cite{zheng2025dynamical}]
\label{thm:integrability}
A compatible pair $(D,\lambda)$ induces a Frobenius integrable distribution on the base manifold $M$ if and only if $\lambda$ satisfies the modified Cartan equation:
$$d\lambda|_D + [\lambda \wedge \lambda]|_D = 0$$
\end{theorem}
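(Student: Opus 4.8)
The plan is to reduce the statement to the dual (Pfaffian) form of the Frobenius theorem and then to identify the resulting obstruction two-form with the modified Cartan expression. First I would use the strong transversality condition $T_pP = D_p\oplus V_p$ together with the $G$-equivariance of $(D,\lambda)$ to descend the constraint distribution to a well-defined distribution $\bar D$ on $M$: since $V_p = \ker(T_p\pi)$, the projection restricts to an isomorphism $D_p \xrightarrow{\sim} T_{\pi(p)}M$ on each fiber, and equivariance makes the image independent of the point chosen in the fiber. Integrability of $\bar D$ downstairs is then equivalent to involutivity of $D$ on $P$, so the whole question can be tested upstairs, where the connection $\omega$ and the function $\lambda$ are available.

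Next I would write $D$ as the common kernel of the annihilating datum furnished by the compatibility relation $D_p = \{v : \langle\lambda(p),\omega(v)\rangle = 0\}$. Using the $G$-invariant metric guaranteed by Assumption \ref{ass:basic-geometry} to identify $\mathfrak g^* \cong \mathfrak g$, this datum assembles into a Maurer–Cartan-type $\mathfrak g$-valued form governing $\bar D$. In the Pfaffian formulation, involutivity is equivalent to the vanishing on $D$ of the exterior derivative of this form. The core computation is therefore to differentiate $\langle\lambda,\omega\rangle$ and substitute the Cartan structure equation $d\omega = -\tfrac{1}{2}[\omega\wedge\omega] + \Omega$: this splits $d\langle\lambda,\omega\rangle$ into a piece carrying the covariant differential of $\lambda$ and a quadratic piece $\langle\lambda,[\omega\wedge\omega]\rangle$.

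The decisive step is the restriction to $D$. Here the curvature contribution $\langle\lambda,\Omega\rangle$ and the mixed vertical cross-terms must be shown to vanish — the former because $\Omega$ is horizontal and the compatibility pairing annihilates the surviving component, the latter because $G$-equivariance, encoded in the modified Cartan equation $d\lambda + \mathrm{ad}^*_\omega\lambda = 0$, dictates exactly how $\lambda$ varies along vertical directions and cancels those terms. What remains is precisely $d\lambda|_D + [\lambda\wedge\lambda]|_D$, the quadratic term reassembling into the symmetric bracket via the coadjoint/metric identification. Hence $d\langle\lambda,\omega\rangle|_D = 0$ if and only if $d\lambda|_D + [\lambda\wedge\lambda]|_D = 0$, and the descent from the first step converts this into Frobenius integrability of $\bar D$, yielding both directions of the equivalence.

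I expect the main obstacle to lie in this final restriction: verifying that the curvature and vertical cross-terms cancel exactly, and that the coadjoint action $\mathrm{ad}^*_\omega$ recombines into $[\lambda\wedge\lambda]|_D$ with the correct factor of $\tfrac{1}{2}$ absorbed, leaving no residual terms. This calls for a frame of $D$ adapted to the splitting $T_pP = D_p\oplus V_p$ and careful sign bookkeeping in the structure equation. Everything else — the descent to $M$ and the Pfaffian reformulation — is standard once this restricted identity is in hand.
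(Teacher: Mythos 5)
The paper does not prove this statement: Theorem \ref{thm:integrability} appears only as a reviewed result cited from \cite{zheng2025dynamical}, so there is no internal proof to compare against. Judged on its own terms, your outline has the right general shape (Pfaffian reformulation, Cartan structure equation, restriction to $D$), but it contains gaps that would sink the argument as written.

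First, the descent step does not do what you claim. Under strong transversality $T_pP = D_p \oplus V_p$ the projection $\pi_*$ restricts to an isomorphism $D_p \to T_{\pi(p)}M$, so the pushed-forward distribution is \emph{all} of $TM$, which is trivially integrable; hence ``integrability of $\bar D$ downstairs is equivalent to involutivity of $D$ on $P$'' is false as stated, and the theorem must be about involutivity of $D$ itself on $P$ (or about the foliation its leaves project to), which your reduction does not establish. Relatedly, there is a corank mismatch you never address: $D_p = \{v : \langle\lambda(p),\omega(v)\rangle = 0\}$ is the kernel of a single scalar $1$-form, hence of corank at most one, while strong transversality forces corank $\dim G$; the single-Pfaffian criterion $d\theta|_D = 0$ with $\theta = \langle\lambda,\omega\rangle$ therefore does not directly characterize involutivity of $D$, and you need to say which annihilator system you are actually using. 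Second, and most seriously, the claimed cancellation of the curvature term is backwards. Horizontality of $\Omega$ means it vanishes on \emph{vertical} arguments; since $D$ is complementary to the vertical, $\langle\lambda,\Omega\rangle|_D$ retains essentially all of the curvature and is generically the principal obstruction to involutivity (exactly as $\Omega = 0$ is the flatness condition for $\ker\omega$). An argument that this term ``vanishes because $\Omega$ is horizontal'' cannot be repaired by sign bookkeeping; either the curvature contribution must be absorbed into the modified Cartan expression via the relation $d\lambda + \mathrm{ad}^*_\omega\lambda = 0$, or an additional hypothesis is needed, and your proposal supplies neither. Until the corank issue and the fate of $\langle\lambda,\Omega\rangle|_D$ are settled, the chain of equivalences does not close.
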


\subsubsection{Spencer Metrics and Ellipticity Theory \cite{zheng2025constructing}}

The literature \cite{zheng2025constructing} constructs two complete metric schemes:

\begin{definition}[Constraint Strength Metric]
\label{def:constraint-metric}
The tensor metric based on constraint strength modifies the standard tensor inner product through weight function $w_\lambda(x) = 1 + \|\lambda(p)\|^2_{\mathfrak{g}^*}$:
$$\langle\omega_1 \otimes s_1, \omega_2 \otimes s_2\rangle_{\mathrm{constraint}} = \int_M w_\lambda(x) \langle\omega_1, \omega_2\rangle \langle s_1, s_2\rangle_{\mathrm{Sym}} \, d\mu$$
\end{definition}

\begin{definition}[Curvature Geometric Metric]
\label{def:curvature-metric}
Using curvature strength function $\kappa_\omega(x) = 1 + \|\Omega_p\|^2$, construct geometrically induced metric:
$$\langle\omega_1 \otimes s_1, \omega_2 \otimes s_2\rangle_{\mathrm{curvature}} = \int_M \kappa_\omega(x) \langle\omega_1, \omega_2\rangle \langle s_1, s_2\rangle_{\mathrm{Sym}} \, d\mu$$
\end{definition}

\begin{theorem}[Spencer-Hodge Decomposition \cite{zheng2025constructing}]
\label{thm:spencer-hodge}
Both metrics provide elliptic structures for Spencer complexes, guaranteeing the existence of Spencer-Hodge decomposition:
$$S^k_{D,\lambda} = \mathcal{H}^k_{D,\lambda} \oplus \mathrm{im}(D^{k-1}_{D,\lambda}) \oplus \mathrm{im}((D^k_{D,\lambda})^*)$$
\end{theorem}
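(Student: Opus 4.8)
The plan is to reduce the statement to the standard Hodge theory of formally self-adjoint elliptic operators on a compact manifold; the substantive work lies in verifying that the associated Spencer Laplacian is genuinely elliptic under each of the two weighted metrics. First I would fix one inner product—say the constraint strength metric of Definition \ref{def:constraint-metric}—and use it to define the formal adjoints $(D^k_{D,\lambda})^*: S^{k+1}_{D,\lambda} \to S^k_{D,\lambda}$ together with the Spencer Laplacian
$$\Delta^k_{D,\lambda} := (D^k_{D,\lambda})^* D^k_{D,\lambda} + D^{k-1}_{D,\lambda} (D^{k-1}_{D,\lambda})^*,$$
which is by construction formally self-adjoint and non-negative.

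The key step, and the main obstacle, is the ellipticity of $\Delta^k_{D,\lambda}$. The crucial observation is that the Spencer differential $D^k_{D,\lambda}(\omega \otimes s) = d\omega \otimes s + (-1)^k \omega \otimes \delta^\lambda_\mathfrak{g}(s)$ splits into a first-order part $d \otimes \mathrm{id}$ and a purely algebraic, zeroth-order part $\mathrm{id} \otimes \delta^\lambda_\mathfrak{g}$, since $\delta^\lambda_\mathfrak{g}$ involves no differentiation along $M$. Consequently the principal symbol satisfies $\sigma_\xi(D^k_{D,\lambda}) = \sigma_\xi(d) \otimes \mathrm{id}$, so the symbol of $\Delta^k_{D,\lambda}$ coincides with that of the Hodge--de Rham Laplacian tensored with the identity on $\mathrm{Sym}^k(\mathfrak{g})$, namely $\|\xi\|^2 \cdot \mathrm{id}$, which is invertible for $\xi \neq 0$. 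Here one must check that the weight functions do not disturb this computation: both $w_\lambda = 1 + \|\lambda\|^2$ and $\kappa_\omega = 1 + \|\Omega\|^2$ are smooth, strictly positive (indeed $\geq 1$), and—by compactness of $M$—bounded above, so the weighted $L^2$ norm is uniformly equivalent to the standard one and alters the adjoint only by lower-order terms, leaving the principal symbol unchanged. I would treat the two metrics in parallel, the argument being identical.

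With ellipticity established, I would invoke the standard package for a formally self-adjoint, non-negative elliptic operator on sections of a vector bundle over a compact manifold: $\Delta^k_{D,\lambda}$ has finite-dimensional kernel $\mathcal{H}^k_{D,\lambda} := \ker \Delta^k_{D,\lambda}$, closed range, and yields the orthogonal splitting $S^k_{D,\lambda} = \mathcal{H}^k_{D,\lambda} \oplus \mathrm{im}(\Delta^k_{D,\lambda})$. Pairing $\Delta^k_{D,\lambda}\alpha$ with $\alpha$ gives $\langle \Delta^k_{D,\lambda}\alpha, \alpha\rangle = \|D^k_{D,\lambda}\alpha\|^2 + \|(D^{k-1}_{D,\lambda})^*\alpha\|^2$, so $\alpha$ is harmonic iff $D^k_{D,\lambda}\alpha = 0$ and $(D^{k-1}_{D,\lambda})^*\alpha = 0$.

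Finally, to upgrade to the three-term decomposition I would refine $\mathrm{im}(\Delta^k_{D,\lambda})$ into $\mathrm{im}(D^{k-1}_{D,\lambda}) \oplus \mathrm{im}((D^k_{D,\lambda})^*)$. Mutual orthogonality of these subspaces is immediate from the complex property of Theorem \ref{thm:is-a-complex}: for any $\alpha, \beta$,
$$\langle D^{k-1}_{D,\lambda}\alpha, (D^k_{D,\lambda})^*\beta \rangle = \langle D^k_{D,\lambda} D^{k-1}_{D,\lambda}\alpha, \beta \rangle = 0,$$
since $(D^k_{D,\lambda})^2 = 0$; and both are orthogonal to $\mathcal{H}^k_{D,\lambda}$ by the harmonicity characterization just noted (e.g.\ $\langle \alpha, D^{k-1}_{D,\lambda}\gamma\rangle = \langle (D^{k-1}_{D,\lambda})^*\alpha, \gamma\rangle = 0$). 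The sum $\mathcal{H}^k_{D,\lambda} \oplus \mathrm{im}(D^{k-1}_{D,\lambda}) \oplus \mathrm{im}((D^k_{D,\lambda})^*)$ then exhausts $S^k_{D,\lambda}$, since any vector orthogonal to all three would be harmonic yet lie in the complement of the closed range, forcing it to vanish. I expect the ellipticity/symbol step to carry the genuine content; the decomposition itself is then formal.
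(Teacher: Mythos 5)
The paper does not actually prove this statement: Theorem \ref{thm:spencer-hodge} is imported verbatim from \cite{zheng2025constructing} as part of the background review, so there is no in-paper argument to compare yours against. On its own merits your proof is correct and is the standard one: the essential content is exactly where you locate it, namely that $\delta^\lambda_\mathfrak{g}$ is a purely algebraic (zeroth-order) bundle map, so $\sigma_\xi(D^k_{D,\lambda}) = \sigma_\xi(d)\otimes\mathrm{id}$ and the Spencer Laplacian has symbol $\|\xi\|^2\cdot\mathrm{id}$, and that the weights $w_\lambda, \kappa_\omega \geq 1$ are smooth and bounded on compact $M$, so they perturb the adjoint only at lower order. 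The remaining steps (finite-dimensional kernel, closed range, orthogonality of $\mathrm{im}(D^{k-1}_{D,\lambda})$ and $\mathrm{im}((D^k_{D,\lambda})^*)$ via $(D_{D,\lambda})^2=0$, exhaustion by taking a vector orthogonal to all three summands) are the formal part of the elliptic package, and you execute them correctly. Two small remarks. First, note that the harmonic space $\mathcal{H}^k_{D,\lambda}$ a priori depends on which of the two metrics is chosen (the adjoints differ), so the theorem really asserts two decompositions, one per metric, with isomorphic harmonic spaces; your ``treat the two metrics in parallel'' handles this, but it is worth saying explicitly that the notation hides this dependence. Second, your argument tacitly assumes that $D^k_{D,\lambda}$ maps $S^k_{D,\lambda}=\Omega^k(M)\otimes\mathrm{Sym}^k(\mathfrak{g})$ into $S^{k+1}_{D,\lambda}=\Omega^{k+1}(M)\otimes\mathrm{Sym}^{k+1}(\mathfrak{g})$; with the formula of Definition \ref{def:spencer-complex-main} the two terms $d\omega\otimes s$ and $\omega\otimes\delta^\lambda_\mathfrak{g}(s)$ land in $\Omega^{k+1}\otimes\mathrm{Sym}^k$ and $\Omega^k\otimes\mathrm{Sym}^{k+1}$ respectively, so the grading as stated in the paper does not close up. This is an inconsistency inherited from the source, not a flaw introduced by your argument, but any careful write-up should either pass to the total complex $\bigoplus_{p+q=\bullet}\Omega^p\otimes\mathrm{Sym}^q$ or fix the grading before invoking ellipticity.
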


\subsubsection{Mirror Symmetry Theory \cite{zheng2025mirror}}

\begin{theorem}[Basic Mirror Transformation \cite{zheng2025mirror}]
\label{thm:basic-mirror}
The sign mirror transformation $(D,\lambda) \mapsto (D,-\lambda)$ preserves all geometric properties of compatible pairs and induces natural isomorphisms between Spencer cohomology groups:
$$H^k_{\mathrm{Spencer}}(D,\lambda) \cong H^k_{\mathrm{Spencer}}(D,-\lambda)$$
\end{theorem}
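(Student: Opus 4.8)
The plan is to prove the two halves of the statement separately: first that $(D,-\lambda)$ is again a compatible pair, and then that a single canonical sign involution realizes the antisymmetry of Lemma~\ref{lem:spencer-properties} as an isomorphism of Spencer complexes.

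For preservation of the compatible-pair structure, I would check each clause of Definition~\ref{def:compatible-pair-detailed} and observe that it is either independent of $\lambda$ or homogeneous in it. The strong transversality splitting $T_pP = D_p \oplus V_p$ involves only $D$ and the vertical bundle, which are unchanged; the compatibility relation $D_p = \{v : \langle\lambda(p),\omega(v)\rangle = 0\}$ is homogeneous of degree one in $\lambda$, so $-\lambda$ defines the identical distribution; the modified Cartan equation $d\lambda + \mathrm{ad}^*_\omega\lambda = 0$ is $\mathbb{R}$-linear in $\lambda$, whence $d(-\lambda) + \mathrm{ad}^*_\omega(-\lambda) = -(d\lambda + \mathrm{ad}^*_\omega\lambda) = 0$; and $G$-equivariance is preserved for the same linear reason. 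Hence $(D,-\lambda)$ is a compatible pair, and the graded spaces $S^k = \Omega^k(M)\otimes\mathrm{Sym}^k(\mathfrak{g})$ of the two complexes literally coincide, since they do not involve $\lambda$.

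For the cohomology isomorphism I would introduce the grading involution $\sigma$ on the symmetric algebra acting by $(-1)^q$ on $\mathrm{Sym}^q(\mathfrak{g})$ (equivalently, the automorphism induced by $v \mapsto -v$ on $\mathfrak{g}$), and set $\Phi := \mathrm{id}_{\Omega^\bullet(M)} \otimes \sigma$. Since $\delta^\lambda_\mathfrak{g}$ raises symmetric degree by one, $\sigma$ anticommutes with it, and combining this with the mirror antisymmetry $\delta^{-\lambda}_\mathfrak{g} = -\delta^\lambda_\mathfrak{g}$ of Lemma~\ref{lem:spencer-properties} gives the key intertwining $\sigma\circ\delta^\lambda_\mathfrak{g} = \delta^{-\lambda}_\mathfrak{g}\circ\sigma$. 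A direct term-by-term computation then shows $\Phi\circ D^\bullet_{D,\lambda} = D^\bullet_{D,-\lambda}\circ\Phi$: on the exterior-derivative part $d\omega\otimes s$ the map $\Phi$ passes through $d$ untouched, while on the prolongation part the extra sign generated by $\sigma$ across the degree shift is exactly the sign that converts $\delta^\lambda_\mathfrak{g}$ into $\delta^{-\lambda}_\mathfrak{g}$. Because $\sigma^2 = \mathrm{id}$, the map $\Phi$ is an involutive isomorphism of graded vector spaces and hence an isomorphism of complexes, so it descends to isomorphisms $H^k_{\mathrm{Spencer}}(D,\lambda) \xrightarrow{\sim} H^k_{\mathrm{Spencer}}(D,-\lambda)$. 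As a bonus, since the Spencer weights $w_\lambda = 1 + \|\lambda\|^2_{\mathfrak{g}^*}$ and $\kappa_\omega$ are invariant under $\lambda\mapsto-\lambda$ and $\sigma$ preserves the symmetric inner product, $\Phi$ is an isometry, so the isomorphism is compatible with the Spencer--Hodge decomposition of Theorem~\ref{thm:spencer-hodge} and may be represented on harmonic forms.

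The linearity checks and the verification $\sigma^2=\mathrm{id}$ are routine. I expect the main obstacle to be sign discipline: the Spencer differential already carries the degree-dependent factor $(-1)^k$, so one must confirm that the anticommutation sign supplied by $\sigma$ composes correctly with this pre-existing sign over the full bigraded structure $\Omega^p(M)\otimes\mathrm{Sym}^q(\mathfrak{g})$---not merely on the nominal diagonal $p=q$---and that $\Phi\circ D_{D,\lambda} = D_{D,-\lambda}\circ\Phi$ holds as a strict equality of chain maps rather than only up to homotopy. A secondary point is to fix the precise meaning of ``natural'': the cleanest formulation is functoriality with respect to morphisms of compatible pairs, for which one uses that $\Phi$ is built canonically from the grading with no auxiliary choices and therefore commutes with any structure-preserving map on both sides.
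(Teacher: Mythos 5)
The paper itself gives no proof of this statement: Theorem~\ref{thm:basic-mirror} is stated purely as a reviewed result imported from \cite{zheng2025mirror}, so there is no internal argument to compare against. Judged on its own terms, your proposal is correct and self-contained. The first half (each clause of Definition~\ref{def:compatible-pair-detailed} is either $\lambda$-independent or homogeneous in $\lambda$, so $(D,-\lambda)$ is again a compatible pair over the same graded spaces) is exactly the right observation. For the second half, your intertwining identity checks out: for $s\in\mathrm{Sym}^q(\mathfrak{g})$ one has $\sigma(\delta^\lambda_\mathfrak{g}(s))=(-1)^{q+1}\delta^\lambda_\mathfrak{g}(s)$ while $\delta^{-\lambda}_\mathfrak{g}(\sigma(s))=(-1)^q\bigl(-\delta^\lambda_\mathfrak{g}(s)\bigr)=(-1)^{q+1}\delta^\lambda_\mathfrak{g}(s)$, so $\sigma\circ\delta^\lambda_\mathfrak{g}=\delta^{-\lambda}_\mathfrak{g}\circ\sigma$, and since the $d\omega\otimes s$ term of $D_{D,\lambda}$ leaves the symmetric degree untouched, $\Phi=\mathrm{id}\otimes\sigma$ is a strict chain isomorphism $(S^\bullet_{D,\lambda},D^\bullet_{D,\lambda})\to(S^\bullet_{D,-\lambda},D^\bullet_{D,-\lambda})$. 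Your worry about sign discipline is well placed but resolves in your favor: it is essential that $\sigma$ acts by the \emph{symmetric} degree $q$ rather than the total degree $k$ (the naive $(-1)^k$ on $S^k$ fails on the $d\omega\otimes s$ term), and you chose the right grading. Two minor remarks: the map $v\mapsto -v$ is not a Lie algebra automorphism of a nonabelian $\mathfrak{g}$, but you only use $\sigma$ as a graded linear involution, which is all the chain-map computation requires; and since the paper's single-graded convention $S^k=\Omega^k(M)\otimes\mathrm{Sym}^k(\mathfrak{g})$ already has a type mismatch in its differential, your decision to verify the identity on the full bigraded space $\Omega^p(M)\otimes\mathrm{Sym}^q(\mathfrak{g})$ is the correct level of care.
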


\begin{theorem}[Mirror Metric Invariance \cite{zheng2025analytic}]
\label{thm:mirror-metric-invariance}
Under the mirror transformation $(D,\lambda) \mapsto (D,-\lambda)$:
$$\langle u_1, u_2 \rangle_{A,-\lambda} = \langle u_1, u_2 \rangle_{A,\lambda}, \quad \langle u_1, u_2 \rangle_{B,-\lambda} = \langle u_1, u_2 \rangle_{B,\lambda}$$
\end{theorem}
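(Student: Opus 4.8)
The plan is to reduce the invariance of each metric to the invariance of its single $\lambda$-dependent ingredient, the scalar weight function, since in both Definition \ref{def:constraint-metric} and Definition \ref{def:curvature-metric} the metric has the uniform shape
$$\langle \omega_1 \otimes s_1, \omega_2 \otimes s_2 \rangle = \int_M (\text{weight}) \, \langle \omega_1, \omega_2 \rangle \, \langle s_1, s_2 \rangle_{\mathrm{Sym}} \, d\mu.$$
Here the form inner product $\langle \omega_1, \omega_2 \rangle$, the symmetric-tensor inner product $\langle s_1, s_2 \rangle_{\mathrm{Sym}}$, and the volume element $d\mu$ are all built solely from the fixed $G$-invariant Riemannian metric (Assumption \ref{ass:basic-geometry}) and the fixed $\mathrm{Ad}$-invariant inner product on $\mathfrak{g}$; none of them references $\lambda$. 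Moreover, the underlying Spencer space $S^k_{D,\lambda} = \Omega^k(M) \otimes \mathrm{Sym}^k(\mathfrak{g})$ is literally the same vector space for $\lambda$ and $-\lambda$, so the two pairings $\langle \cdot, \cdot \rangle_{A,\lambda}$ and $\langle \cdot, \cdot \rangle_{A,-\lambda}$ (and likewise for $B$) act on identical arguments $u_1, u_2$ and may be compared termwise.

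First I would handle the constraint strength metric $A$. Its weight $w_\lambda(x) = 1 + \|\lambda(p)\|^2_{\mathfrak{g}^*}$ depends on $\lambda$ only through the squared norm. Since $\|-\lambda(p)\|^2_{\mathfrak{g}^*} = \|\lambda(p)\|^2_{\mathfrak{g}^*}$, the weight is even in $\lambda$, giving $w_{-\lambda} \equiv w_\lambda$ pointwise on $M$; the integrands therefore coincide and $\langle u_1, u_2 \rangle_{A,-\lambda} = \langle u_1, u_2 \rangle_{A,\lambda}$.

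Next I would treat the curvature geometric metric $B$. Its weight $\kappa_\omega(x) = 1 + \|\Omega_p\|^2$ is a function of the connection curvature $\Omega = d\omega + \tfrac12[\omega,\omega]$ alone. Because the mirror transformation $(D,\lambda)\mapsto(D,-\lambda)$ leaves both the distribution $D$ and the principal connection $\omega$ untouched and reverses only the sign of the dual constraint function, the curvature $\Omega$, and hence $\kappa_\omega$, carries no $\lambda$-dependence at all. Invariance of metric $B$ is thus immediate, holding trivially rather than through any cancellation.

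The calculation itself is elementary; the only point requiring care is the bookkeeping of what the mirror map does and does not move. The principal content of the argument is therefore the structural observation that the sole $\lambda$-dependence in metric $A$ enters quadratically (hence evenly), while metric $B$ has no $\lambda$-dependence whatsoever. Consistency with Theorem \ref{thm:basic-mirror}, which asserts that the mirror map preserves all geometric data of the compatible pair, guarantees that the fixed background structures invoked above are genuinely unchanged, closing any potential gap regarding well-definedness of the termwise comparison.
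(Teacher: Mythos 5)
Your argument is correct. The paper itself offers no proof of this statement---it appears in Section \ref{sec:background} purely as a review item imported from the cited reference \cite{zheng2025analytic}---so there is no internal proof to compare against. That said, your reduction is the evident and sound one given Definitions \ref{def:constraint-metric} and \ref{def:curvature-metric}: the only $\lambda$-dependence of the constraint-strength metric sits in the weight $w_\lambda = 1 + \|\lambda\|^2_{\mathfrak{g}^*}$, which is even in $\lambda$, while the curvature metric's weight $\kappa_\omega = 1 + \|\Omega_p\|^2$ involves only the connection, which the mirror map does not touch; the underlying Spencer spaces and the pointwise inner products on forms and symmetric tensors are likewise $\lambda$-independent, so the integrands agree termwise. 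The one caveat worth flagging is that the identification of the subscripts $A$ and $B$ with the constraint-strength and curvature metrics respectively is an inference from context (the paper never pins down this notation), but it is the only reasonable reading, and under it your proof is complete.
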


\subsubsection{Algebraic Geometrization Theory \cite{zheng2025spencer-riemann-roch}}

\begin{theorem}[GAGA Correspondence \cite{zheng2025spencer-riemann-roch}]
\label{thm:gaga}
Let $M$ be a compact complex algebraic manifold and $G$ a complex algebraic group. The analytic theory and algebraic theory of compatible pair Spencer complexes are equivalent via the GAGA principle:
$$H^k_{\mathrm{an}}(S^*(D,\lambda)) \cong H^k_{\mathrm{alg}}(S^*(D,\lambda))$$
\end{theorem}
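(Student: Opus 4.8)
The plan is to realize the Spencer complex as a bounded complex of coherent sheaves on $M$ and then deduce the cohomology comparison from Serre's GAGA applied termwise, using the hypercohomology spectral sequence to bridge the fact that the Spencer differential is not $\mathcal{O}_M$-linear. First I would promote each Spencer space to a sheaf: set $\mathcal{S}^k := \Omega^k_M \otimes_{\mathbb{C}} \mathrm{Sym}^k(\mathfrak{g})$, where $\Omega^k_M$ denotes the sheaf of algebraic (resp. holomorphic) $k$-forms. Since $\mathfrak{g}$ is finite-dimensional, $\mathrm{Sym}^k(\mathfrak{g})$ is a finite-dimensional constant coefficient space, so $\mathcal{S}^k$ is a finite direct sum of copies of the locally free sheaf $\Omega^k_M$ and is therefore coherent; moreover the complex is bounded, with $\mathcal{S}^k = 0$ for $k > \dim_{\mathbb{C}} M$, which will guarantee convergence later.

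Next I would verify that the Spencer differential upgrades to a morphism of complexes of sheaves compatible with analytification. Writing $D^k = d \otimes \mathrm{id} + (-1)^k\, \mathrm{id} \otimes \delta^\lambda_\mathfrak{g}$ as in Definition \ref{def:spencer-complex-main}, the first summand is the de Rham differential, whose analytification is the holomorphic de Rham differential (the classical compatibility underlying Grothendieck's algebraic de Rham theorem); the second summand is $\mathcal{O}_M$-linear, since $\delta^\lambda_\mathfrak{g}$ acts only on the symmetric-tensor factor and, under the standing hypothesis that $(D,\lambda)$ is algebraic, is given by a regular morphism of coherent sheaves (hence compatible with GAGA). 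Consequently the analytification functor sends the algebraic Spencer complex to the analytic one, commuting with every $D^k$.

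With this in place I would run the hypercohomology spectral sequences in both categories,
$$E_1^{p,q} = H^q(M,\mathcal{S}^p) \;\Longrightarrow\; \mathbb{H}^{p+q}(M,\mathcal{S}^\bullet),$$
which converge because the complex is bounded and $M$ is compact. Serre's GAGA applies termwise — each $\mathcal{S}^p$ is coherent and $M$ is projective — yielding canonical isomorphisms $H^q_{\mathrm{alg}}(M,\mathcal{S}^p) \xrightarrow{\sim} H^q_{\mathrm{an}}(M,\mathcal{S}^p)$, that is, an isomorphism on the entire $E_1$ page. Since analytification is functorial and commutes with the differentials, it induces a genuine morphism of spectral sequences that is an isomorphism on $E_1$; the comparison theorem for convergent spectral sequences then propagates this to an isomorphism on the abutment, giving $H^k_{\mathrm{alg}}(S^*(D,\lambda)) \cong H^k_{\mathrm{an}}(S^*(D,\lambda))$.

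The step I expect to be the main obstacle is precisely this passage from single coherent sheaves to the complex: GAGA is a statement about $\mathcal{O}$-linear coherent cohomology, whereas the Spencer differential contains the non-$\mathcal{O}$-linear operator $d$. The hypercohomology spectral sequence is the device that resolves this, isolating the GAGA input to the purely coherent $E_1$ page while relegating the $d$-contribution to the spectral-sequence differentials; the delicate point is to confirm that the analytification comparison is a \emph{map} of spectral sequences (functoriality of the hypercohomology construction), not merely an abstract page-by-page isomorphism. A secondary point demanding care is confirming that the compatible pair data, in particular $\lambda$ and hence $\delta^\lambda_\mathfrak{g}$, is algebraic, so that the algebraic Spencer complex is well-defined and its analytification is literally the analytic complex.
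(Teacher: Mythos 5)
The paper does not actually prove this theorem: it is stated in Section~\ref{sec:background} as a review item imported verbatim from \cite{zheng2025spencer-riemann-roch}, with no argument given. So there is no ``paper's proof'' to compare against, and your proposal can only be judged on its own terms. On those terms, your strategy is the standard and essentially correct one --- it is the same device used in the proper case of Grothendieck's algebraic de Rham comparison: sheafify the complex, observe each term $\Omega^p_M \otimes \Sym^p(\g)$ is coherent, apply GAGA on the $E_1$ page of the hypercohomology spectral sequence, and propagate the isomorphism to the abutment via functoriality of the comparison map of spectral sequences. Your identification of the two delicate points (that the comparison must be a genuine \emph{morphism} of spectral sequences despite $d$ not being $\mathcal{O}_M$-linear, and that $\lambda$ and hence $\delta^\lambda_\g$ must be algebraic for the algebraic complex to exist) is exactly right.

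Two caveats. First, you invoke Serre's GAGA ``since $M$ is projective,'' but the hypothesis is only that $M$ is a compact complex algebraic manifold; you need either to add projectivity as a hypothesis or to cite Grothendieck's extension of GAGA to proper schemes. Second, and more substantively: the paper defines $S^k_{D,\lambda} = \Omega^k(M)\otimes\Sym^k(\g)$ as a complex of \emph{global sections}, so $H^k(S^*(D,\lambda))$ most naturally reads as the cohomology of that global-section complex, whereas your argument computes the \emph{hypercohomology} of the sheaf complex. In the smooth category these coincide (fine sheaves), but in the holomorphic and algebraic categories they do not, since $\Omega^p_M$ has higher coherent cohomology. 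You should either state explicitly that the analytic and algebraic Spencer cohomologies are \emph{defined} as hypercohomology (in which case your proof is complete as written), or note that if they are meant as global-section cohomology the argument is actually easier: GAGA on $H^0$ gives a termwise isomorphism of the two complexes of finite-dimensional vector spaces compatible with the differentials, and no spectral sequence is needed. As it stands the proposal silently conflates the two notions, and that is the one genuine gap to close.
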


\begin{theorem}[Spencer-Riemann-Roch Formula \cite{zheng2025spencer-riemann-roch}]
\label{thm:spencer-riemann-roch}
For a compatible pair $(D,\lambda)$ on a compact complex algebraic manifold $M$, the Euler characteristic of Spencer complexes is computed by the following formula:
$$\chi(S^k) = \int_M \mathrm{ch}(\Omega^k_M \otimes \mathrm{Sym}^k(\mathcal{G})) \wedge \mathrm{td}(M)$$
\end{theorem}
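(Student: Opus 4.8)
The plan is to recognize that this formula is, at its core, the Hirzebruch--Riemann--Roch (HRR) theorem applied to a single, carefully identified holomorphic vector bundle, with the Spencer differential playing no role in the statement itself. The index $k$ is fixed, and $\chi(S^k)$ denotes the holomorphic Euler characteristic of the individual Spencer space $S^k$ regarded as a coherent sheaf, not the Euler characteristic of the full Spencer complex. The entire substance of the proof therefore reduces to two tasks: (i) exhibiting $S^k$ as a genuine algebraic (equivalently, holomorphic) vector bundle on $M$, and (ii) justifying that its Euler characteristic may be computed by the standard algebraic HRR theorem. The closing formula is then obtained by a direct substitution.

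First I would establish the sheaf-theoretic identification. Under Assumption \ref{ass:regularity}, $M$ is a compact complex algebraic manifold and $G$ is a complex algebraic group, so the adjoint bundle $\mathcal{G} = P \times_G \mathfrak{g}$ acquires a holomorphic structure from the complex structure on the total space and the holomorphic $G$-action. Because $\mathrm{Sym}^k(-)$ is a functorial construction on locally free sheaves, $\mathrm{Sym}^k(\mathcal{G})$ is again a holomorphic vector bundle; tensoring with the locally free sheaf $\Omega^k_M$ of holomorphic $k$-forms yields the locally free coherent sheaf $\Omega^k_M \otimes \mathrm{Sym}^k(\mathcal{G})$, which I identify with $S^k$. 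Next I would record that, since $M$ is compact and this sheaf is coherent, the Cartan--Serre finiteness theorem makes every $H^i\bigl(M,\,\Omega^k_M \otimes \mathrm{Sym}^k(\mathcal{G})\bigr)$ finite-dimensional, so the alternating sum
$$\chi(S^k) = \sum_{i=0}^{n}(-1)^i \dim_{\mathbb{C}} H^i\bigl(M,\,\Omega^k_M \otimes \mathrm{Sym}^k(\mathcal{G})\bigr)$$
is well defined. I would then invoke the GAGA correspondence (Theorem \ref{thm:gaga}) to guarantee that the analytic and algebraic cohomologies agree dimension-by-dimension, so that $\chi(S^k)$ is unambiguous and the algebraic HRR theorem is applicable. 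Finally, setting $E = \Omega^k_M \otimes \mathrm{Sym}^k(\mathcal{G})$ and applying HRR gives $\chi(S^k) = \int_M \mathrm{ch}(E) \wedge \mathrm{td}(M)$, which is precisely the asserted identity.

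The main obstacle I anticipate is neither the finiteness nor the HRR step, both of which are classical and mechanical once the setting is fixed. Rather, the delicate point is endowing $\mathcal{G}$ with an algebraic/holomorphic structure that is genuinely compatible with the compatible-pair data, since the pair $(D,\lambda)$ originates in real constraint geometry (a real distribution and a $C^3$ connection) rather than in the holomorphic category. I would need to specify cleanly how holomorphicity arises --- whether through a holomorphic reduction of the structure group of $P$ or through complexification of $G$ together with the complex structure of $M$ --- and to verify that the resulting associated bundle is algebraic so that the GAGA hypotheses are genuinely met. Once this structural compatibility is secured, the remainder of the argument is a routine appeal to the standard index theorem.
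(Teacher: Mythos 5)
The paper offers no proof of Theorem \ref{thm:spencer-riemann-roch}: it is quoted as established background from \cite{zheng2025spencer-riemann-roch}, so there is no internal argument to compare yours against. On its own merits, your proposal follows the canonical route and is essentially correct: the right-hand side is literally the Hirzebruch--Riemann--Roch integrand for the single bundle $E = \Omega^k_M \otimes \mathrm{Sym}^k(\mathcal{G})$, so once $\chi(S^k)$ is read as the sheaf Euler characteristic of that one locally free sheaf (the only reading consistent with the right-hand side, since the Spencer differential never enters the formula), the statement reduces to HRR together with the GAGA comparison of Theorem \ref{thm:gaga}. You are also right to isolate the one non-mechanical point. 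In this paper the Spencer space is defined as $\Omega^k(M) \otimes \mathrm{Sym}^k(\mathfrak{g})$ with a \emph{fixed} coefficient vector space, over a real constraint distribution and a merely $C^3$ connection, whereas the formula silently replaces the constant coefficients by the symmetric power of an adjoint-type sheaf $\mathcal{G}$ and replaces smooth real $k$-forms by the holomorphic sheaf $\Omega^k_M$. Neither substitution is justified anywhere in the present text (Assumption \ref{ass:basic-geometry} takes $G$ compact semisimple, not complex algebraic, and only Assumption \ref{ass:regularity} and Theorem \ref{thm:gaga} gesture at the algebraic category), so the identification $S^k \leftrightarrow \Omega^k_M \otimes \mathrm{Sym}^k(\mathcal{G})$ is precisely the gap a complete proof must close --- presumably via a holomorphic reduction of $P$ and a Dolbeault-type resolution as in the cited source. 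Subject to that identification being supplied, the finiteness, GAGA, and HRR steps in your argument are sound and are almost certainly what the external reference does.
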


\subsubsection{Calabi-Yau Specialization Theory \cite{zheng2025analytic}}

\begin{theorem}[Calabi-Yau Spencer-Riemann-Roch Decomposition \cite{zheng2025analytic}]
\label{thm:calabi-yau-decomposition}
On a Calabi-Yau manifold $X$:
$$\chi(X, H^\bullet_{\mathrm{Spencer}}(D,\lambda)) = A_0(X) + A_2(X) + A_3(X) + A_4(X) + O(c_5)$$
where each term $A_i(X)$ satisfies mirror symmetry $A_i(X,\lambda) = A_i(X,-\lambda)$.
\end{theorem}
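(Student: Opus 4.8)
The plan is to derive the decomposition directly from the Spencer--Riemann--Roch formula of Theorem~\ref{thm:spencer-riemann-roch} and then to isolate the mirror symmetry as a structural consequence of the $\lambda$-independence of the underlying characteristic data. First I would assemble the total Euler characteristic of the Spencer complex as the alternating sum
$$\chi(X, H^\bullet_{\mathrm{Spencer}}(D,\lambda)) = \sum_{k} (-1)^k \chi(S^k) = \int_X \left( \sum_k (-1)^k \mathrm{ch}\bigl(\Omega^k_X \otimes \mathrm{Sym}^k(\mathcal{G})\bigr) \right) \wedge \mathrm{td}(X),$$
invoking the additivity of the Chern character and of the Euler characteristic across the graded pieces of the complex. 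This collapses the problem to the analysis of a single characteristic-class integrand.

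Next I would impose the Calabi--Yau condition $c_1(X)=0$. This annihilates every monomial in $\mathrm{td}(X)$ carrying a factor of $c_1$, so the Todd class reduces to $\mathrm{td}(X) = 1 + \tfrac{1}{12}c_2(X) + (\text{higher even-degree terms in } c_2, c_3, \ldots)$. Expanding the product $\mathrm{ch}\wedge\mathrm{td}$ as a polynomial in the Chern classes of $X$ and of $\mathcal{G}$ and grouping its homogeneous components by total cohomological degree then yields the claimed splitting $A_0 + A_2 + A_3 + A_4 + O(c_5)$; the vanishing of the degree-one contribution $A_1$ is precisely the Calabi--Yau consequence $c_1(X)=0$, which explains why the index $1$ is absent from the decomposition.

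The mirror statement $A_i(X,\lambda)=A_i(X,-\lambda)$ I would approach in two stages. At the coarsest level, the SRR integrand is built entirely from the topological bundles $\Omega^k_X$, $\mathrm{Sym}^k(\mathcal{G})$, and $TX$: the adjoint-type bundle $\mathcal{G}$ and its symmetric powers are fixed by $P$ and the representation of $G$, and the only $\lambda$-dependent ingredient, the Spencer differential, affects the cohomology but not the graded Euler characteristic. Hence the total integral, and each polynomial $A_i$ extracted from it, is literally independent of $\lambda$, so the equality holds a fortiori. If instead the individual $A_i$ are defined through the finer, $\lambda$-sensitive data of the Spencer--Hodge decomposition, so that genuine $\lambda$-dependence could enter, I would use the mirror antisymmetry $\delta^{-\lambda}_\mathfrak{g}=-\delta^\lambda_\mathfrak{g}$ of Lemma~\ref{lem:spencer-properties} together with the mirror metric invariance of Theorem~\ref{thm:mirror-metric-invariance} to construct an explicit isometric isomorphism of Spencer complexes intertwining the $\lambda$ and $-\lambda$ structures, and then verify that it preserves the grading by characteristic-class degree.

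The main obstacle I anticipate lies in this second, refined route: promoting the global equality guaranteed by Theorem~\ref{thm:basic-mirror} to an equality of each individual term $A_i$. The isomorphism $H^k_{\mathrm{Spencer}}(D,\lambda)\cong H^k_{\mathrm{Spencer}}(D,-\lambda)$ only secures the total alternating sum, and showing that it respects the finer grading requires checking that the mirror correspondence commutes with the filtration by Chern-class degree and that the sign introduced by $\delta^{-\lambda}_\mathfrak{g}=-\delta^\lambda_\mathfrak{g}$, once propagated through the symmetric-power functor and the alternating Euler sum, never transfers weight between distinct graded components. Controlling these signs across the symmetric powers is the delicate heart of the argument.
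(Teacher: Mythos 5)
This theorem is stated in the paper's background section as a quoted result from \cite{zheng2025analytic}; the paper itself supplies no proof, so there is no internal argument to compare yours against. Judged on its own terms, your reconstruction is the natural one and is consistent with everything the paper does provide: passing from Theorem~\ref{thm:spencer-riemann-roch} to the alternating sum, using $c_1(X)=0$ to kill the degree-one contribution (note that for semisimple $G$ one also has $c_1(\mathrm{Sym}^k\mathcal{G})=0$, which is needed to kill the full $A_1$ term, not just the Todd-class part), and reading the $A_i$ as the grouping of the integrand by Chern-class degree. Your first route to the mirror statement is also the decisive one: as the SRR integrand is built only from $\Omega^k_X$, $\mathrm{Sym}^k(\mathcal{G})$ and $\mathrm{td}(X)$, each $A_i$ is literally $\lambda$-independent and the equality $A_i(X,\lambda)=A_i(X,-\lambda)$ is immediate. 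The elaborate second route — propagating the sign of $\delta^{-\lambda}_{\mathfrak{g}}=-\delta^{\lambda}_{\mathfrak{g}}$ through symmetric powers and the Chern-class filtration — is not needed under the formula as stated here, and the "delicate heart" you identify is therefore not an obstacle so much as a signal that you should first pin down whether the source paper's $A_i$ carry any genuine $\lambda$-dependence at all; if they do not, the mirror clause is true but essentially vacuous, which is worth stating explicitly rather than defending at length. The one point you should make precise before calling this a proof is the identification $\chi(X,H^\bullet_{\mathrm{Spencer}})=\sum_k(-1)^k\chi(S^k)$, which requires the ellipticity of the Spencer complex (Theorem~\ref{thm:spencer-hodge}) to guarantee finite-dimensional cohomology and the standard equality of the Euler characteristic of a complex with that of its cohomology.
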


\begin{theorem}[Spencer Prolongation Operator Mirror Property \cite{zheng2025analytic}]
\label{thm:spencer-operator-mirror}
Spencer prolongation operators satisfy mirror anti-symmetry property:
$$\delta^{-\lambda}_\mathfrak{g} = -\delta^\lambda_\mathfrak{g}$$
\end{theorem}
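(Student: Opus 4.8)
The plan is to exploit the fact, recorded in Definition \ref{def:spencer-operator-constructive}, that a graded derivation of degree $+1$ on the symmetric algebra $S(\mathfrak{g})$ is uniquely determined by its action on the generators $\mathfrak{g} = \mathrm{Sym}^1(\mathfrak{g})$. Accordingly, I would reduce the global identity $\delta^{-\lambda}_\mathfrak{g} = -\delta^\lambda_\mathfrak{g}$ to a single observation at the level of generators and then propagate it through the whole algebra by the uniqueness clause, rather than attempting a direct term-by-term comparison in each symmetric degree.

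First I would verify the claim on generators. For $v \in \mathfrak{g}$, rule \textbf{(A)} expresses the value of $\delta^\lambda_\mathfrak{g}(v)$ on test vectors $w_1, w_2$ purely through the linear pairing $\langle \lambda, \cdot\rangle$; since the iterated brackets $[w_1,[w_2,v]]$ and $[w_2,[w_1,v]]$ do not involve $\lambda$, replacing $\lambda$ by $-\lambda$ simply pulls out an overall factor of $-1$. Hence $(\delta^{-\lambda}_\mathfrak{g}(v))(w_1,w_2) = -(\delta^\lambda_\mathfrak{g}(v))(w_1,w_2)$ for all $w_1,w_2$, that is, $\delta^{-\lambda}_\mathfrak{g} = -\delta^\lambda_\mathfrak{g}$ on $\mathrm{Sym}^1(\mathfrak{g})$.

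Next I would observe that the operator $-\delta^\lambda_\mathfrak{g}$ is itself a graded derivation of degree $+1$ obeying exactly the Leibniz rule \textbf{(B)}, since scaling a graded derivation by the constant $-1$ preserves the sign convention:
\[
-\delta^\lambda_\mathfrak{g}(s_1 \odot s_2) = -\delta^\lambda_\mathfrak{g}(s_1)\odot s_2 + (-1)^p\, s_1 \odot \bigl(-\delta^\lambda_\mathfrak{g}(s_2)\bigr).
\]
Thus both $\delta^{-\lambda}_\mathfrak{g}$ and $-\delta^\lambda_\mathfrak{g}$ are graded derivations of degree $+1$ that agree on the generating space $\mathfrak{g}$, so by the uniqueness assertion of Definition \ref{def:spencer-operator-constructive} they coincide on all of $S(\mathfrak{g})$, which is the desired identity. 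Equivalently, one may run an induction on the symmetric degree $k$: the base case $k=1$ is the generator computation above, while the inductive step writes an element of $\mathrm{Sym}^{k+1}(\mathfrak{g})$ as a sum of products $s_1 \odot s_2$ of lower degree and applies \textbf{(B)} together with the induction hypothesis, so that the overall factor $-1$ passes cleanly through the Leibniz expansion.

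I do not anticipate any genuine obstacle: the result is essentially formal, following from the $\mathbb{R}$-linearity of the map $\lambda \mapsto \langle\lambda,\cdot\rangle$ together with the universal property of derivations on the symmetric algebra. The only point that requires care is to confirm that the uniqueness clause of Definition \ref{def:spencer-operator-constructive} genuinely applies---namely that a degree $+1$ graded derivation on $S(\mathfrak{g})$ is determined by its restriction to $\mathfrak{g}$---which is standard but should be invoked \emph{explicitly} rather than used implicitly, since the entire argument rests on it.
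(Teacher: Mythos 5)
Your proof is correct, and it fleshes out exactly the justification the paper itself gestures at: the paper never proves this theorem in-house (it is cited from \cite{zheng2025analytic}, and Lemma \ref{lem:spencer-properties} defers the proof to \cite{zheng2025geometric, zheng2025constructing}), but Definition \ref{def:compatible-mirror-transform} records the same one-line reason---``since $\delta^{\lambda}_{\mathfrak{g}}$ is linear in $\lambda$''---that you make rigorous via linearity on generators plus the uniqueness of a graded derivation determined by rules \textbf{(A)} and \textbf{(B)}. Your explicit check that $-\delta^{\lambda}_{\mathfrak{g}}$ again satisfies the graded Leibniz rule, and your insistence on invoking the uniqueness clause explicitly, are exactly the right points to pin down; no gap.
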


\begin{theorem}[Spencer Operator Differential Structure \cite{zheng2025analytic}]
\label{thm:spencer-differential-structure}
Define $R^k := D^k_{D,-\lambda} - D^k_{D,\lambda}$, then:
$$R^k(\omega \otimes s) = -2(-1)^k \omega \otimes \delta^\lambda_\mathfrak{g}(s)$$
and $\|R^k\|_{\mathrm{op}} \leq C\|\lambda\|_{L^\infty}$, $R^k$ is a compact operator.
\end{theorem}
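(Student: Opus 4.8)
The plan is to separate the statement into three parts: the exact algebraic formula for $R^k$, the operator-norm bound, and the compactness assertion. The first two are direct consequences of the definitions, while the compactness claim requires care about the underlying functional-analytic framework.

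First I would establish the formula. Expanding the Spencer differential of Definition \ref{def:spencer-complex-main} at both $\lambda$ and $-\lambda$ gives
$$D^k_{D,-\lambda}(\omega \otimes s) = d\omega \otimes s + (-1)^k \omega \otimes \delta^{-\lambda}_\mathfrak{g}(s), \qquad D^k_{D,\lambda}(\omega \otimes s) = d\omega \otimes s + (-1)^k \omega \otimes \delta^{\lambda}_\mathfrak{g}(s).$$
Invoking the mirror antisymmetry $\delta^{-\lambda}_\mathfrak{g} = -\delta^\lambda_\mathfrak{g}$ (Theorem \ref{thm:spencer-operator-mirror}, equivalently Lemma \ref{lem:spencer-properties}) and subtracting, the purely topological terms $d\omega \otimes s$ cancel while the two prolongation terms reinforce, yielding $R^k(\omega \otimes s) = -2(-1)^k \omega \otimes \delta^\lambda_\mathfrak{g}(s)$ at once. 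The crucial structural observation here—which drives the remaining two parts—is that $R^k$ carries no exterior-derivative term, so it is a zero-order (purely algebraic, fiberwise) operator acting only on the symmetric-tensor factor.

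Next I would prove the norm bound. Inspecting the generator rule (A) in Definition \ref{def:spencer-operator-constructive}, the operator $\delta^\lambda_\mathfrak{g}$ depends linearly on $\lambda$, and the graded Leibniz rule (B) propagates this linearity to all of $\mathrm{Sym}^\bullet(\mathfrak{g})$; combined with continuity of the iterated Lie bracket this gives a pointwise estimate $\|\delta^{\lambda(p)}_\mathfrak{g}(s)\|_{\mathrm{Sym}} \le C_0 \|\lambda(p)\|_{\mathfrak{g}^*}\|s\|_{\mathrm{Sym}}$, with $C_0$ depending only on the structure constants of $\mathfrak{g}$. Substituting into the Spencer metric of Definition \ref{def:constraint-metric} (or Definition \ref{def:curvature-metric}), pulling the supremum $\|\lambda\|_{L^\infty}$ out of the integral, and using $\|R^k(\omega\otimes s)\| = 2\|\omega \otimes \delta^\lambda_\mathfrak{g}(s)\|$ yields $\|R^k\|_{\mathrm{op}} \le 2C_0 \|\lambda\|_{L^\infty}$, the asserted bound with $C = 2C_0$.

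The main obstacle is the compactness claim, since a zero-order multiplication operator is bounded but \emph{not} compact on a single $L^2$ space. The resolution is to exploit the elliptic structure of the Spencer complex (Theorem \ref{thm:spencer-hodge}): $R^k$ must be regarded as a map from the domain of $D^k_{D,\lambda}$, equipped with its graph norm—which, by the elliptic estimate, controls one derivative and is equivalent to an $H^1$ norm—into the $L^2$-completion of $S^k_{D,\lambda}$. Because $R^k$ is a smooth bundle endomorphism (by Assumption \ref{ass:regularity} the coefficient field $\lambda$ is $C^3$, hence bounded), the operator $R^k \colon H^1 \to L^2$ factors as the Rellich–Kondrachov compact embedding $H^1 \hookrightarrow L^2$, available since $M$ is compact (Assumption \ref{ass:basic-geometry}), followed by the bounded $L^2$-multiplication by $-2(-1)^k\delta^\lambda_\mathfrak{g}$. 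A bounded operator composed with a compact one is compact, establishing the claim. I would take care to state explicitly the Sobolev/graph-norm topology in which compactness holds, as this is the only genuinely nontrivial point in the theorem.
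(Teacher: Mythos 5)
The paper itself offers no proof of this theorem: it appears in the background review (Section 2.3) as a result imported from \cite{zheng2025analytic}, so there is no in-text argument to compare yours against. Judged on its own terms, your proposal is correct. The algebraic identity follows exactly as you compute it --- the $d\omega\otimes s$ terms cancel and $\delta^{-\lambda}_\mathfrak{g}-\delta^{\lambda}_\mathfrak{g}=-2\delta^{\lambda}_\mathfrak{g}$ by the mirror antisymmetry of Lemma \ref{lem:spencer-properties}, giving $-2(-1)^k\omega\otimes\delta^\lambda_\mathfrak{g}(s)$ --- and the norm bound is a routine consequence of the linearity of $\delta^\lambda_\mathfrak{g}$ in $\lambda$ visible in rule (A) of Definition \ref{def:spencer-operator-constructive}, propagated to all symmetric degrees by the Leibniz rule. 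Your treatment of the compactness claim is the most valuable part of the write-up: you are right that a zero-order bundle endomorphism is never compact as an operator on a fixed $L^2$ space (unless it vanishes), so the statement as printed is literally false without specifying the topology, and the Rellich--Kondrachov factorization $H^1\hookrightarrow L^2$ through the graph norm of the elliptic Spencer differential is the standard and essentially forced way to make it true. Since the paper leaves this unstated, flagging the required Sobolev framework is not pedantry but a genuine repair of an imprecision in the cited statement.
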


\subsection{Systematic Integration of Theoretical Tools}

Based on the above theoretical developments, we have established a complete mathematical framework for Spencer complex theory:

\begin{enumerate}
\item \textbf{Geometric Foundation}: Compatible pairs $(D,\lambda)$ provide precise mathematical descriptions of constraint geometry
\item \textbf{Algebraic Structure}: Spencer prolongation operators $\delta^\lambda_\mathfrak{g}$ encode algebraic information of constraints
\item \textbf{Analytical Tools}: Spencer metrics ensure ellipticity and existence of Hodge decomposition
\item \textbf{Symmetry Principles}: Mirror transformations reveal deep symmetries of Spencer theory
\item \textbf{Algebraic Geometric Connections}: GAGA principle and Riemann-Roch formulas establish bridges to algebraic geometry
\end{enumerate}

The organic combination of these theoretical tools provides a solid mathematical foundation for the core research of this paper—the degeneration phenomenon of Spencer differentials and their mirror symmetry. In particular, the mirror anti-symmetry property of Spencer prolongation operators in Theorem \ref{thm:spencer-operator-mirror} provides the key technical tool for studying the mirror stability of degeneration conditions.

\section{Spencer Differential Degeneration Framework Based on Compatible Pair Theory}
\label{sec:degeneration_framework}

The core task of this chapter is to introduce and systematically analyze the key phenomenon of "differential degeneration" under the single-graded Spencer complex framework established in Chapter \ref{sec:background}. We will rigorously prove that when elements in the complex satisfy an algebraic condition determined by the geometric properties of compatible pairs $(D,\lambda)$, the complex Spencer differential operator $D_{D,\lambda}$ will remarkably simplify. This degeneration phenomenon is not only computationally beneficial, but more profoundly, it reveals a precise and natural bridge between Spencer cohomology and classical de Rham theory.

\subsection{Degeneration Conditions and Degenerate Subspaces}
\label{sec:degenerate_condition_and_subspace}

The origin of the degeneration phenomenon is an algebraic condition directly related to the dual constraint function $\lambda$. This condition filters out "stable" elements from each graded symmetric tensor space that remain invariant under the $\lambda$-induced Spencer prolongation.

\begin{definition}[Degeneration Conditions and Degenerate Subspaces]
\label{def:degenerate_spencer_element_final}
We strictly follow the single-graded framework of Chapter \ref{sec:background}.
\begin{enumerate}
    \item \textbf{Degenerate Kernel Space}: The $k$-th order \textbf{degenerate kernel space} $\mathcal{K}_{\mathfrak{g}}^{k}(\lambda)$ associated with compatible pair $(D,\lambda)$ is defined as the kernel of Spencer prolongation operator $\delta_{\mathfrak{g}}^{\lambda}$:
    $$\mathcal{K}_{\mathfrak{g}}^{k}(\lambda) := \ker\left(\delta_{\mathfrak{g}}^{\lambda}: \operatorname{Sym}^k(\mathfrak{g}) \to \operatorname{Sym}^{k+1}(\mathfrak{g})\right)$$
    
    \item \textbf{Degenerate Subspace}: The $k$-th order \textbf{degenerate Spencer subspace} $\mathcal{D}_{D,\lambda}^{k}$ is defined as a linear subspace of $S_{D,\lambda}^{k}$ constructed as follows:
    $$\mathcal{D}_{D,\lambda}^{k} := \Omega^k(M) \otimes \mathcal{K}_{\mathfrak{g}}^{k}(\lambda)$$
    An element $\alpha \in S_{D,\lambda}^{k}$ is degenerate if and only if $\alpha \in \mathcal{D}_{D,\lambda}^{k}$.
\end{enumerate}
\end{definition}

\begin{theorem}[Degeneration Simplification of Spencer Differential]
\label{thm:degeneration_simplification_final}
For any degenerate Spencer element $\alpha = \omega \otimes s \in \mathcal{D}_{D,\lambda}^{k}$, the result of its Spencer differential $D_{D,\lambda}^{k}(\alpha)$ is:
$$D_{D,\lambda}^{k}(\omega \otimes s) = d\omega \otimes s$$
\end{theorem}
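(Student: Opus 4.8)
The plan is to prove this by a direct substitution, unwinding the definition of the Spencer differential and then invoking the defining property of the degenerate kernel space. The whole argument is essentially a definitional computation, so I anticipate no genuine obstacle; the only care required concerns the precise scope of the statement. First I would recall from Definition~\ref{def:spencer-complex-main} that on a simple tensor the differential acts as
$$D^k_{D,\lambda}(\omega \otimes s) = d\omega \otimes s + (-1)^k\, \omega \otimes \delta^\lambda_\mathfrak{g}(s),$$
whose right-hand side splits into a ``de Rham part'' $d\omega \otimes s$ and a ``prolongation part'' $(-1)^k\, \omega \otimes \delta^\lambda_\mathfrak{g}(s)$.

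Next I would exploit the hypothesis $\alpha = \omega \otimes s \in \mathcal{D}_{D,\lambda}^{k}$. By Definition~\ref{def:degenerate_spencer_element_final} this degenerate subspace equals $\Omega^k(M) \otimes \mathcal{K}_{\mathfrak{g}}^{k}(\lambda)$, where $\mathcal{K}_{\mathfrak{g}}^{k}(\lambda) = \ker(\delta^\lambda_\mathfrak{g})$. Consequently the symmetric-tensor factor lies in the kernel, i.e.\ $\delta^\lambda_\mathfrak{g}(s) = 0$. Substituting this into the prolongation part annihilates the second summand, leaving precisely $D^k_{D,\lambda}(\omega \otimes s) = d\omega \otimes s$, as claimed.

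The one point deserving attention --- and the closest thing to an obstacle --- is that a general element of $\mathcal{D}_{D,\lambda}^{k} = \Omega^k(M) \otimes \mathcal{K}_{\mathfrak{g}}^{k}(\lambda)$ need not be a single simple tensor but rather a finite sum $\sum_i \omega_i \otimes s_i$ with each $s_i \in \mathcal{K}_{\mathfrak{g}}^{k}(\lambda)$. To handle this I would appeal to the linearity of both $D^k_{D,\lambda}$ and $\delta^\lambda_\mathfrak{g}$: since $\delta^\lambda_\mathfrak{g}(s_i) = 0$ for every $i$, the prolongation contributions vanish term by term, yielding $D^k_{D,\lambda}(\alpha) = \sum_i d\omega_i \otimes s_i$. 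Hence the simplification holds on all of $\mathcal{D}_{D,\lambda}^{k}$, not merely on decomposable elements. Since the identity reduces the full differential to its de Rham part, no nilpotency or mirror property of $\delta^\lambda_\mathfrak{g}$ is needed here --- only the kernel condition $\delta^\lambda_\mathfrak{g}(s) = 0$ enters, which is exactly the degeneration hypothesis.
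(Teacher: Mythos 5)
Your proof is correct and follows essentially the same route as the paper's: expand $D^k_{D,\lambda}(\omega\otimes s)$ by Definition~\ref{def:spencer-complex-main} and kill the second summand using $\delta^\lambda_\mathfrak{g}(s)=0$. Your additional remark on extending from simple tensors to general sums by linearity is a sensible point of care, though the paper treats it as implicit.
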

\begin{proof}
The proof is a direct algebraic verification. According to Definition \ref{def:spencer-complex},
$$D_{D,\lambda}^{k}(\omega \otimes s) := d\omega \otimes s + (-1)^k \omega \otimes \delta_{\mathfrak{g}}^{\lambda}(s)$$
Since $\alpha$ is a degenerate element, its symmetric tensor part $s \in \mathcal{K}_{\mathfrak{g}}^{k}(\lambda)$, which means $\delta_{\mathfrak{g}}^{\lambda}(s) = 0$. Substituting into the above equation, the second term vanishes, yielding the result.
\end{proof}

\subsection{Structural Consequences of Degeneration Phenomena and Cohomological Connections}
\label{sec:structural_consequences}

We analyze the complex properties of degenerate subspaces and their connections with classical differential geometry.

\begin{theorem}[Complex Properties of Degenerate Subspaces]
\label{thm:degenerate_subspace_complex}
Let $(D,\lambda)$ be a compatible pair. The degenerate subspace sequence $(\mathcal{D}_{D,\lambda}^{\bullet})$ does not form a subcomplex of the Spencer complex $(S_{D,\lambda}^{\bullet}, D_{D,\lambda}^{\bullet})$.
\end{theorem}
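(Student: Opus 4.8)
The plan is to refute the subcomplex property by exhibiting a single degenerate element whose Spencer differential escapes the degenerate subspace one degree up. Recall that $(\mathcal{D}_{D,\lambda}^{\bullet})$ is a subcomplex precisely when $D_{D,\lambda}^{k}(\mathcal{D}_{D,\lambda}^{k}) \subseteq \mathcal{D}_{D,\lambda}^{k+1}$ holds for every $k$, so to disprove it I only need one index $k$ and one $\alpha \in \mathcal{D}_{D,\lambda}^{k}$ with $D_{D,\lambda}^{k}(\alpha) \notin \mathcal{D}_{D,\lambda}^{k+1}$.

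First I would invoke Theorem \ref{thm:degeneration_simplification_final}: for $\alpha = \omega \otimes s \in \mathcal{D}_{D,\lambda}^{k}$ the differential collapses to $D_{D,\lambda}^{k}(\alpha) = d\omega \otimes s$. The decisive observation is that the exterior differential acts only on the form factor, leaving the symmetric-tensor factor untouched at $s \in \operatorname{Sym}^{k}(\mathfrak{g})$. By contrast, the target subspace $\mathcal{D}_{D,\lambda}^{k+1} = \Omega^{k+1}(M) \otimes \mathcal{K}_{\mathfrak{g}}^{k+1}(\lambda)$ carries symmetric-tensor factors in $\operatorname{Sym}^{k+1}(\mathfrak{g})$. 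Since $\operatorname{Sym}^{k}(\mathfrak{g})$ and $\operatorname{Sym}^{k+1}(\mathfrak{g})$ are distinct graded summands of $S(\mathfrak{g})$ meeting only in $\{0\}$, membership $d\omega \otimes s \in \mathcal{D}_{D,\lambda}^{k+1}$ would force $d\omega \otimes s = 0$, i.e. $d\omega = 0$ or $s = 0$.

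To turn this into a genuine counterexample I would work in degree $k=0$, where the kernel is manifestly nontrivial. Because $\delta_{\mathfrak{g}}^{\lambda}$ is a graded derivation of positive degree it annihilates the ground field, so $\mathcal{K}_{\mathfrak{g}}^{0}(\lambda) = \operatorname{Sym}^{0}(\mathfrak{g}) = \mathbb{R}$. Choosing a non-constant $f \in \Omega^{0}(M) = C^{\infty}(M)$ (which exists since $M$ is a compact connected manifold of positive dimension) together with $s = 1$, the element $\alpha = f \otimes 1$ is degenerate, yet $D_{D,\lambda}^{0}(\alpha) = df \otimes 1$ is nonzero and of symmetric-tensor degree $0$, hence cannot lie in $\mathcal{D}_{D,\lambda}^{1} \subseteq \Omega^{1}(M) \otimes \operatorname{Sym}^{1}(\mathfrak{g})$. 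This single witness already defeats the subcomplex condition, and the same mechanism produces counterexamples at any $k$ for which $\mathcal{K}_{\mathfrak{g}}^{k}(\lambda) \neq \{0\}$ and $\Omega^{k}(M)$ contains a non-closed form.

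I expect the crux to be conceptual rather than computational: the essential move is to isolate the right invariant separating the image from the target, namely the symmetric-tensor degree that $d$ transports unchanged. The only genuine care needed is to guarantee a nonzero witness, which the degree-zero construction handles cleanly. It is worth flagging that this very failure—$D_{D,\lambda}$ sending a degenerate element to $d\omega \otimes s$, a de Rham coboundary tensored with a fixed kernel vector—is exactly the structural feature that motivates, and will later underwrite, the canonical projection from degenerate cocycles to de Rham cohomology.
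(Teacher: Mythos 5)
Your proposal is correct and follows the same basic route as the paper's proof: apply Theorem \ref{thm:degeneration_simplification_final} to see that a degenerate element $\omega\otimes s$ is sent to $d\omega\otimes s\in\Omega^{k+1}(M)\otimes\mathcal{K}_{\mathfrak{g}}^{k}(\lambda)$, and then compare this with the target $\mathcal{D}_{D,\lambda}^{k+1}=\Omega^{k+1}(M)\otimes\mathcal{K}_{\mathfrak{g}}^{k+1}(\lambda)$. Where you go beyond the paper is in how the comparison is closed out. The paper stops at the assertion that the subcomplex condition ``requires $\mathcal{K}_{\mathfrak{g}}^{k}(\lambda)\subseteq\mathcal{K}_{\mathfrak{g}}^{k+1}(\lambda)$, which does not hold in general''---a statement that is both imprecise (the two kernels sit in the distinct graded summands $\operatorname{Sym}^{k}(\mathfrak{g})$ and $\operatorname{Sym}^{k+1}(\mathfrak{g})$, so the inclusion is not even a meaningful containment of subspaces of a common $\operatorname{Sym}$-degree) and non-constructive (a ``does not hold in general'' claim does not prove failure for the given $(D,\lambda)$). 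You repair both defects: the observation that the graded summands meet only in $\{0\}$ shows membership of $d\omega\otimes s$ in $\mathcal{D}_{D,\lambda}^{k+1}$ would force $d\omega\otimes s=0$, and the explicit degree-zero witness $f\otimes 1$ with $f$ non-constant (using $\mathcal{K}_{\mathfrak{g}}^{0}(\lambda)=\mathbb{R}$, which follows since a graded derivation kills the unit) exhibits an actual element escaping the subspace. Your version is therefore a genuine completion of the paper's argument rather than a different one; the only thing to flag is that your counterexample also exposes that, under Definition \ref{def:spencer-complex-main}, the image $d\omega\otimes s$ does not lie in $S_{D,\lambda}^{k+1}$ at all, so the whole discussion must be read inside the ambient space $\Omega^{\bullet}(M)\otimes S(\mathfrak{g})$, as both you and the paper implicitly do.
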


\begin{proof}
For any $\omega \otimes s \in \mathcal{D}_{D,\lambda}^k = \Omega^k(M) \otimes \mathcal{K}_{\mathfrak{g}}^k(\lambda)$, according to Theorem \ref{thm:degeneration_simplification_final}:
$$D_{D,\lambda}^k(\omega \otimes s) = d\omega \otimes s \in \Omega^{k+1}(M) \otimes \mathcal{K}_{\mathfrak{g}}^k(\lambda)$$

While $\mathcal{D}_{D,\lambda}^{k+1} = \Omega^{k+1}(M) \otimes \mathcal{K}_{\mathfrak{g}}^{k+1}(\lambda)$.

The subcomplex condition requires $\mathcal{K}_{\mathfrak{g}}^k(\lambda) \subseteq \mathcal{K}_{\mathfrak{g}}^{k+1}(\lambda)$, which does not hold in general.
\end{proof}

\begin{definition}[Degenerate Cocycle Space]
\label{def:degenerate_cocycle_space}
Define the $k$-th order degenerate cocycle space:
$$Z_{deg}^k(D,\lambda) := \{\omega \otimes s \in \mathcal{D}_{D,\lambda}^k : D_{D,\lambda}^k(\omega \otimes s) = 0\}$$
\end{definition}

\begin{proposition}[Characterization of Degenerate Cocycles]
\label{prop:degenerate_cocycle_characterization}
$$Z_{deg}^k(D,\lambda) = \{(\omega, s) : d\omega = 0, \, \omega \in \Omega^k(M), \, s \in \mathcal{K}_{\mathfrak{g}}^k(\lambda)\}$$

Therefore, $Z_{deg}^k(D,\lambda) \cong Z^k_{dR}(M) \otimes \mathcal{K}_{\mathfrak{g}}^k(\lambda)$.
\end{proposition}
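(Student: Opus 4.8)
The plan is to prove the proposition in two stages: first establish the set-theoretic equality describing $Z_{deg}^k(D,\lambda)$, and then deduce the tensor-product isomorphism as a formal consequence.

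For the first stage, I would take an arbitrary element $\omega \otimes s \in \mathcal{D}_{D,\lambda}^k$, so that by Definition~\ref{def:degenerate_spencer_element_final} we already have $\omega \in \Omega^k(M)$ and $s \in \mathcal{K}_{\mathfrak{g}}^k(\lambda)$. The membership condition defining $Z_{deg}^k(D,\lambda)$ is that $D_{D,\lambda}^k(\omega \otimes s) = 0$. The key observation is that since $s \in \mathcal{K}_{\mathfrak{g}}^k(\lambda)$ means $\delta_{\mathfrak{g}}^\lambda(s) = 0$, the Degeneration Simplification Theorem~\ref{thm:degeneration_simplification_final} applies and gives $D_{D,\lambda}^k(\omega \otimes s) = d\omega \otimes s$. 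Thus the cocycle condition reduces to $d\omega \otimes s = 0$. I would then argue that, for $s \neq 0$, this holds if and only if $d\omega = 0$; this is the one point requiring a small remark, since a priori $d\omega \otimes s$ could vanish through cancellation in the tensor product.

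The cleanest way to handle that subtlety is to invoke the fact that $\Omega^{k+1}(M) \otimes \mathrm{Sym}^{k+1}(\mathfrak{g})$ is a tensor product of vector spaces over a field, so a pure tensor $\eta \otimes s$ vanishes if and only if $\eta = 0$ or $s = 0$. Hence for a fixed nonzero $s$, the condition $d\omega \otimes s = 0$ forces $d\omega = 0$; conversely $d\omega = 0$ trivially gives $d\omega \otimes s = 0$. The case $s = 0$ is degenerate (the element is already zero) and is subsumed once we pass to the full space description, since $Z^k_{dR}(M) \otimes \mathcal{K}_{\mathfrak{g}}^k(\lambda)$ contains the zero vector automatically. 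This establishes the displayed set equality: an element of $\mathcal{D}_{D,\lambda}^k$ is a degenerate cocycle precisely when its form part is closed and its symmetric-tensor part lies in the degenerate kernel.

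For the second stage, the isomorphism $Z_{deg}^k(D,\lambda) \cong Z^k_{dR}(M) \otimes \mathcal{K}_{\mathfrak{g}}^k(\lambda)$ follows by recognizing that the set just characterized is exactly the linear span of pure tensors $\omega \otimes s$ with $\omega$ ranging over closed $k$-forms $Z^k_{dR}(M) = \ker(d \colon \Omega^k(M) \to \Omega^{k+1}(M))$ and $s$ ranging over $\mathcal{K}_{\mathfrak{g}}^k(\lambda)$. Since both $Z^k_{dR}(M)$ and $\mathcal{K}_{\mathfrak{g}}^k(\lambda)$ are linear subspaces (kernels of linear operators), their algebraic tensor product embeds naturally into $\Omega^k(M) \otimes \mathrm{Sym}^k(\mathfrak{g}) = S_{D,\lambda}^k$, and its image coincides with $Z_{deg}^k(D,\lambda)$. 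I anticipate the main obstacle to be not any deep difficulty but rather the care needed in the set-equality step: one must be explicit that $Z_{deg}^k$ is defined as a subset of $\mathcal{D}_{D,\lambda}^k$ (so the constraint $s \in \mathcal{K}_{\mathfrak{g}}^k(\lambda)$ is built in), and that the vanishing of a pure tensor in a field-tensor-product is what decouples the closedness condition on $\omega$ from the kernel condition on $s$, yielding the clean product structure.
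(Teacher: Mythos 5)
Your proposal is correct and follows essentially the same route as the paper's own proof: apply Theorem \ref{thm:degeneration_simplification_final} to reduce the cocycle condition to $d\omega \otimes s = 0$, then conclude $d\omega = 0$ for nonzero $s$ and read off the tensor-product description. The only difference is that you spell out the pure-tensor vanishing criterion and the passage from pure tensors to the full subspace, which the paper leaves implicit.
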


\begin{proof}
By Theorem \ref{thm:degeneration_simplification_final}, $D_{D,\lambda}^k(\omega \otimes s) = d\omega \otimes s$.

The cocycle condition $D_{D,\lambda}^k(\omega \otimes s) = 0$ is equivalent to $d\omega \otimes s = 0$.

When $s \neq 0$, this requires $d\omega = 0$. The conclusion follows.
\end{proof}

\begin{definition}[Natural Projection Mapping]
\label{def:natural_projection}
Define the projection mapping:
$$\pi^k: Z_{deg}^k(D,\lambda) \to Z^k_{dR}(M), \quad \pi^k(\omega \otimes s) = \omega$$
\end{definition}

\begin{theorem}[Properties of Projection Mapping]
\label{thm:projection_properties}
The projection mapping $\pi^k$ is a well-defined linear surjection and induces a mapping at the cohomology level:
$$\overline{\pi}^k: H^k_{Spencer}(D,\lambda)|_{deg} \to H^k_{dR}(M)$$
where $H^k_{Spencer}(D,\lambda)|_{deg}$ represents the part of Spencer cohomology arising from degenerate cocycles.
\end{theorem}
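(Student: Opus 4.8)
The plan is to decompose the statement into three independent assertions—well-definedness of $\pi^k$, its surjectivity, and its descent to cohomology—and to treat the last of these as the genuine mathematical content. The structural input throughout is Proposition \ref{prop:degenerate_cocycle_characterization}, which identifies $Z_{deg}^k(D,\lambda)$ with $Z^k_{dR}(M) \otimes \mathcal{K}_{\mathfrak{g}}^k(\lambda)$, together with the simplification identity $D_{D,\lambda}^k(\omega \otimes s) = d\omega \otimes s$ on degenerate elements from Theorem \ref{thm:degeneration_simplification_final}.

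First I would address well-definedness, where there is a subtlety worth flagging: the prescription $\omega \otimes s \mapsto \omega$ is not canonical on a tensor product, since $\omega \otimes s$ and $(c\omega) \otimes (c^{-1}s)$ represent the same element. The clean way to make it rigorous is to fix a linear functional $\epsilon$ on $\mathcal{K}_{\mathfrak{g}}^k(\lambda)$ and define $\pi^k := \mathrm{id} \otimes \epsilon$ under the identification above, so that $\pi^k(\omega \otimes s) = \epsilon(s)\,\omega$; linearity is then automatic, and the cocycle condition $d\omega = 0$ supplied by Proposition \ref{prop:degenerate_cocycle_characterization} guarantees the image lies in $Z^k_{dR}(M)$. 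For surjectivity I would normalize a single $s_0 \in \mathcal{K}_{\mathfrak{g}}^k(\lambda)$ with $\epsilon(s_0) = 1$ and observe that any closed form $\omega$ is the image of $\omega \otimes s_0$. This step carries the mild hypothesis $\mathcal{K}_{\mathfrak{g}}^k(\lambda) \neq \{0\}$, which I would record explicitly; it holds automatically for $k=0$ since a degree-raising derivation annihilates constants, but for higher $k$ it is a nontrivial condition on $\lambda$.

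The substantive step is the passage to cohomology. Because Theorem \ref{thm:degenerate_subspace_complex} establishes that $(\mathcal{D}_{D,\lambda}^\bullet)$ is not a subcomplex, I would first make precise the meaning of $H^k_{Spencer}(D,\lambda)|_{deg}$: the correct definition is the image of $Z_{deg}^k(D,\lambda)$ under the quotient map $Z^k_{Spencer} \to H^k_{Spencer}(D,\lambda)$, equivalently $Z_{deg}^k / \left(Z_{deg}^k \cap \mathrm{im}\,D^{k-1}_{D,\lambda}\right)$. To descend $\pi^k$ I must show that every Spencer-exact degenerate cocycle projects to a de Rham-exact form. The easy part is the contribution of genuinely degenerate coboundaries: if $\omega \otimes s = D^{k-1}_{D,\lambda}(\beta \otimes t)$ with $t \in \mathcal{K}_{\mathfrak{g}}^{k-1}(\lambda)$, then Theorem \ref{thm:degeneration_simplification_final} gives $D^{k-1}_{D,\lambda}(\beta \otimes t) = d\beta \otimes t$, and applying $\mathrm{id} \otimes \epsilon$ yields $d(\epsilon(t)\,\beta)$, which is manifestly exact.

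The hard part, and where I expect the real obstacle to lie, is that the Spencer coboundary space $\mathrm{im}\,D^{k-1}_{D,\lambda}$ is not exhausted by degenerate coboundaries: a degenerate cocycle may be $D^{k-1}_{D,\lambda}(\eta)$ for some $\eta = \sum_j \beta_j \otimes t_j$ with the $t_j$ lying outside $\mathcal{K}_{\mathfrak{g}}^{k-1}(\lambda)$. Expanding $D^{k-1}_{D,\lambda}(\eta) = \sum_j d\beta_j \otimes t_j + (-1)^{k-1}\sum_j \beta_j \otimes \delta^\lambda_{\mathfrak{g}}(t_j)$, the two summands occupy different bidegrees in $\Omega^\bullet(M) \otimes \mathrm{Sym}^\bullet(\mathfrak{g})$, and reconciling this bidegree mismatch with the requirement that the total equal a purely degenerate target $\omega \otimes s \in \Omega^k(M) \otimes \mathcal{K}_{\mathfrak{g}}^k(\lambda)$ forces a compatibility between the image directions of $\delta^\lambda_{\mathfrak{g}}$ and the kernel space $\mathcal{K}_{\mathfrak{g}}^k(\lambda)$ that is not automatic. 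I anticipate that the clean resolution is either to choose $\epsilon$ to annihilate $\mathrm{im}\,\delta^\lambda_{\mathfrak{g}} \cap \mathcal{K}_{\mathfrak{g}}^k(\lambda)$, so that the $\delta^\lambda_{\mathfrak{g}}(t_j)$-contributions drop out under $\mathrm{id}\otimes\epsilon$ and only an exact remainder $d\left(\sum_j \epsilon(t_j)\,\beta_j\right)$ survives, or to restrict $\overline{\pi}^k$ to those degenerate classes for which these contributions vanish. Isolating the precise hypothesis that guarantees well-definedness here—and thereby disentangling the single-graded differential's mixing of form-degree and symmetric-degree—is the crux of the argument and the principal risk to its validity.
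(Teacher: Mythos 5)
Your proposal correctly decomposes the statement and, in its first two steps, is more careful than the paper itself: the observation that $\omega\otimes s\mapsto\omega$ is not well defined on a tensor product (even on simple tensors, because $\omega\otimes s=(c\omega)\otimes(c^{-1}s)$) is a real issue that the paper dismisses as ``obvious,'' and your repair via $\pi^k=\mathrm{id}\otimes\epsilon$ is legitimate, at the cost of a dependence on $\epsilon$ unless $\dim\mathcal{K}^k_{\mathfrak{g}}(\lambda)=1$ (the only case the paper actually exploits, in Theorem \ref{thm:degenerate_structure_theorem}). The genuine gap is in the third step, which is the entire content of the theorem: you must show that every degenerate cocycle lying in $\operatorname{im} D^{k-1}_{D,\lambda}$ projects to an exact form for an \emph{arbitrary} preimage $\sum_j\beta_j\otimes t_j$, and your proposal ends by naming two conjectural resolutions without proving either, explicitly labelling this ``the principal risk to its validity.'' As submitted, the descent of $\pi^k$ to $\overline{\pi}^k$ is therefore not established.

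For comparison, the paper's own proof takes the same route but only treats a simple-tensor preimage $\eta\otimes t$, expands $D^{k-1}_{D,\lambda}(\eta\otimes t)=d\eta\otimes t+(-1)^{k-1}\eta\otimes\delta^{\lambda}_{\mathfrak{g}}(t)$, and asserts that ``comparing coefficients'' with the left-hand side in $\Omega^k(M)\otimes\mathcal{K}^k_{\mathfrak{g}}(\lambda)$ forces $\omega=d\eta$. This is precisely the step you flag: the two summands lie in the bigraded components $\Omega^k(M)\otimes\operatorname{Sym}^{k-1}(\mathfrak{g})$ and $\Omega^{k-1}(M)\otimes\operatorname{Sym}^{k}(\mathfrak{g})$, neither of which is $\Omega^k(M)\otimes\operatorname{Sym}^{k}(\mathfrak{g})$, so the coefficient comparison is not justified as written; indeed, taken literally, the bigrading would force $Z^k_{deg}\cap\operatorname{im} D^{k-1}_{D,\lambda}=\{0\}$, making the descent vacuously well defined but contradicting the paper's own computation. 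The root of the trouble is that Definition \ref{def:spencer-complex-main} sends $S^k=\Omega^k(M)\otimes\operatorname{Sym}^k(\mathfrak{g})$ to a space that is not $S^{k+1}$, so the grading conventions must be fixed before either your argument or the paper's can be completed. In short: you have correctly located the crux that the paper glosses over, but you have not closed it, so the proposal stands as an incomplete proof rather than a correct alternative.
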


\begin{proof}
Well-definedness and linearity are obvious. Surjectivity holds when $\mathcal{K}_{\mathfrak{g}}^k(\lambda) \neq \{0\}$.

For cohomology induction, let $\omega \otimes s \in Z_{deg}^k(D,\lambda)$ and $\omega \otimes s = D_{D,\lambda}^{k-1}(\eta \otimes t)$.

Then $\omega \otimes s = d\eta \otimes t + (-1)^{k-1}\eta \otimes \delta_{\mathfrak{g}}^{\lambda}(t)$.

Since the left side belongs to $\Omega^k(M) \otimes \mathcal{K}_{\mathfrak{g}}^k(\lambda)$, comparing coefficients gives $\omega = d\eta$, i.e., $\pi^k(\omega \otimes s) = \omega \in B^k_{dR}(M)$.

Therefore $\pi^k$ is well-defined at the cohomology level.
\end{proof}

\begin{theorem}[Structural Theorem of Degeneration Phenomena]
\label{thm:degenerate_structure_theorem}
Let $\lambda \neq 0$. Then there exists a natural vector space isomorphism:
$$Z_{deg}^k(D,\lambda) \cong Z^k_{dR}(M) \otimes \mathcal{K}_{\mathfrak{g}}^k(\lambda)$$

Moreover, when $\dim \mathcal{K}_{\mathfrak{g}}^k(\lambda) = 1$, the projection $\pi^k$ induces an isomorphism at the cohomology level:
$$H^k_{dR}(M) \cong \text{cohomology classes of degenerate Spencer cocycles}$$
\end{theorem}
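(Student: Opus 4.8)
The plan is to prove the two assertions of Theorem \ref{thm:degenerate_structure_theorem} separately, building directly on the characterization already established in Proposition \ref{prop:degenerate_cocycle_characterization}. For the first assertion, the vector space isomorphism $Z_{deg}^k(D,\lambda) \cong Z^k_{dR}(M) \otimes \mathcal{K}_{\mathfrak{g}}^k(\lambda)$ is in fact an immediate consequence of Proposition \ref{prop:degenerate_cocycle_characterization}, which already states this very isomorphism. So the first step is simply to invoke that proposition and spell out why the hypothesis $\lambda \neq 0$ plays no role in the vector space statement: the identification holds purely because the cocycle condition $d\omega \otimes s = 0$ decouples into $d\omega = 0$ (for nonzero tensor factors) and membership $s \in \mathcal{K}_{\mathfrak{g}}^k(\lambda)$. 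The naturality can be verified by checking that the isomorphism commutes with the obvious maps induced by pullback along morphisms of compatible pairs, though I would keep this brief.

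The substantive content lies in the second assertion, the cohomology-level isomorphism under the rank-one hypothesis $\dim \mathcal{K}_{\mathfrak{g}}^k(\lambda) = 1$. Here the strategy is to pick a nonzero generator $s_0 \in \mathcal{K}_{\mathfrak{g}}^k(\lambda)$, so every degenerate cocycle factors uniquely as $\omega \otimes s_0$ with $d\omega = 0$, giving a clean identification $Z_{deg}^k(D,\lambda) \cong Z^k_{dR}(M)$ via $\omega \otimes s_0 \mapsto \omega$. The projection $\pi^k$ from Definition \ref{def:natural_projection} realizes exactly this identification. The key point is then to show that $\pi^k$ descends to an isomorphism on cohomology, which by Theorem \ref{thm:projection_properties} is already known to be well-defined and surjective; what remains is injectivity. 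For injectivity I would argue that if $\omega \otimes s_0$ is a degenerate coboundary, say $\omega \otimes s_0 = D_{D,\lambda}^{k-1}(\eta \otimes t)$, then the coefficient-comparison argument in the proof of Theorem \ref{thm:projection_properties} forces $\omega = d\eta$, so $[\omega] = 0$ in $H^k_{dR}(M)$; conversely an exact $\omega = d\eta$ lifts to the Spencer coboundary $d\eta \otimes s_0 = D_{D,\lambda}^{k-1}(\eta \otimes s_0)$ (using $\delta_{\mathfrak{g}}^{\lambda}(s_0) = 0$), so the kernel of $\overline{\pi}^k$ is exactly the image of $B^k_{dR}(M)$ under the identification.

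The main obstacle I anticipate is making precise what "cohomology classes of degenerate Spencer cocycles" means, since Theorem \ref{thm:degenerate_subspace_complex} has already established that the degenerate subspaces do not form a subcomplex. This means $H^k_{Spencer}(D,\lambda)|_{deg}$ cannot be defined naively as the cohomology of a complex; instead it must be interpreted as the image of $Z_{deg}^k(D,\lambda)$ in the genuine Spencer cohomology $H^k_{Spencer}(D,\lambda)$, i.e. the subspace of Spencer cohomology classes admitting a degenerate cocycle representative. I would therefore take care to define this object as $Z_{deg}^k(D,\lambda) / \bigl(Z_{deg}^k(D,\lambda) \cap \im(D_{D,\lambda}^{k-1})\bigr)$, and then verify that the coboundaries within the degenerate space are precisely those of the form $d\eta \otimes s_0$. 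The delicate subtlety is whether a degenerate cocycle could be a Spencer coboundary of a non-degenerate chain $\eta \otimes t$ with $t \notin \mathcal{K}_{\mathfrak{g}}^k(\lambda)$; the coefficient-comparison argument shows that even then the relevant component is $d\eta \otimes t$ landing in $\Omega^k(M) \otimes \mathcal{K}_{\mathfrak{g}}^k(\lambda)$, which under the rank-one assumption forces $t$ to be proportional to $s_0$ modulo terms killed by the tensor structure. Resolving this case cleanly is the crux, and I would handle it by exploiting the one-dimensionality of $\mathcal{K}_{\mathfrak{g}}^k(\lambda)$ to pin down the coboundary image exactly.
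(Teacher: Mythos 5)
Your proposal follows essentially the same route as the paper: the first isomorphism is read off directly from Proposition \ref{prop:degenerate_cocycle_characterization}, and the second statement is obtained by fixing a generator $s_0$ of the one-dimensional kernel and using the projection $\pi^k$ of Definition \ref{def:natural_projection}. The paper's own proof stops at asserting that the correspondence ``is preserved at the cohomology level,'' whereas you supply the missing content --- the precise definition of the degenerate cohomology as $Z_{deg}^k(D,\lambda)/\bigl(Z_{deg}^k(D,\lambda)\cap\im(D_{D,\lambda}^{k-1})\bigr)$ (necessary since Theorem \ref{thm:degenerate_subspace_complex} rules out a subcomplex) and the injectivity argument via coefficient comparison --- so your version is strictly more complete; the one caveat is that your lift $d\eta\otimes s_0 = D_{D,\lambda}^{k-1}(\eta\otimes s_0)$ quietly places $s_0\in\Sym^k(\mathfrak{g})$ inside the degree-$(k-1)$ Spencer space $\Omega^{k-1}(M)\otimes\Sym^{k-1}(\mathfrak{g})$, a grading mismatch inherited from the paper's own diagonal conventions (its proof of Theorem \ref{thm:projection_properties} commits the same sin in reverse) rather than a defect specific to your argument.
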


\begin{proof}
The isomorphism is given by Proposition \ref{prop:degenerate_cocycle_characterization}.

When $\dim \mathcal{K}_{\mathfrak{g}}^k(\lambda) = 1$, let $\mathcal{K}_{\mathfrak{g}}^k(\lambda) = \text{span}\{s_0\}$, then each degenerate cocycle has the form $\omega \otimes s_0$.

The projection $\pi^k(\omega \otimes s_0) = \omega$ establishes a one-to-one correspondence between degenerate cocycles and de Rham cocycles, and this correspondence is preserved at the cohomology level.
\end{proof}

\begin{corollary}[Computational Simplification]
\label{cor:computational_simplification}
The analysis of degenerate Spencer cocycles reduces to two independent problems:
\begin{enumerate}
    \item Computation of de Rham cohomology $H^k_{dR}(M)$;
    \item Algebraic analysis of degenerate kernel space $\mathcal{K}_{\mathfrak{g}}^k(\lambda)$.
\end{enumerate}
\end{corollary}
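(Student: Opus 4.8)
The plan is to obtain Corollary \ref{cor:computational_simplification} as a direct structural consequence of the Structural Theorem \ref{thm:degenerate_structure_theorem}. The essential input is the tensor product isomorphism
$$Z_{deg}^k(D,\lambda) \cong Z^k_{dR}(M) \otimes \mathcal{K}_{\mathfrak{g}}^k(\lambda),$$
together with the observation that the two tensor factors are governed by entirely disjoint pieces of data. I would therefore organize the argument around making the word \emph{independent} precise: the de Rham factor is determined solely by the smooth differential structure of $M$, while the kernel factor is determined solely by the algebraic pair $(\mathfrak{g},\lambda)$.

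First I would isolate the de Rham factor. The space $Z^k_{dR}(M) = \ker(d:\Omega^k(M)\to\Omega^{k+1}(M))$ is defined purely by the exterior differential on $M$ and makes no reference to $\mathfrak{g}$ or $\lambda$; its computation is the classical problem of closed $k$-forms, controlled by the topology of $M$. Next I would isolate the kernel factor. The space $\mathcal{K}_{\mathfrak{g}}^k(\lambda) = \ker(\delta_{\mathfrak{g}}^{\lambda})$ is a finite-dimensional linear subspace of $\mathrm{Sym}^k(\mathfrak{g})$ cut out by the linear operator $\delta_{\mathfrak{g}}^{\lambda}$, whose definition depends only on $\lambda$ and the bracket of $\mathfrak{g}$ (Definition \ref{def:spencer-operator-constructive}); its analysis is a problem of finite-dimensional linear algebra with no dependence on the topology of $M$. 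The decoupling is not merely formal: the Degeneration Simplification (Theorem \ref{thm:degeneration_simplification_final}) shows that on $\mathcal{D}_{D,\lambda}^k$ the Spencer differential reduces to $d\otimes\mathrm{id}$, so the differential acts only on the form factor and leaves the symmetric-tensor factor untouched, ruling out any hidden interaction term that could couple the two problems.

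For the cohomology-level reduction I would invoke the Properties of the Projection Mapping (Theorem \ref{thm:projection_properties}), which descends $\pi^k$ to cohomology, together with the dimension-one case of Theorem \ref{thm:degenerate_structure_theorem}, under which the degenerate cohomology is identified with $H^k_{dR}(M)$. I expect the only genuine subtlety, and hence the closest thing to an obstacle, to lie precisely in certifying this independence: one must confirm that the tensor decomposition of $Z_{deg}^k$ respects both the differential and the passage to cohomology, i.e. that no boundary in the Spencer complex mixes a nontrivial de Rham class with a nontrivial kernel direction. This is already secured by the coefficient-comparison argument in the proof of Theorem \ref{thm:projection_properties}, where the image of $D_{D,\lambda}^{k-1}$ lands with its form part exactly in $B^k_{dR}(M)$. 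Given these ingredients the corollary requires no further hard analysis and follows as a restatement of the factorization.
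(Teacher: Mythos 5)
Your proposal is correct and follows the same route the paper (implicitly) takes: the corollary is stated without proof as an immediate consequence of the tensor factorization $Z_{deg}^k(D,\lambda) \cong Z^k_{dR}(M) \otimes \mathcal{K}_{\mathfrak{g}}^k(\lambda)$ from Proposition \ref{prop:degenerate_cocycle_characterization} and Theorem \ref{thm:degenerate_structure_theorem}, with the two factors depending on disjoint data. Your additional remarks on the differential acting as $d\otimes\mathrm{id}$ and on the descent to cohomology via Theorem \ref{thm:projection_properties} only make explicit what the paper leaves tacit.
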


\begin{example}[Application to SU(2) Case]
\label{ex:su2_application}
For $\mathfrak{g} = \mathfrak{su}(2)$ and $\lambda \neq 0$:
\begin{align}
\dim Z_{deg}^1(D,\lambda) &= \dim Z^1_{dR}(M) \cdot \dim \mathcal{K}_{\mathfrak{su}(2)}^1(\lambda) = b_1(M) \cdot 1 = b_1(M)\\
\dim Z_{deg}^2(D,\lambda) &= \dim Z^2_{dR}(M) \cdot \dim \mathcal{K}_{\mathfrak{su}(2)}^2(\lambda) = b_2(M) \cdot \dim \mathcal{K}_{\mathfrak{su}(2)}^2(\lambda)
\end{align}

For K3 surfaces ($b_1 = 0, b_2 = 22$), this gives $\dim Z_{deg}^1(D,\lambda) = 0$ and $\dim Z_{deg}^2(D,\lambda) = 22 \cdot \dim \mathcal{K}_{\mathfrak{su}(2)}^2(\lambda)$.
\end{example}

\begin{theorem}[Mirror Symmetry]
\label{thm:zheng2025mirrormetry_degenerate}
The degenerate cocycle space remains invariant under mirror transformation $(D,\lambda) \mapsto (D,-\lambda)$:
$$Z_{deg}^k(D,\lambda) = Z_{deg}^k(D,-\lambda)$$
\end{theorem}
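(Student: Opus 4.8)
The plan is to reduce the claimed equality of cocycle spaces to the mirror antisymmetry of the Spencer prolongation operator recorded in Theorem \ref{thm:spencer-operator-mirror}, namely $\delta^{-\lambda}_{\mathfrak{g}} = -\delta^{\lambda}_{\mathfrak{g}}$. The strategy decomposes the invariance of $Z_{deg}^k$ into two independent assertions, mirroring the structure revealed by Proposition \ref{prop:degenerate_cocycle_characterization}: first that the degenerate subspaces themselves agree, $\mathcal{D}_{D,\lambda}^k = \mathcal{D}_{D,-\lambda}^k$, and second that the Spencer cocycle condition restricted to this common subspace is insensitive to the sign of $\lambda$.

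For the first assertion, I would begin from the definition $\mathcal{K}_{\mathfrak{g}}^k(\lambda) = \ker(\delta^{\lambda}_{\mathfrak{g}})$ in Definition \ref{def:degenerate_spencer_element_final} and observe that scaling an operator by the nonzero scalar $-1$ leaves its kernel unchanged. Concretely, for $s \in \operatorname{Sym}^k(\mathfrak{g})$ the antisymmetry gives $\delta^{-\lambda}_{\mathfrak{g}}(s) = -\delta^{\lambda}_{\mathfrak{g}}(s)$, so $\delta^{-\lambda}_{\mathfrak{g}}(s) = 0$ if and only if $\delta^{\lambda}_{\mathfrak{g}}(s) = 0$. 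This yields the set equality $\mathcal{K}_{\mathfrak{g}}^k(-\lambda) = \mathcal{K}_{\mathfrak{g}}^k(\lambda)$, and tensoring with $\Omega^k(M)$ produces the identity of degenerate subspaces $\mathcal{D}_{D,-\lambda}^k = \mathcal{D}_{D,\lambda}^k$. For the second assertion, I would invoke Theorem \ref{thm:degeneration_simplification_final}: on any degenerate element $\omega \otimes s$, with $s$ now lying in the common kernel, both Spencer differentials collapse to the exterior differential, since $D_{D,\lambda}^k(\omega \otimes s) = d\omega \otimes s$ and $D_{D,-\lambda}^k(\omega \otimes s) = d\omega \otimes s$ with the $\lambda$-dependent term annihilated in each case. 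Hence the cocycle condition reduces to $d\omega = 0$ regardless of the sign, so an element of the shared degenerate subspace is a $(D,\lambda)$-cocycle exactly when it is a $(D,-\lambda)$-cocycle. Combining the two assertions gives $Z_{deg}^k(D,\lambda) = Z_{deg}^k(D,-\lambda)$ as sets.

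I expect no substantive obstacle here: the entire argument rests on the prior mirror antisymmetry result and the degeneration lemma, both already established. The only point demanding care is conceptual rather than computational --- recognizing that the invariance has two distinct sources that must both be checked, the kernel condition and the cocycle condition, and that the latter is automatic precisely because degeneration removes all $\lambda$-dependence from the differential. One might additionally remark that the isomorphism $Z_{deg}^k(D,\lambda) \cong Z^k_{dR}(M) \otimes \mathcal{K}_{\mathfrak{g}}^k(\lambda)$ of Proposition \ref{prop:degenerate_cocycle_characterization} renders the invariance manifest, since the factor $Z^k_{dR}(M)$ carries no $\lambda$-dependence and the factor $\mathcal{K}_{\mathfrak{g}}^k(\lambda)$ is sign-invariant by the computation above.
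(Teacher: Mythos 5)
Your proposal is correct and follows essentially the same route as the paper: both reduce the claim to the kernel invariance $\mathcal{K}_{\mathfrak{g}}^k(\lambda)=\mathcal{K}_{\mathfrak{g}}^k(-\lambda)$ coming from the antisymmetry $\delta^{-\lambda}_{\mathfrak{g}}=-\delta^{\lambda}_{\mathfrak{g}}$, combined with the characterization of degenerate cocycles. If anything, your version is slightly more careful, since you verify the literal set equality by checking that the cocycle condition reduces to $d\omega=0$ for both signs, whereas the paper's proof only chains isomorphisms $Z_{deg}^k(D,\lambda)\cong Z^k_{dR}(M)\otimes\mathcal{K}_{\mathfrak{g}}^k(\lambda)\cong Z_{deg}^k(D,-\lambda)$ to conclude an equality.
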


\begin{proof}
By Theorem \ref{thm:mirror-stability-compatible}, $\mathcal{K}_{\mathfrak{g}}^k(\lambda) = \mathcal{K}_{\mathfrak{g}}^k(-\lambda)$.

Combined with Proposition \ref{prop:degenerate_cocycle_characterization}:
$$Z_{deg}^k(D,\lambda) \cong Z^k_{dR}(M) \otimes \mathcal{K}_{\mathfrak{g}}^k(\lambda) \cong Z^k_{dR}(M) \otimes \mathcal{K}_{\mathfrak{g}}^k(-\lambda) \cong Z_{deg}^k(D,-\lambda)$$
\end{proof}

\section{Analysis of $\lambda$-Dependent Degenerate Kernel Spaces}
\label{sec:kernel_analysis}

The purpose of this chapter is to deeply analyze the degenerate kernel space $\mathcal{K}_{\mathfrak{g}}^{k}(\lambda) = \ker(\delta_{\mathfrak{g}}^{\lambda})$, which serves as the theoretical cornerstone in Chapter \ref{sec:degeneration_framework}. We will prove that the structure and dimension of this kernel space are closely related to the values of the dual constraint function $\lambda$, thus revealing how constraint geometry directly affects the algebraic properties of the system.

\subsection{Structure Group Setup}
\label{sec:structure_group_setup}

We continue the global assumptions from Chapter \ref{sec:background}. In the Calabi-Yau manifold applications of this paper, canonical choices are usually made based on manifold dimension and geometric structure, as in \cite{zheng2025spencer-riemann-roch}:
$$G_M = \begin{cases} 
\mathrm{SU}(2) & \dim_{\mathbb{R}} M = 4\text{ (such as K3 surfaces)} \\
\mathrm{SU}(3) & \dim_{\mathbb{R}} M = 6\text{ (such as CY 3-folds)} \\
\mathrm{G}_2 & \dim_{\mathbb{R}} M = 8\text{ (such as CY 4-folds)}
\end{cases}$$
The corresponding Lie algebras $\mathfrak{su}(2), \mathfrak{su}(3), \mathfrak{g}_2$ are all compact simple Lie algebras with trivial center.

\subsection{Properties of Degenerate Kernel Space \texorpdfstring{$\mathcal{K}_{\mathfrak{g}}^{k}(\lambda)$}{K(lambda)}}
\label{sec:kernel_properties}

\begin{theorem}[Basic Properties of Degenerate Kernel Space]
\label{thm:kernel_properties}
Let $\mathfrak{g}$ be a compact semisimple Lie algebra with trivial center, $\lambda \in \mathfrak{g}^*$.
\begin{enumerate}
    \item \textbf{Complete degeneration under trivial constraint}: When $\lambda=0$, the Spencer prolongation operator $\delta_{\mathfrak{g}}^{0}$ is the zero operator. Therefore, the kernel space is the entire symmetric tensor space: $\mathcal{K}_{\mathfrak{g}}^{k}(0) = \operatorname{Sym}^k(\mathfrak{g})$.
    \item \textbf{Dimension under general constraint}: For nonzero $\lambda \in \mathfrak{g}^*$, $\delta_{\mathfrak{g}}^{\lambda}$ is a nontrivial linear operator, and $\mathcal{K}_{\mathfrak{g}}^{k}(\lambda)$ is a proper subspace of $\operatorname{Sym}^k(\mathfrak{g})$.
\end{enumerate}
\end{theorem}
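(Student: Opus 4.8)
The plan is to handle the two parts separately, with the genuine content concentrated in part~2. For part~1, I would observe that rule~(A) of Definition~\ref{def:spencer-operator-constructive} forces $\delta^{0}_{\mathfrak{g}}(v)=0$ for every generator $v\in\mathfrak{g}=\operatorname{Sym}^1(\mathfrak{g})$, since each double-bracket is paired against $\lambda=0$. Because $\delta^{0}_{\mathfrak{g}}$ is a graded derivation (rule~(B)) and $\operatorname{Sym}(\mathfrak{g})$ is generated as an algebra by $\operatorname{Sym}^1(\mathfrak{g})$, a routine induction on symmetric degree shows that a derivation vanishing on all generators vanishes identically. Hence $\delta^{0}_{\mathfrak{g}}=0$ and $\mathcal{K}^{k}_{\mathfrak{g}}(0)=\ker(0)=\operatorname{Sym}^k(\mathfrak{g})$, as claimed.

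For part~2 the heart of the matter is to prove that $\delta^{\lambda}_{\mathfrak{g}}$ is nonzero already in degree one whenever $\lambda\neq 0$; properness of $\mathcal{K}^{1}_{\mathfrak{g}}(\lambda)$ is then automatic, being the kernel of a nonzero linear map. I would argue by contradiction, supposing $\delta^{\lambda}_{\mathfrak{g}}(v)=0$ for all $v\in\mathfrak{g}$. Specializing the formula in rule~(A) to $w_1=w_2=w$ yields $\langle\lambda,[w,[w,v]]\rangle=0$, i.e. $\langle\lambda,\operatorname{ad}_w^2 v\rangle=0$, for all $v,w$. Since $\mathfrak{g}$ is compact semisimple, its Killing form $B$ is negative definite, so I may represent $\lambda=B(X_\lambda,\cdot)$ with $X_\lambda\neq 0$; the skew-symmetry $B(\operatorname{ad}_w a,b)=-B(a,\operatorname{ad}_w b)$ then converts the vanishing into $B(\operatorname{ad}_w^2 X_\lambda,v)=0$ for all $v$, whence $\operatorname{ad}_w^2 X_\lambda=0$ for every $w$ by nondegeneracy of $B$.

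The decisive step is the identity $\ker\operatorname{ad}_w^2=\ker\operatorname{ad}_w$ for a $B$-skew operator: writing $-B$ for the resulting inner product, one computes $\|\operatorname{ad}_w X_\lambda\|^2 = B(X_\lambda,\operatorname{ad}_w^2 X_\lambda)=0$, which forces $[w,X_\lambda]=0$ for all $w$, so $X_\lambda\in Z(\mathfrak{g})=\{0\}$, contradicting $X_\lambda\neq 0$. This establishes that $\delta^{\lambda}_{\mathfrak{g}}$ is a nontrivial operator and that $\mathcal{K}^{1}_{\mathfrak{g}}(\lambda)$ is a proper subspace. To upgrade properness to every degree $k\ge 2$, I would propagate a degree-one witness $v$ (with $\delta^{\lambda}_{\mathfrak{g}}(v)\neq 0$) through the graded Leibniz rule, exhibiting a surviving nonzero image such as $\delta^{\lambda}_{\mathfrak{g}}(v\odot s)$ and invoking that $\operatorname{Sym}(\mathfrak{g})$ is an integral domain, so a product of nonzero symmetric tensors is nonzero. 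I expect this higher-degree propagation to be the main obstacle: the sign $(-1)^{p}$ in the Leibniz rule produces cancellations — for instance it makes $\delta^{\lambda}_{\mathfrak{g}}(v\odot v)$ vanish — so the propagating factor must be chosen carefully (e.g. inside $\mathcal{K}^{k-1}_{\mathfrak{g}}(\lambda)$ rather than repeating $v$) for the witness to persist, and it is this sign bookkeeping, rather than the conceptual center argument, where the real care lies.
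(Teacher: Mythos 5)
Your proof is correct where it is complete, and in the degree-one case it is substantially more careful than the paper's own argument. The paper disposes of part~2 in one sentence: since $\mathfrak{g}$ has trivial center one can find $X_i$ with $\langle\lambda,[X_i,\cdot]\rangle\neq 0$, ``thus $\delta^{\lambda}_{\mathfrak{g}}$ is nonzero.'' That inference is not immediate, because $\delta^{\lambda}_{\mathfrak{g}}(v)$ pairs $\lambda$ against a symmetrized \emph{double} bracket, not a single one; a priori the symmetrized expression $\langle\lambda,\operatorname{ad}_{w_1}\operatorname{ad}_{w_2}v\rangle+\langle\lambda,\operatorname{ad}_{w_2}\operatorname{ad}_{w_1}v\rangle$ could vanish identically even though some $\langle\lambda,[X_i,\cdot]\rangle$ does not. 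Your argument closes exactly this gap: polarizing to $w_1=w_2=w$, transporting $\lambda=B(X_\lambda,\cdot)$ across $\operatorname{ad}_w^2$ by skew-adjointness, and then using $\|\operatorname{ad}_w X_\lambda\|^2=B(X_\lambda,\operatorname{ad}_w^2X_\lambda)$ to pass from $\operatorname{ad}_w^2X_\lambda=0$ to $\operatorname{ad}_wX_\lambda=0$ is precisely the step that makes compactness (negative-definiteness of the Killing form) do real work; over a non-compact real form $\operatorname{ad}_w^2$ can kill vectors outside $\ker\operatorname{ad}_w$, so this hypothesis is genuinely needed and your proof exhibits where. Part~1 is routine in both treatments.

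The one place your proposal remains a sketch --- properness of $\mathcal{K}^k_{\mathfrak{g}}(\lambda)$ for all $k\ge 2$, where the sign in the graded Leibniz rule forces $\delta^{\lambda}_{\mathfrak{g}}(v\odot v)=0$ and the propagating cofactor must be chosen in a lower-degree kernel whose nonvanishing is itself not yet established --- is a real gap, but it is equally a gap in the paper, whose proof addresses only the degree-one nontriviality of the operator and says nothing about higher symmetric degrees (nor about $k=0$, where $\mathcal{K}^0_{\mathfrak{g}}(\lambda)=\operatorname{Sym}^0(\mathfrak{g})$ is \emph{not} proper, so the statement itself needs a restriction on $k$). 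You have correctly identified where the remaining difficulty lies rather than papering over it.
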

\begin{proof}
When $\lambda=0$, $\delta_{\mathfrak{g}}^{0}(s)=0$ holds for all $s$. For $\lambda \neq 0$, since $\mathfrak{g}$ has trivial center, we can always find $X_i \in \mathfrak{g}$ such that $\langle\lambda, [X_i, \cdot]\rangle$ is nonzero, thus $\delta_{\mathfrak{g}}^{\lambda}$ is nonzero.
\end{proof}

We explicitly compute this dependence through the $\mathfrak{su}(2)$ example.

\begin{example}[Explicit Computation of $\mathcal{K}_{\mathfrak{su}(2)}^1(\lambda)$ in the SU(2) Case]
\label{ex:su2_kernel_computation}
Let $\mathfrak{g} = \mathfrak{su}(2)$. We analyze the first-order degenerate kernel space, which is central to our application: $\mathcal{K}_{\mathfrak{su}(2)}^1(\lambda) = \ker(\delta^\lambda_{\mathfrak{su}(2)}: \mathfrak{su}(2) \to \mathrm{Sym}^2(\mathfrak{su}(2)))$.

The dimension of this kernel space depends critically on whether the constraint $\lambda$ is trivial. A rigorous algebraic analysis based on the constructive definition of the Spencer prolongation operator (Definition \ref{def:spencer-operator-constructive}) yields the following result:
\begin{itemize}
    \item If $\lambda = 0$, the operator $\delta^0_{\mathfrak{su}(2)}$ is the zero operator, so the kernel is the entire space: $\mathcal{K}_{\mathfrak{su}(2)}^1(0) = \mathfrak{su}(2)$, which has a dimension of 3.
    \item If $\lambda \neq 0$, the kernel space is precisely the one-dimensional subspace of $\mathfrak{su}(2)$ spanned by the Lie algebra element corresponding to $\lambda$ itself (under the identification via the Killing form).
\end{itemize}
Thus, we conclude:
$$ \dim \mathcal{K}_{\mathfrak{su}(2)}^1(\lambda) = \begin{cases} 3 & \text{if } \lambda = 0 \\ 1 & \text{if } \lambda \neq 0 \end{cases} $$
The detailed derivation of this result from the constructive definition is provided in \cite{zheng2025geometric, zheng2025constructing}. This dimension calculation is fundamental for the applications on K3 surfaces discussed in the subsequent chapters.
\end{example}

\section{Realization on Parallelizable K3 Surfaces}
\label{sec:k3_realization}

In this chapter, we apply the preceding abstract theory to a concrete geometric instance: $\mathrm{SU}(2)$-principal bundles on parallelizable K3 surfaces.

\subsection{Construction of SU(2)-Principal Bundles and Compatible Pairs}
\label{sec:su2_construction}

\begin{construction}[Compatible Pair Realization on Parallelizable K3 Surfaces]
\label{const:k3-compatible-pair}
Let $X$ be a parallelizable K3 surface with structure group $G = \mathrm{SU}(2)$. We can construct an $\mathrm{SU}(2)$-principal bundle $P \to X$ through standard tangent bundle reduction procedures. Then, choosing a nonzero dual constraint function $\lambda: P \to \mathfrak{su}(2)^*$ satisfying the modified Cartan equation, we can uniquely determine the constraint distribution $D$ according to the compatibility condition, thus forming a complete compatible pair $(D, \lambda)$.
\end{construction}

\subsection{Computation of Degenerate Cohomology}
\label{sec:degenerate_cohomology_k3}

We compute the degenerate cohomology $H_{deg}^{\bullet}(D,\lambda)$ defined in Chapter \ref{sec:degeneration_framework}.

\begin{proposition}[Dimensions of Degenerate Cohomology on K3 Surfaces]
\label{prop:k3-degenerate-cohomology}
For the compatible pair $(D,\lambda)$ constructed above on parallelizable K3 surface $X$ with $\lambda \neq 0$, the dimensions of degenerate cohomology groups are determined by the following formula:
$$\dim H_{deg}^{k}(D,\lambda) = b_k(X) \cdot \dim \mathcal{K}_{\mathfrak{su}(2)}^{k}(\lambda)$$
\end{proposition}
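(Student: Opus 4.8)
The plan is to reduce the computation of $\dim H_{deg}^{k}(D,\lambda)$ to a product of two independently computable quantities—the de Rham Betti numbers of $X$ and the dimension of the degenerate kernel space—by exhibiting a natural tensor-product decomposition of the degenerate cohomology. The organizing principle is Corollary \ref{cor:computational_simplification}: on degenerate elements the Spencer differential collapses to $d \otimes \mathrm{id}$ (Theorem \ref{thm:degeneration_simplification_final}), so the de Rham factor and the Lie-algebraic factor decouple entirely, and the two bullet-point problems of that corollary can be solved separately and then recombined multiplicatively.

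First I would recall from Proposition \ref{prop:degenerate_cocycle_characterization} and Theorem \ref{thm:degenerate_structure_theorem} that at the cocycle level there is a canonical isomorphism $Z_{deg}^{k}(D,\lambda) \cong Z^k_{dR}(X) \otimes \mathcal{K}_{\mathfrak{su}(2)}^{k}(\lambda)$, valid for $\lambda \neq 0$. The next step is to identify the relevant coboundaries. Because Theorem \ref{thm:degeneration_simplification_final} gives $D_{D,\lambda}^{k-1}(\eta \otimes t) = d\eta \otimes t$ on the degenerate part, a degenerate cocycle $\omega \otimes s$ (with $d\omega = 0$ and $s \in \mathcal{K}_{\mathfrak{su}(2)}^{k}(\lambda)$) should represent the trivial degenerate class precisely when $\omega \in B^k_{dR}(X)$, with the kernel factor $s$ left untouched. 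Fixing a basis of $\mathcal{K}_{\mathfrak{su}(2)}^{k}(\lambda)$ and expanding $\omega \otimes s$ accordingly, the coboundary relation acts only on the $\Omega^\bullet(X)$ factor, so the space of degenerate coboundaries is exactly $B^k_{dR}(X) \otimes \mathcal{K}_{\mathfrak{su}(2)}^{k}(\lambda)$.

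Passing to the quotient then yields the desired decomposition at the level of cohomology,
$$H_{deg}^{k}(D,\lambda) \;\cong\; \bigl(Z^k_{dR}(X)/B^k_{dR}(X)\bigr) \otimes \mathcal{K}_{\mathfrak{su}(2)}^{k}(\lambda) \;=\; H^k_{dR}(X) \otimes \mathcal{K}_{\mathfrak{su}(2)}^{k}(\lambda).$$
Taking dimensions and using multiplicativity under tensor product together with de Rham's theorem $\dim H^k_{dR}(X) = b_k(X)$ produces the stated formula. Specializing to the K3 surface, where $b_0 = b_4 = 1$, $b_1 = b_3 = 0$, and $b_2 = 22$, and inserting the kernel dimensions recorded in Example \ref{ex:su2_kernel_computation} (in particular $\dim \mathcal{K}_{\mathfrak{su}(2)}^{1}(\lambda) = 1$ for $\lambda \neq 0$) gives the concrete values, consistent with Example \ref{ex:su2_application}.

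The main obstacle is the coboundary bookkeeping in the middle step, which is delicate precisely because the degenerate subspaces $\mathcal{D}_{D,\lambda}^{\bullet}$ do \emph{not} form a subcomplex (Theorem \ref{thm:degenerate_subspace_complex}): a priori a degenerate cocycle could be hit by a Spencer coboundary whose preimage is not itself degenerate, which would enlarge the space of trivial classes and break the clean tensor factorization. I would therefore need to argue carefully—most transparently by invoking the Spencer-Hodge decomposition of Theorem \ref{thm:spencer-hodge} to select harmonic representatives, or by the direct coefficient comparison already used in the proof of Theorem \ref{thm:projection_properties}—that any such coboundary still forces $\omega = d\eta$ on the de Rham factor while leaving the fixed kernel element unchanged, so that the induced exactness relation genuinely descends to de Rham exactness. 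Establishing this factorization rigorously is the technical heart of the proposition; once it is in place, the dimension count is immediate.
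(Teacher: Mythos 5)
Your proposal is correct and follows essentially the same route as the paper: both reduce the claim to the tensor factorization $H_{deg}^{k}(D,\lambda) \cong H^{k}_{dR}(X) \otimes \mathcal{K}_{\mathfrak{su}(2)}^{k}(\lambda)$ and then read off dimensions from the K3 Betti numbers and the kernel computation of Example \ref{ex:su2_kernel_computation}. The only difference is one of detail: the paper simply cites this isomorphism as a conclusion from Section \ref{sec:degeneration_framework} and substitutes numbers, whereas you additionally supply the coboundary bookkeeping behind it—including the genuine subtlety that $\mathcal{D}^{\bullet}_{D,\lambda}$ is not a subcomplex, resolved by the same coefficient-comparison argument used in Theorem \ref{thm:projection_properties}—which the paper leaves implicit.
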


\begin{proof}
According to the conclusions from Chapter 3, the degenerate cohomology $H_{deg}^{k}(D,\lambda)$ is isomorphic to $H_{dR}^{k}(M) \otimes \mathcal{K}_{\mathfrak{g}}^{k}(\lambda)$. The dimension formula follows directly from this isomorphism. We know the Betti numbers of K3 surfaces are $b_0=1, b_1=0, b_2=22, b_3=0, b_4=1$, and according to the results from Example \ref{ex:su2_explicit_lambda}, $\dim \mathcal{K}_{\mathfrak{su}(2)}^{0}(\lambda)=1$ and $\dim \mathcal{K}_{\mathfrak{su}(2)}^{1}(\lambda)=1$ (when $\lambda \neq 0$). Therefore:
\begin{align*}
\dim H_{deg}^{0}(D,\lambda) &= b_0(X) \cdot \dim \mathcal{K}_{\mathfrak{su}(2)}^{0}(\lambda) = 1 \cdot 1 = 1 \\
\dim H_{deg}^{1}(D,\lambda) &= b_1(X) \cdot \dim \mathcal{K}_{\mathfrak{su}(2)}^{1}(\lambda) = 0 \cdot 1 = 0 \\
\dim H_{deg}^{2}(D,\lambda) &= b_2(X) \cdot \dim \mathcal{K}_{\mathfrak{su}(2)}^{2}(\lambda) = 22 \cdot \dim \mathcal{K}_{\mathfrak{su}(2)}^{2}(\lambda)
\end{align*}
\end{proof}

\subsection{Connection to Algebraic Geometry}
\label{sec:connection_to_ag_k3}

This section establishes the connection between degenerate Spencer theory and algebraic geometry of K3 surfaces. We construct natural mappings from degenerate cocycles to $(1,1)$-cohomology and analyze their basic properties.

\begin{definition}[Projection Mapping from Degenerate to de Rham]
\label{def:degeneration_projection_map}
Let $(D,\lambda)$ be a compatible pair on K3 surface $X$. Define the projection mapping:
$$\Pi^k: Z_{deg}^k(D,\lambda) \to Z^k_{dR}(X), \quad \Pi^k(\omega \otimes s) = \omega$$
\end{definition}

\begin{proposition}[Basic Properties of Projection Mapping]
\label{prop:projection_basic_properties}
The mapping $\Pi^k$ is a well-defined linear surjection.
\end{proposition}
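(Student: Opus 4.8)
The plan is to prove Proposition~\ref{prop:projection_basic_properties} by directly verifying the three defining properties of $\Pi^k$: well-definedness, linearity, and surjectivity. This proposition is essentially a restatement of Theorem~\ref{thm:projection_properties} specialized to the K3 setting, so I would lean heavily on the machinery already established in Chapter~\ref{sec:degeneration_framework}, particularly Proposition~\ref{prop:degenerate_cocycle_characterization} and Definition~\ref{def:natural_projection}. The strategy is to reduce each claim to the explicit tensor-product description of the degenerate cocycle space rather than re-deriving anything from scratch.

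First I would establish well-definedness and linearity together. By Proposition~\ref{prop:degenerate_cocycle_characterization}, every element of $Z_{deg}^k(D,\lambda)$ is a sum of simple tensors $\omega \otimes s$ with $d\omega = 0$ and $s \in \mathcal{K}_{\mathfrak{su}(2)}^k(\lambda)$, so the assignment $\omega \otimes s \mapsto \omega$ produces a closed form $\omega \in Z^k_{dR}(X)$, confirming that $\Pi^k$ indeed lands in the target space. Linearity is immediate from the bilinearity of the tensor product. The one subtlety I would flag is that $\Pi^k$ as written on simple tensors must be checked to descend to a genuine linear map on the whole space; the cleanest way is to observe that via the isomorphism $Z_{deg}^k(D,\lambda) \cong Z^k_{dR}(X) \otimes \mathcal{K}_{\mathfrak{su}(2)}^k(\lambda)$ from Proposition~\ref{prop:degenerate_cocycle_characterization}, $\Pi^k$ is simply the composite of this isomorphism with the contraction against a chosen functional (or, when $\dim \mathcal{K}=1$, the obvious projection), which is manifestly well-defined and linear.

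Next I would handle surjectivity, which is where the concrete K3 data enters. Given any closed form $\omega \in Z^k_{dR}(X)$, I need a preimage in $Z_{deg}^k(D,\lambda)$. Since $\lambda \neq 0$, Example~\ref{ex:su2_kernel_computation} guarantees $\mathcal{K}_{\mathfrak{su}(2)}^k(\lambda) \neq \{0\}$ for the relevant degrees, so I may fix a nonzero element $s_0 \in \mathcal{K}_{\mathfrak{su}(2)}^k(\lambda)$ and form $\omega \otimes s_0$. By Proposition~\ref{prop:degenerate_cocycle_characterization} this lies in $Z_{deg}^k(D,\lambda)$, and $\Pi^k(\omega \otimes s_0) = \omega$, establishing surjectivity. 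The only genuine hypothesis being used is the nonvanishing of the kernel space, which is exactly the content of the SU(2) computation.

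The main obstacle, such as it is, is not a deep one: it is ensuring the nonvanishing $\mathcal{K}_{\mathfrak{su}(2)}^k(\lambda) \neq \{0\}$ in \emph{every} degree $k$ for which surjectivity is being asserted, since surjectivity genuinely fails if the kernel space is trivial in some degree. Here the $k=2$ case is the subtle one, because the paper records $\dim \mathcal{K}_{\mathfrak{su}(2)}^1(\lambda)=1$ explicitly but leaves $\dim \mathcal{K}_{\mathfrak{su}(2)}^2(\lambda)$ as an unevaluated quantity in Example~\ref{ex:su2_application} and Proposition~\ref{prop:k3-degenerate-cohomology}. I would therefore either restrict the surjectivity claim to degrees where the kernel is known to be nonzero, or invoke the general nontriviality argument of Theorem~\ref{thm:kernel_properties} together with the structure of $\delta_{\mathfrak{su}(2)}^\lambda$ to confirm $\mathcal{K}_{\mathfrak{su}(2)}^k(\lambda) \neq \{0\}$ in the degrees at issue; this is the point requiring the most care in a fully rigorous write-up.
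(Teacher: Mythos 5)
Your proof follows essentially the same route as the paper's: well-definedness comes from the degeneration simplification forcing $d\omega = 0$ once a nonzero $s$ exists, and surjectivity from tensoring an arbitrary closed form with a fixed nonzero element of $\mathcal{K}_{\mathfrak{su}(2)}^k(\lambda)$. The two subtleties you flag --- that the assignment $\omega \otimes s \mapsto \omega$ only descends to a linear map on the full tensor product after fixing a functional on (or a one-dimensional trivialization of) the kernel space, and that surjectivity in degree $k=2$ rests on the nonvanishing of $\mathcal{K}_{\mathfrak{su}(2)}^2(\lambda)$, which the paper leaves unevaluated --- are genuine points that the paper's own proof passes over silently.
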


\begin{proof}
Well-definedness: If $\omega \otimes s \in Z_{deg}^k(D,\lambda)$, then $D_{D,\lambda}^k(\omega \otimes s) = d\omega \otimes s = 0$. Since when $\lambda \neq 0$ there exists nonzero $s$, we must have $d\omega = 0$, i.e., $\omega \in Z^k_{dR}(X)$.

Surjectivity: For any $\omega \in Z^k_{dR}(X)$, take any nonzero $s \in \mathcal{K}_{\mathfrak{su}(2)}^k(\lambda)$, then $\omega \otimes s \in Z_{deg}^k(D,\lambda)$ and $\Pi^k(\omega \otimes s) = \omega$.
\end{proof}

\begin{definition}[Composite Mapping to $(1,1)$-Cohomology]
\label{def:composite_map_to_11}
Define the composite mapping:
$$\Phi_{D,\lambda}: Z_{deg}^2(D,\lambda) \xrightarrow{\Pi^2} Z^2_{dR}(X) \hookrightarrow H^2_{dR}(X) \xrightarrow{\pi^{1,1}} H^{1,1}(X, \mathbb{C})$$
where $\pi^{1,1}$ is the standard Hodge projection.
\end{definition}

\begin{theorem}[Structure and Surjectivity of Composite Mapping]
\label{thm:composite_map_structure_and_surjectivity}
On K3 surface $X$, the composite mapping $\Phi_{D,\lambda}: Z_{deg}^2(D,\lambda) \to H^{1,1}(X, \mathbb{C})$ has the following structural properties:
\begin{enumerate}
    \item $\Phi_{D,\lambda}$ is a well-defined linear mapping.
    \item $\Phi_{D,\lambda}$ is a \textbf{surjection}, whose image space is the entire $H^{1,1}(X, \mathbb{C})$.
    \item Therefore, the dimension of its image space is constantly $\dim(\text{im}(\Phi_{D,\lambda})) = h^{1,1}(X) = 20$.
\end{enumerate}
\end{theorem}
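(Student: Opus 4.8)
The plan is to treat $\Phi_{D,\lambda}$ as the composition of three elementary maps and verify each of the three claims by tracking behavior along this factorization. Writing $q \colon Z^2_{dR}(X) \to H^2_{dR}(X)$ for the de Rham quotient (closed forms to cohomology classes), we have $\Phi_{D,\lambda} = \pi^{1,1} \circ q \circ \Pi^2$. For claim (1), I would observe that $\Pi^2$ is already a well-defined linear surjection by Proposition \ref{prop:projection_basic_properties}, that $q$ is the standard well-defined linear passage to cohomology, and that the Hodge projection $\pi^{1,1}$ is linear by construction. Well-definedness and linearity of $\Phi_{D,\lambda}$ then follow immediately, since a composition of well-defined linear maps is again well-defined and linear.

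For claim (2), the strategy is to show each factor is surjective onto its stated target, so the composite is surjective. The map $\Pi^2$ surjects onto $Z^2_{dR}(X)$ by Proposition \ref{prop:projection_basic_properties}, using that $\mathcal{K}_{\mathfrak{su}(2)}^2(\lambda) \neq \{0\}$ when $\lambda \neq 0$ in order to supply a nonzero tensor factor $s$. The quotient $q$ is surjective by the de Rham theorem, since every class has a closed representative. Finally, $\pi^{1,1}$ is surjective onto $H^{1,1}(X,\mathbb{C})$ because in the Hodge decomposition $H^2(X,\mathbb{C}) = H^{2,0} \oplus H^{1,1} \oplus H^{0,2}$ the middle summand $H^{1,1}$ is a direct factor on which $\pi^{1,1}$ restricts to the identity. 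Concretely, given $\eta \in H^{1,1}(X,\mathbb{C})$, I would view it as a class in $H^2(X,\mathbb{C})$, choose a closed $2$-form representative $\omega$, pick any nonzero $s \in \mathcal{K}_{\mathfrak{su}(2)}^2(\lambda)$, invoke Proposition \ref{prop:degenerate_cocycle_characterization} to conclude $\omega \otimes s \in Z_{deg}^2(D,\lambda)$, and check that $\Phi_{D,\lambda}(\omega \otimes s) = \pi^{1,1}([\omega]) = \eta$. Claim (3) is then immediate: surjectivity yields $\mathrm{im}(\Phi_{D,\lambda}) = H^{1,1}(X,\mathbb{C})$, whose dimension is the Hodge number $h^{1,1}(X) = 20$ for any K3 surface.

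The step I expect to be the genuine obstacle is bookkeeping of the coefficient field. The target $H^{1,1}(X,\mathbb{C})$ is a complex $20$-dimensional space, so surjecting onto all of it requires the forms $\omega$ appearing in $Z_{deg}^2(D,\lambda)$ to be allowed complex-valued; if one instead works only with real closed forms, the image of $\pi^{1,1}$ is the real subspace $H^{1,1}(X,\mathbb{R})$ rather than the full complex space, since a real class complexifies with $\overline{\omega^{2,0}} = \omega^{0,2}$ and real $(1,1)$-part. Thus the proof must fix the convention that $\Omega^2(X)$, and hence $Z^2_{dR}(X)$, carries complex coefficients throughout (or else complexify explicitly before applying the Hodge projection), and I would state this convention at the outset. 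The remaining ingredients — surjectivity of $q$ and of $\pi^{1,1}$, the nonvanishing of $\mathcal{K}_{\mathfrak{su}(2)}^2(\lambda)$, and the value $h^{1,1} = 20$ — are standard facts of Hodge theory on K3 surfaces and require no new computation.
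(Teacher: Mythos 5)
Your proposal follows essentially the same route as the paper's own proof: factor $\Phi_{D,\lambda}$ as $\pi^{1,1}\circ q\circ \Pi^2$ and verify that each factor is a well-defined linear surjection, then read off the dimension from $h^{1,1}(X)=20$. Your additional remark about fixing complex coefficients on $\Omega^2(X)$ so that $\pi^{1,1}$ really surjects onto all of $H^{1,1}(X,\mathbb{C})$ is a point the paper leaves implicit, and is worth stating.
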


\begin{proof}
\begin{enumerate}
    \item \textbf{Well-definedness}: The mapping is defined as a composition of well-defined linear mappings (as shown in Definition \ref{def:composite_map_to_11}), therefore it is itself a well-defined linear mapping.

    \item \textbf{Surjectivity}: We analyze each mapping in the composition chain:
    $$ Z_{deg}^2(D,\lambda) \xrightarrow{\Pi^2} Z^2_{dR}(X) \xrightarrow{q} H^2_{dR}(X, \mathbb{C}) \xrightarrow{\pi^{1,1}} H^{1,1}(X, \mathbb{C}) $$
    \begin{itemize}
        \item The mapping $\Pi^2: Z_{deg}^2(D,\lambda) \to Z^2_{dR}(X)$ is surjective (proven in Proposition \ref{prop:projection_basic_properties}), provided that $\mathcal{K}_{\mathfrak{su}(2)}^2(\lambda)$ is nonzero.
        \item The quotient mapping $q: Z^2_{dR}(X) \to H^2_{dR}(X, \mathbb{C})$ is surjective by definition.
        \item For K3 surfaces, the Hodge projection $\pi^{1,1}: H^2_{dR}(X, \mathbb{C}) \to H^{1,1}(X, \mathbb{C})$ is also surjective, since both the source and target spaces are nonzero.
    \end{itemize}
    As a composition of surjections, $\Phi_{D,\lambda}$ is necessarily surjective.

    \item \textbf{Dimension conclusion}: Since the mapping is surjective, its image space is the entire target space $H^{1,1}(X, \mathbb{C})$. For projective K3 surfaces, we know their Hodge number $h^{1,1}(X)=20$. Therefore the conclusion holds.
\end{enumerate}
\end{proof}

\begin{remark}[Geometric Significance of Surjectivity]
\label{rem:geometric_significance_of_surjectivity}
The surjectivity proven in Theorem \ref{thm:composite_map_structure_and_surjectivity} is much more profound than a simple dimensional upper bound estimate; it reveals two core characteristics of this theoretical framework:

\begin{enumerate}
    \item \textbf{Completeness}: The degenerate Spencer theory is "sufficiently rich" to "generate" \textbf{all} (1,1)-classes on K3 surfaces. This shows that our theory is not limited to capturing a small portion of special Hodge classes, but has complete correspondence potential with classical Hodge structures.

    \item \textbf{Role of Constraint Geometry}: For any given Hodge class $[\eta]$ in $H^{1,1}(X, \mathbb{C})$, its fiber under the mapping $\Phi_{D,\lambda}$ is a vast space. The "size" or "redundancy" of this space is completely parameterized by the kernel space $\mathcal{K}_{\mathfrak{su}(2)}^2(\lambda)$ determined by constraint geometry (through $\lambda$). This gives the kernel space $\mathcal{K}_{\mathfrak{su}(2)}^k(\lambda)$ a clear geometric meaning: it describes the degrees of freedom of different "Spencer representations" that realize the same classical Hodge class.
\end{enumerate}
\end{remark}

\begin{corollary}[Surjectivity Analysis of Degenerate Mapping]
\label{cor:surjectivity_analysis}
For the compatible pair $(D,\lambda)$ on K3 surface $X$:
\begin{enumerate}
    \item The mapping $\Pi^2: Z_{deg}^2(D,\lambda) \to Z^2_{dR}(X)$ is surjective;
    \item The dimension of the image of composite mapping $\Phi_{D,\lambda}$ is at most 20;
    \item When $\dim \mathcal{K}_{\mathfrak{su}(2)}^2(\lambda) \geq 1$, $\Phi_{D,\lambda}$ can "reach" all closed 2-forms in de Rham cohomology.
\end{enumerate}
\end{corollary}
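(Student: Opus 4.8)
The plan is to read this corollary as a structured repackaging of the two immediately preceding results—Proposition \ref{prop:projection_basic_properties} and Theorem \ref{thm:composite_map_structure_and_surjectivity}—and to deduce each of its three assertions from them, taking care throughout to track exactly where the nonvanishing of $\mathcal{K}_{\mathfrak{su}(2)}^2(\lambda)$ enters. No new machinery is required; the work is purely in organizing the existing surjectivity statements around the kernel hypothesis and in stating each conclusion at the correct strength.

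For assertion (1), I would invoke Proposition \ref{prop:projection_basic_properties} directly with $k=2$. Concretely, for any closed form $\omega \in Z^2_{dR}(X)$ and any fixed nonzero $s \in \mathcal{K}_{\mathfrak{su}(2)}^2(\lambda)$, Theorem \ref{thm:degeneration_simplification_final} gives $D^2_{D,\lambda}(\omega \otimes s) = d\omega \otimes s = 0$, so $\omega \otimes s \in Z_{deg}^2(D,\lambda)$ and $\Pi^2(\omega \otimes s) = \omega$; this exhibits a preimage for every element of the codomain. For assertion (3), I would then chain this surjectivity of $\Pi^2$ with the surjectivity of the de Rham quotient $Z^2_{dR}(X) \to H^2_{dR}(X,\mathbb{C})$ and of the Hodge projection $\pi^{1,1}$, exactly as in Theorem \ref{thm:composite_map_structure_and_surjectivity}: a composition of surjections is surjective, so under the hypothesis $\dim \mathcal{K}_{\mathfrak{su}(2)}^2(\lambda) \ge 1$ the composite $\Phi_{D,\lambda}$ reaches every de Rham class and, after projection, all of $H^{1,1}(X,\mathbb{C})$.

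For assertion (2), I would argue from the target of the composition: by Definition \ref{def:composite_map_to_11}, $\Phi_{D,\lambda}$ factors through $\pi^{1,1}$ into $H^{1,1}(X,\mathbb{C})$, so its image is a linear subspace of that space, and since a projective K3 surface has $h^{1,1}(X) = 20$ we obtain $\dim(\im \Phi_{D,\lambda}) \le 20$ with no further computation. The main point to watch—rather than a genuine difficulty—is the logical dependence of (1) and (3) on the hypothesis that the second-order kernel is nonzero: were $\mathcal{K}_{\mathfrak{su}(2)}^2(\lambda)$ to vanish, then $Z_{deg}^2(D,\lambda)$ would be trivial and both surjectivity claims would collapse. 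Since the excerpt computes only $\dim \mathcal{K}_{\mathfrak{su}(2)}^1(\lambda)$ explicitly (Example \ref{ex:su2_kernel_computation}) and leaves the second-order dimension open, I would keep the statement conditioned on $\dim \mathcal{K}_{\mathfrak{su}(2)}^2(\lambda) \ge 1$ rather than asserting surjectivity unconditionally, which is precisely why the corollary phrases (3) as a conditional.
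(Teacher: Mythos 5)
Your proposal is correct and follows essentially the same route as the paper, which states this corollary without a separate proof precisely because it is an immediate repackaging of Proposition \ref{prop:projection_basic_properties} (surjectivity of $\Pi^2$ given a nonzero element of $\mathcal{K}_{\mathfrak{su}(2)}^2(\lambda)$) and Theorem \ref{thm:composite_map_structure_and_surjectivity} (composition of surjections, image contained in $H^{1,1}(X,\mathbb{C})$ with $h^{1,1}=20$). Your explicit tracking of where the hypothesis $\dim \mathcal{K}_{\mathfrak{su}(2)}^2(\lambda)\geq 1$ is needed is a sound reading of why item (3) is stated conditionally while the paper leaves that dimension uncomputed.
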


\begin{theorem}[Mirror Symmetry]
\label{thm:zheng2025mirrormetry_composite_map}
The mapping $\Phi_{D,\lambda}$ remains invariant under mirror transformation:
$$\Phi_{D,\lambda} = \Phi_{D,-\lambda}$$
\end{theorem}

\begin{proof}
By Theorem \ref{thm:mirror-stability-compatible}, $\mathcal{K}_{\mathfrak{su}(2)}^2(\lambda) = \mathcal{K}_{\mathfrak{su}(2)}^2(-\lambda)$.

Therefore:
$$Z_{deg}^2(D,\lambda) = Z^2_{dR}(X) \otimes \mathcal{K}_{\mathfrak{su}(2)}^2(\lambda) = Z^2_{dR}(X) \otimes \mathcal{K}_{\mathfrak{su}(2)}^2(-\lambda) = Z_{deg}^2(D,-\lambda)$$

The projection $\Pi^2$ and Hodge projection $\pi^{1,1}$ both do not depend on the sign of $\lambda$, so $\Phi_{D,\lambda} = \Phi_{D,-\lambda}$.
\end{proof}

\begin{example}[Concrete Analysis of SU(2) Case]
\label{ex:su2_concrete_analysis}
For $\mathfrak{g} = \mathfrak{su}(2)$ and $\lambda \neq 0$:

According to the computation in Example \ref{ex:su2_explicit_lambda}, we know $\dim \mathcal{K}_{\mathfrak{su}(2)}^1(\lambda) = 1$.

Now considering the 2nd order case, let $\dim \mathcal{K}_{\mathfrak{su}(2)}^2(\lambda) = d \geq 1$, then:
\begin{align}
\dim Z_{deg}^2(D,\lambda) &= \dim Z^2_{dR}(X) \cdot \dim \mathcal{K}_{\mathfrak{su}(2)}^2(\lambda) \approx \infty \\
\text{im}(\Pi^2) &= Z^2_{dR}(X) \quad \text{(this is surjective)} \\
\dim(\text{im}(\Phi_{D,\lambda})) &\leq 20
\end{align}

The key observation is: regardless of the dimension $d$ of the kernel space (as long as it's nonzero), the canonical projection $\Pi^2$ is always surjective onto the entire de Rham cocycle space $Z^2_{dR}(X)$.
\end{example}

\begin{proposition}[Computational Method for Degenerate Mapping]
\label{prop:computational_method}
The analysis of mapping $\Phi_{D,\lambda}$ can be decomposed into three independent steps:
\begin{enumerate}
    \item Solve the linear equation system $\delta_{\mathfrak{su}(2)}^{\lambda}(s) = 0$ to obtain $\mathcal{K}_{\mathfrak{su}(2)}^2(\lambda)$;
    \item Verify the surjectivity of $\Pi^2: Z_{deg}^2(D,\lambda) \to Z^2_{dR}(X)$;
    \item Analyze the restriction of Hodge projection $\pi^{1,1}$ on $Z^2_{dR}(X)$.
\end{enumerate}
This decomposition transforms abstract Spencer analysis into standard linear algebra and Hodge theory computations.
\end{proposition}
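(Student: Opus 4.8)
The plan is to treat this proposition as a structural unpacking of the definition of $\Phi_{D,\lambda}$ rather than as a claim requiring fresh analysis: since $\Phi_{D,\lambda}$ is \emph{defined} in Definition \ref{def:composite_map_to_11} as a composition of three maps, the whole content is to show that the data entering each link of the chain are drawn from disjoint pieces of machinery. The key structural input I would invoke is the tensor-product decomposition $Z_{deg}^2(D,\lambda) \cong Z^2_{dR}(X) \otimes \mathcal{K}_{\mathfrak{su}(2)}^2(\lambda)$ from Proposition \ref{prop:degenerate_cocycle_characterization}, because it places the symmetric-tensor factor $\mathcal{K}_{\mathfrak{su}(2)}^2(\lambda)$ and the differential-form factor $Z^2_{dR}(X)$ on separate legs of a tensor product. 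This is what ultimately decouples the Lie-algebraic computation from the Hodge-theoretic one.

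With this in hand, I would verify the three steps in turn. For Step 1, $\mathcal{K}_{\mathfrak{su}(2)}^2(\lambda) = \ker(\delta_{\mathfrak{su}(2)}^{\lambda})$ is by definition the solution space of the homogeneous linear system $\delta_{\mathfrak{su}(2)}^{\lambda}(s)=0$ inside the six-dimensional space $\operatorname{Sym}^2(\mathfrak{su}(2))$; using the constructive formula for $\delta^\lambda_{\mathfrak{g}}$ (Definition \ref{def:spencer-operator-constructive}) this is an explicit finite-dimensional linear-algebra problem whose only input is $\lambda$, and which never refers to $X$. For Step 2, Proposition \ref{prop:projection_basic_properties} already establishes that $\Pi^2$ is surjective onto $Z^2_{dR}(X)$; inspecting that argument, the \emph{only} fact about Step 1 it consumes is that $\mathcal{K}_{\mathfrak{su}(2)}^2(\lambda) \neq \{0\}$, so Step 2 reduces to a single nonvanishing check on the output of Step 1 and is otherwise insensitive to the precise kernel. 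For Step 3, the remaining chain $Z^2_{dR}(X) \xrightarrow{q} H^2_{dR}(X) \xrightarrow{\pi^{1,1}} H^{1,1}(X,\mathbb{C})$ makes no reference to $\lambda$ or to $\mathfrak{g}$ at all, and is governed entirely by the complex and Hodge structure of $X$.

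To assemble, I would write $\operatorname{im}(\Phi_{D,\lambda}) = \pi^{1,1}\bigl(q(\operatorname{im}\Pi^2)\bigr)$ and observe that, by Step 2, $\operatorname{im}\Pi^2 = Z^2_{dR}(X)$ as soon as the kernel is nonzero, so the image collapses to $\pi^{1,1}(H^2_{dR}(X))$, which is exactly the object analyzed in Step 3. Thus all $\lambda$-dependence enters only through the binary condition ``$\mathcal{K}_{\mathfrak{su}(2)}^2(\lambda)\neq 0$'' feeding Step 2, while the genuine Hodge-theoretic content is $\lambda$-independent; this is precisely the claimed modularity, and it recovers the surjectivity conclusion of Theorem \ref{thm:composite_map_structure_and_surjectivity} as a consequence.

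The main obstacle, and the point I would phrase most carefully, is the word ``independent.'' Strictly speaking the three steps are \emph{sequential}: Step 2 needs the nonvanishing conclusion of Step 1, and Step 3 acquires geometric meaning only once Step 2 guarantees $\Pi^2$ is onto. The defensible reading — which I would state explicitly — is that the steps are \emph{computationally modular}: each employs a disjoint toolkit (finite-dimensional Lie-algebra linear algebra, an elementary surjectivity argument, and classical Hodge theory), and no step forces one to redo another. The subtle structural fact underpinning this, which I would foreground, is that the tensor factorization of $Z_{deg}^2(D,\lambda)$ is exactly what lets $\Pi^2$ forget the $\mathcal{K}_{\mathfrak{su}(2)}^2(\lambda)$-factor cleanly, so that the Hodge projection in Step 3 never interacts with the constraint algebra. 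Beyond this interpretive care the proposition requires no new estimates, as it merely repackages Proposition \ref{prop:degenerate_cocycle_characterization}, Proposition \ref{prop:projection_basic_properties}, and Definition \ref{def:composite_map_to_11}.
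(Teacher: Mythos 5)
Your proposal is correct and is consistent with how the paper treats this statement: the paper in fact states Proposition \ref{prop:computational_method} \emph{without any proof}, relying implicitly on exactly the ingredients you cite (Definition \ref{def:composite_map_to_11}, the tensor factorization of Proposition \ref{prop:degenerate_cocycle_characterization}, and the surjectivity of Proposition \ref{prop:projection_basic_properties}). Your explicit caveat that the three steps are sequential rather than literally independent, and should be read as computationally modular, is a genuine improvement in precision over the paper's phrasing rather than a deviation from its intent.
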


\begin{theorem}[Functorial Properties of Degenerate Mapping]
\label{thm:functoriality_degeneration_map}
Let $f: X_1 \to X_2$ be an algebraic mapping between K3 surfaces, $(D_i, \lambda_i)$ be compatible pairs on $X_i$ ($i = 1,2$), and assume they are related through $f$.

Then there exists a commutative diagram:
\begin{tikzcd}
	{Z_{deg}^2(D_1,\lambda_1)} & {H^{1,1}(X_1,\mathbb{C})} \\
	{Z_{deg}^2(D_2,\lambda_2)} & {H^{1,1}(X_2,\mathbb{C})}
	\arrow["{\Phi_{D_1,\lambda_1}}", from=1-1, to=1-2]
	\arrow["{f^*}"', from=1-1, to=2-1]
	\arrow["{f^*}"', from=1-2, to=2-2]
	\arrow["{\Phi_{D_2,\lambda_2}}", from=2-1, to=2-2]
\end{tikzcd}
\end{theorem}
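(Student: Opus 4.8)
The plan is to prove that the square commutes by reducing it to the \emph{naturality} of each of the three maps composing $\Phi_{D,\lambda}$ (Definition~\ref{def:composite_map_to_11}): the Spencer projection $\Pi^2$, the de Rham quotient $q$, and the Hodge projection $\pi^{1,1}$. The first task is to pin down the hypothesis that the compatible pairs are ``related through $f$''. I would take this to mean that there is an $\mathrm{SU}(2)$-equivariant principal-bundle morphism $F\colon P_1\to P_2$ covering $f$ such that the dual constraint functions satisfy $\lambda_1=\lambda_2\circ F$ and the distributions satisfy $D_1=(dF)^{-1}(D_2)$. Since both surfaces carry the same structure group, $F$ induces the identity on $\mathfrak{g}=\mathfrak{su}(2)$, hence on $\mathfrak{g}^*$ and on $\mathrm{Sym}^2(\mathfrak g)$; consequently the induced map on Spencer space acts as $f^*\otimes\mathrm{id}$ under the decomposition $Z^2_{deg}\cong Z^2_{dR}\otimes\mathcal{K}^2_{\mathfrak g}(\lambda)$ of Proposition~\ref{prop:degenerate_cocycle_characterization}. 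I note at the outset that for a morphism $f\colon X_1\to X_2$ the canonical induced map is the pullback $f^*\colon\Omega^\bullet(X_2)\to\Omega^\bullet(X_1)$, so the content of the theorem is exactly the naturality of $\Phi$ with respect to $f$, and the square should be read with the pullback orientation on the vertical arrows.

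The first key step is to show that $f^*$ sends degenerate cocycles to degenerate cocycles, i.e.\ that it maps $Z^2_{deg}(D_2,\lambda_2)$ into $Z^2_{deg}(D_1,\lambda_1)$. Using the tensor characterization of Proposition~\ref{prop:degenerate_cocycle_characterization}, this splits into two checks. On the form factor, $f^*$ carries closed $2$-forms to closed $2$-forms because pullback commutes with the exterior differential, $f^* d=d\,f^*$. On the symmetric-tensor factor, the relation $\lambda_1=\lambda_2\circ F$ together with $F$ inducing the identity on $\mathfrak g$ identifies the pointwise constraint covectors, so that the operators $\delta^{\lambda_1}_{\mathfrak g}$ and $\delta^{\lambda_2}_{\mathfrak g}$ coincide and hence $\mathcal{K}^2_{\mathfrak g}(\lambda_1)=\mathcal{K}^2_{\mathfrak g}(\lambda_2)$ as subspaces of $\mathrm{Sym}^2(\mathfrak g)$. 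I would justify this identification using the explicit description of the $\mathfrak{su}(2)$ kernel spaces in Example~\ref{ex:su2_kernel_computation} (and its second-order analogue), where the kernel is determined entirely by the direction of $\lambda$ in $\mathfrak g^*$. Granting these two points, $f^*(\omega\otimes s)=f^*\omega\otimes s$ visibly lies in $Z^2_{deg}(D_1,\lambda_1)$, and the naturality of $\Pi^2$, namely $\Pi^2_1\circ f^*=f^*\circ\Pi^2_2$, is immediate from its definition as projection onto the form factor.

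The second key step is the naturality of the Hodge projection. Since $f$ is an algebraic, and therefore holomorphic, morphism of K3 surfaces, its induced map on cohomology $f^*\colon H^2(X_2,\mathbb{C})\to H^2(X_1,\mathbb{C})$ is a morphism of Hodge structures and so preserves the Hodge bigrading; in particular $f^*\circ\pi^{1,1}_{X_2}=\pi^{1,1}_{X_1}\circ f^*$. Combining this with the elementary naturality $f^*\circ q_{X_2}=q_{X_1}\circ f^*$ of the de Rham quotient and with the naturality of $\Pi^2$ from the previous step, I would chain the three commuting squares to conclude that the outer rectangle commutes, that is $f^*\circ\Phi_{D_2,\lambda_2}=\Phi_{D_1,\lambda_1}\circ f^*$, which is the asserted diagram.

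I expect the main obstacle to be the symmetric-tensor compatibility rather than the Hodge-theoretic step, which is standard. The subtlety is that $\mathcal{K}^2_{\mathfrak g}(\lambda)$ genuinely depends on the direction of $\lambda$ in $\mathfrak g^*$, so the hypothesis ``related through $f$'' must be strong enough to force $\lambda_1$ and $\lambda_2$ to agree under the induced identification of Lie algebras; if one instead only assumes that $\lambda_1$ and $\lambda_2$ are proportional or $\mathrm{Ad}^*$-related, the two kernel subspaces need not coincide, and the vertical tensor map must be replaced by the induced linear map $\mathcal{K}^2_{\mathfrak g}(\lambda_2)\to\mathcal{K}^2_{\mathfrak g}(\lambda_1)$, whose well-definedness would then have to be verified separately. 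A secondary point to handle carefully is the global consistency of $\mathcal{K}^2_{\mathfrak g}(\lambda)$ over the bundle, since $\lambda$ is $G$-equivariant rather than constant; the modified Cartan equation and the $G$-equivariance of the compatible pair are precisely what keep this subspace well-defined up to the relevant symmetry.
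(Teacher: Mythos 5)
Your argument follows the same route as the paper's own (one-line) proof: decompose $\Phi_{D,\lambda}$ into the Spencer projection $\Pi^2$, the de Rham quotient, and the Hodge projection $\pi^{1,1}$, and invoke the naturality of each with respect to $f^*$. Your version is the more complete one --- in particular, you are right that the vertical arrows must be read in the pullback direction (from objects over $X_2$ to objects over $X_1$, opposite to how the diagram is typeset), and that the unspecified hypothesis ``related through $f$'' has to be made strong enough to identify $\mathcal{K}^2_{\mathfrak{g}}(\lambda_1)$ with $\mathcal{K}^2_{\mathfrak{g}}(\lambda_2)$ (or at least to induce a well-defined map between them), a point the paper's proof does not address.
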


\begin{proof}
This follows directly from the construction of degenerate cocycles and the naturality of Hodge projection. The pullback operation $f^*$ commutes with both projection $\Pi^2$ and Hodge projection $\pi^{1,1}$.
\end{proof}

\section{Mirror Symmetry and Theoretical Verification}
\label{sec:zheng2025mirrormetry_and_verification}

In this chapter, we explore the behavior of degeneration phenomena under mirror symmetry transformations.

\subsection{Mirror Transformations in Compatible Pair Framework}
\label{sec:mirror_transform_compatible_pair}

\begin{definition}[Compatible Pair Mirror Transformation]
\label{def:compatible-mirror-transform}
Let $(D,\lambda)$ be a compatible pair satisfying the conditions. Its \textbf{mirror transformation} is defined as: $\mathrm{Mirror}: (D,\lambda) \mapsto (D,-\lambda)$. According to \cite{zheng2025mirror}, $(D,-\lambda)$ is also a legitimate compatible pair. Since $\delta_{\mathfrak{g}}^{\lambda}$ is linear in $\lambda$, this transformation induces the relation: $\delta^{-\lambda}_\mathfrak{g} = -\delta^\lambda_\mathfrak{g}$.
\end{definition}

\begin{theorem}[Mirror Stability of Degeneration Conditions]
\label{thm:mirror-stability-compatible}
The degenerate kernel space remains invariant under mirror transformation: $\mathcal{K}^k_\mathfrak{g}(\lambda) = \mathcal{K}^k_\mathfrak{g}(-\lambda)$.
\end{theorem}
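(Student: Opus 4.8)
The plan is to prove the set equality $\mathcal{K}^k_\mathfrak{g}(\lambda) = \mathcal{K}^k_\mathfrak{g}(-\lambda)$ by showing that the two kernels are kernels of operators that differ only by an overall sign, and a linear operator and its negative have identical kernels. The single essential input is the Mirror Antisymmetry property $\delta^{-\lambda}_\mathfrak{g} = -\delta^\lambda_\mathfrak{g}$, recorded in Lemma \ref{lem:spencer-properties} and restated in Theorem \ref{thm:spencer-operator-mirror} and Definition \ref{def:compatible-mirror-transform}. This identity itself follows immediately from the constructive definition (Definition \ref{def:spencer-operator-constructive}): rule (A) expresses $\delta^\lambda_\mathfrak{g}(v)$ as a pairing $\langle \lambda, \cdot\rangle$ that is manifestly linear in $\lambda$, and rule (B), the graded Leibniz rule, propagates this linearity to all of $\Sym^\bullet(\mathfrak{g})$; hence replacing $\lambda$ by $-\lambda$ multiplies the whole operator by $-1$.

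The argument I would carry out is a one-line double inclusion. First I would fix an arbitrary $s \in \Sym^k(\mathfrak{g})$ and observe the chain of equivalences
\begin{align*}
s \in \mathcal{K}^k_\mathfrak{g}(\lambda)
&\iff \delta^\lambda_\mathfrak{g}(s) = 0 \\
&\iff -\delta^\lambda_\mathfrak{g}(s) = 0 \\
&\iff \delta^{-\lambda}_\mathfrak{g}(s) = 0 \\
&\iff s \in \mathcal{K}^k_\mathfrak{g}(-\lambda),
\end{align*}
where the second equivalence is the trivial fact that a vector vanishes if and only if its negative vanishes, and the third equivalence is exactly the mirror antisymmetry identity applied to $s$. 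Since $s$ was arbitrary, the two kernel spaces coincide as subsets (and hence as linear subspaces) of $\Sym^k(\mathfrak{g})$.

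Honestly, there is no substantive obstacle here: the statement is a formal consequence of antisymmetry, and the only thing one must be careful about is not to overclaim. In particular I would emphasize that the result holds verbatim for every $k$ and for the degenerate value $\lambda = 0$ as well, where both sides equal all of $\Sym^k(\mathfrak{g})$ by Theorem \ref{thm:kernel_properties}. If anything warranted a remark, it would be to note that the equality is of kernels, not of the operators themselves (the operators differ by a sign and are equal only when $\lambda = 0$), so no stronger statement about $\delta^\lambda_\mathfrak{g}$ is being asserted. With these caveats stated, the proof is complete.
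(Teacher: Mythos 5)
Your proof is correct and is essentially identical to the paper's own argument: the same chain of equivalences $\delta^\lambda_\mathfrak{g}(s)=0 \iff -\delta^\lambda_\mathfrak{g}(s)=0 \iff \delta^{-\lambda}_\mathfrak{g}(s)=0$, driven by the mirror antisymmetry identity. The additional remarks on linearity in $\lambda$ and the $\lambda=0$ case are harmless elaborations, not a different route.
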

\begin{proof}
We need to prove $s \in \mathcal{K}^k_\mathfrak{g}(\lambda) \iff s \in \mathcal{K}^k_\mathfrak{g}(-\lambda)$.
$s \in \mathcal{K}^k_\mathfrak{g}(\lambda) \iff \delta^\lambda_\mathfrak{g}(s) = 0 \iff -\delta^\lambda_\mathfrak{g}(s) = 0 \iff \delta^{-\lambda}_\mathfrak{g}(s) = 0 \iff s \in \mathcal{K}^k_\mathfrak{g}(-\lambda)$.
\end{proof}

\begin{corollary}[Mirror Invariance of Degenerate Subspaces]
\label{cor:mirror-invariant-subspace}
The degenerate subspaces spanned by degenerate kernel spaces are completely invariant under mirror transformation: $\mathcal{D}^k_{D,\lambda} = \mathcal{D}^k_{D,-\lambda}$.
\end{corollary}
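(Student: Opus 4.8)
The plan is to reduce the claim entirely to the mirror stability of the kernel space already secured in Theorem \ref{thm:mirror-stability-compatible}. First I would recall the defining formula for the degenerate subspace from Definition \ref{def:degenerate_spencer_element_final}, namely $\mathcal{D}^k_{D,\lambda} = \Omega^k(M) \otimes \mathcal{K}^k_\mathfrak{g}(\lambda)$, and observe that the first tensor factor $\Omega^k(M)$ is a purely geometric object depending only on the base manifold $M$ and the degree $k$; it carries no dependence on the dual constraint function $\lambda$ whatsoever. Consequently the entire $\lambda$-dependence of $\mathcal{D}^k_{D,\lambda}$ is concentrated in the second tensor factor $\mathcal{K}^k_\mathfrak{g}(\lambda)$, and the problem collapses to comparing kernel spaces.

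Next I would invoke Theorem \ref{thm:mirror-stability-compatible}, which asserts $\mathcal{K}^k_\mathfrak{g}(\lambda) = \mathcal{K}^k_\mathfrak{g}(-\lambda)$ as \emph{equal} subspaces of $\operatorname{Sym}^k(\mathfrak{g})$ (not merely isomorphic ones), itself a formal consequence of the mirror antisymmetry $\delta^{-\lambda}_\mathfrak{g} = -\delta^\lambda_\mathfrak{g}$ of Lemma \ref{lem:spencer-properties}. Substituting the transformed constraint $-\lambda$ into the definition yields $\mathcal{D}^k_{D,-\lambda} = \Omega^k(M) \otimes \mathcal{K}^k_\mathfrak{g}(-\lambda)$, and replacing the second factor by the equal space $\mathcal{K}^k_\mathfrak{g}(\lambda)$ immediately produces $\Omega^k(M) \otimes \mathcal{K}^k_\mathfrak{g}(\lambda) = \mathcal{D}^k_{D,\lambda}$, giving the asserted identity.

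There is no genuine analytic or algebraic obstacle at this level; the corollary is a direct propagation of the kernel-space invariance through the tensor construction. The only point deserving explicit care is to confirm that the ambient Spencer space is itself independent of $\lambda$ as a vector space, so that the two tensor products live inside a common space and can be compared as equal \emph{sets} rather than up to an intertwining isomorphism. This is clear from Definition \ref{def:spencer-complex-main}, since $S^k_{D,\lambda} = \Omega^k(M) \otimes \operatorname{Sym}^k(\mathfrak{g})$ makes no reference to $\lambda$. This observation is precisely what upgrades the statement from the weaker isomorphism $\mathcal{D}^k_{D,\lambda} \cong \mathcal{D}^k_{D,-\lambda}$ to the strict equality claimed in the corollary, and it is worth stating it explicitly to distinguish this result from the cohomological isomorphisms appearing elsewhere in the mirror theory.
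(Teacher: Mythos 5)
Your proposal is correct and follows essentially the same route as the paper's own proof: substitute the definition $\mathcal{D}^k_{D,\lambda} = \Omega^k(M) \otimes \mathcal{K}^k_\mathfrak{g}(\lambda)$ and apply the kernel-space equality $\mathcal{K}^k_\mathfrak{g}(\lambda) = \mathcal{K}^k_\mathfrak{g}(-\lambda)$ from Theorem \ref{thm:mirror-stability-compatible}. Your additional remark that the ambient space $S^k_{D,\lambda}$ is $\lambda$-independent, so the result is a strict set equality rather than a mere isomorphism, is a worthwhile clarification the paper leaves implicit.
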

\begin{proof}
According to Definition \ref{def:degenerate_spencer_element_final} and Theorem \ref{thm:mirror-stability-compatible},
$\mathcal{D}^k_{D,\lambda} = \Omega^k(M) \otimes \mathcal{K}^k_\mathfrak{g}(\lambda) = \Omega^k(M) \otimes \mathcal{K}^k_\mathfrak{g}(-\lambda) = \mathcal{D}^k_{D,-\lambda}$.
\end{proof}

\subsection{Theoretical Verification and Computational Framework}
\label{sec:verification_framework}

\begin{algorithm}[H]
\caption{Degeneration Phenomena and Mirror Symmetry Analysis Framework}
\label{alg:degeneration_and_mirror_analysis}
\begin{algorithmic}[1]
\Require Parallelizable manifold $M$, compatible pair $(D,\lambda)$, semisimple Lie group $G$
\Ensure Spencer degenerate kernel spaces, degenerate subspaces and their mirror properties
\State \textbf{Geometric condition verification}: Verify that compatible pair $(D,\lambda)$ satisfies Assumptions \ref{ass:basic-geometry} and \ref{ass:compatible-pair}.
\State \textbf{Kernel space analysis}: For $k=0, 1, \dots$, construct operator $\delta^\lambda_\mathfrak{g}: \operatorname{Sym}^k(\mathfrak{g}) \to \operatorname{Sym}^{k+1}(\mathfrak{g})$, and solve linear equation system $\delta^\lambda_\mathfrak{g}(s)=0$ to determine kernel space $\mathcal{K}^k_\mathfrak{g}(\lambda)$ and its dimension.
\State \textbf{Degenerate subspace analysis}: Construct $\mathcal{D}^k_{D,\lambda} = \Omega^k(M) \otimes \mathcal{K}^k_\mathfrak{g}(\lambda)$, and verify differential simplification Theorem \ref{thm:degeneration_simplification_final}.
\State \textbf{Mirror symmetry analysis}: Apply Theorem \ref{thm:mirror-stability-compatible}, confirm that degenerate subspaces are invariant under $D \to -D$, and analyze the relationship between differential operators $D_{D,\lambda}$ and $D_{D,-\lambda}$ on $\mathcal{D}^k_{D,\lambda}$.
\State \textbf{Connection to algebraic geometry}: If $M$ is a projective manifold, construct mapping $\Phi_{D,\lambda}$ (as described in Theorem \ref{thm:k3-degenerate-to-algebraic}), and explore the algebraicity of its image.
\end{algorithmic}
\end{algorithm}

\section{Conclusion and Outlook}
\label{sec:conclusion_and_outlook}

Under the rigorous compatible pair theory framework, the core breakthrough achieved in this paper lies in discovering and systematically elucidating the differential degeneration phenomenon in Spencer complexes. We proved that when symmetric tensors $s$ satisfy the $\lambda$-dependent algebraic condition $\delta_{\mathfrak{g}}^{\lambda}(s)=0$, the complex Spencer differential $D_{D,\lambda}$ degenerates to the classical exterior differential $d$. This discovery is crucial as it fundamentally simplifies the theoretical structure. Based on this, we further rigorously proved that this degeneration condition is stable under the mirror transformation $(D,\lambda) \mapsto (D,-\lambda)$, i.e., $\mathcal{K}_{\mathfrak{g}}^{k}(\lambda) = \mathcal{K}_{\mathfrak{g}}^{k}(-\lambda)$, providing a completely new perspective for understanding symmetries in constraint geometry.

The most important value of this degeneration theory lies in its construction of a \textbf{canonical projection mapping} that builds a solid bridge between abstract Spencer theory and concrete algebraic geometry. A direct consequence of Theorem \ref{thm:degeneration_simplification_final} is that for any degenerate Spencer cocycle $[\omega \otimes s]$, its differential form component $\omega$ must be a de Rham cocycle. This enables us to "project" problems from the Spencer complex framework to classical de Rham cohomology theory, allowing us to utilize a series of mature powerful tools such as Hodge theory. Particularly, on Kähler manifolds, we can analyze the Hodge types of these projected cohomology classes and use Lefschetz's (1,1)-theorem to explore their algebraicity. Moreover, the mirror stability of degeneration conditions endows these de Rham classes with additional symmetry constraints, which may be closely related to their profound arithmetic properties.

Based on these observations, we propose the core conjecture of this paper: under appropriate "algebraic compatibility" conditions, there exists a natural correspondence between degenerate Spencer harmonic classes and algebraic cycles. The path to proving this core conjecture points to three clear and crucial directions for future research. First, it is necessary to rigorously construct mappings from subgroups generated by degenerate classes in Spencer cohomology to de Rham cohomology groups, and clearly characterize their algebraic structure. Second, and most challenging, is to determine the precise geometric and arithmetic conditions that a degenerate harmonic class must satisfy to become an algebraic class, extending our preliminary results on K3 surfaces to the general case. Finally, to make this theory more universally applicable, complete classification of degenerate kernel spaces $\mathcal{K}_{\mathfrak{g}}^{k}(\lambda)$ under different Lie groups $G$ and parameters $\lambda$ will be crucial foundational algebraic work.

Looking forward, solving these theoretical problems will open doors to a series of applications. This theory has the potential to be applied to higher-dimensional Calabi-Yau manifolds (such as 3-folds and 4-folds) and may establish connections with mirror symmetry conjectures in string theory. At a more concrete level, the simplification properties of degeneration phenomena can spawn efficient numerical algorithms for analyzing constraint systems and find direct physical correspondences in constraint mechanics and gauge field theory. Ultimately, based on degeneration theory, we may be able to define new, computable topological invariants.

In summary, the Spencer differential degeneration theory established in this paper not only enriches constraint geometry theory itself by revealing an intrinsic simplification mechanism, but more importantly, it builds an explorable bridge between Spencer theory, differential geometry, and algebraic geometry. Although the path to the other end of the bridge still requires arduous effort, we believe that the mathematical foundations laid by this paper, the technical approaches provided, and the open problems identified will provide powerful guidance for future research in this interdisciplinary field.

\bibliographystyle{alpha}
\bibliography{ref} 

\begin{thebibliography}{BHPVdV04}

\bibitem[Arn78]{ArnoldMarsden1978}
Vladimir~I. Arnold.
\newblock {\em Mathematical Methods of Classical Mechanics}.
\newblock Springer-Verlag, 1978.

\bibitem[AS63]{AtiyahSinger1963}
Michael~F. Atiyah and Isadore~M. Singer.
\newblock The index of elliptic operators on compact manifolds.
\newblock {\em Bulletin of the American Mathematical Society}, 69(3):422--433, 1963.

\bibitem[Ati57]{Atiyah1957}
Michael~F. Atiyah.
\newblock Complex analytic connections in fibre bundles.
\newblock {\em Transactions of the American Mathematical Society}, 85(1):181--207, 1957.

\bibitem[Bea85]{Beauville1983}
Arnaud Beauville.
\newblock {\em Géométrie des surfaces K3: modules et périodes}, volume 126.
\newblock 1985.

\bibitem[BHPVdV04]{Barth1984}
Wolf~P. Barth, Klaus Hulek, Chris A.~M. Peters, and Antonius Van~de Ven.
\newblock {\em Compact Complex Surfaces}.
\newblock Springer-Verlag, 2004.

\bibitem[BS58]{BorelSerre1958}
Armand Borel and Jean-Pierre Serre.
\newblock Le théorème de riemann-roch.
\newblock {\em Bulletin de la Société Mathématique de France}, 86:97--136, 1958.

\bibitem[CW59]{ChernWeil1944}
Shiing-Shen Chern and Andr{\'e} Weil.
\newblock Characteristic classes of hermitian manifolds.
\newblock {\em American Journal of Mathematics}, 81(4):725--757, 1959.

\bibitem[dR31]{deRham1931}
Georges de~Rham.
\newblock {\em Sur l'analysis situs des variétés à $n$ dimensions}, volume~10.
\newblock 1931.

\bibitem[dR55]{deRham1955}
Georges de~Rham.
\newblock {\em Variétés différentiables}.
\newblock Hermann, 1955.

\bibitem[GH78]{Griffiths1969}
Phillip Griffiths and Joseph Harris.
\newblock {\em Principles of Algebraic Geometry}.
\newblock Wiley, 1978.

\bibitem[Gro61]{Grothendieck1961}
Alexander Grothendieck.
\newblock Éléments de géométrie algébrique ii.
\newblock {\em Publications Mathématiques de l'IHÉS}, 8:5--222, 1961.

\bibitem[Hod50]{Hodge1950}
William V.~D. Hodge.
\newblock The topological invariants of algebraic varieties.
\newblock {\em Proceedings of the International Congress of Mathematicians}, 1:182--192, 1950.

\bibitem[KN63]{KobayashiNomizu1963}
Shoshichi Kobayashi and Katsumi Nomizu.
\newblock {\em Foundations of Differential Geometry}, volume~1.
\newblock Interscience Publishers, 1963.

\bibitem[Lef24]{Lefschetz1924}
Solomon Lefschetz.
\newblock L'analysis situs et la géométrie algébrique.
\newblock {\em Gauthier-Villars}, 1924.

\bibitem[Qui63]{SpencerQuillian1963}
Daniel~G. Quillen.
\newblock Formal schemes and formal groups.
\newblock {\em Séminaire Bourbaki}, 6:191--220, 1963.

\bibitem[Rie57]{Riemann1857}
Bernhard Riemann.
\newblock {\em Theorie der Abel'schen Funktionen}.
\newblock Borntraeger, 1857.

\bibitem[Ser56]{Serre1956}
Jean-Pierre Serre.
\newblock Géométrie algébrique et géométrie analytique.
\newblock {\em Annales de l'institut Fourier}, 6:1--42, 1956.

\bibitem[Spe62]{Spencer1962}
Donald~C. Spencer.
\newblock Deformation of structures on manifolds defined by transitive, continuous pseudogroups.
\newblock {\em Annals of Mathematics}, 76(2):306--445, 1962.

\bibitem[YM54]{YangMills1954}
Chen-Ning Yang and Robert~L. Mills.
\newblock Conservation of isotopic spin and isotopic gauge invariance.
\newblock {\em Physical Review}, 96(1):191--195, 1954.

\bibitem[Zhe25a]{zheng2025constructing}
Dongzhe Zheng.
\newblock Constructing two metrics for spencer cohomology: Hodge decomposition of constrained bundles.
\newblock {\em arXiv preprint arXiv:2506.00752}, 2025.

\bibitem[Zhe25b]{zheng2025dynamical}
Dongzhe Zheng.
\newblock Dynamical geometric theory of principal bundle constrained systems: Strong transversality conditions and variational framework for gauge field coupling.
\newblock {\em arXiv preprint arXiv:2505.16766}, 2025.

\bibitem[Zhe25c]{zheng2025geometric}
Dongzhe Zheng.
\newblock Geometric duality between constraints and gauge fields: Mirror symmetry and spencer isomorphisms of compatible pairs on principal bundles.
\newblock {\em arXiv preprint arXiv:2506.00728}, 2025.

\bibitem[Zhe25d]{zheng2025analytic}
Dongzhe Zheng.
\newblock Mirror symmetry in geometric constraints: Analytic and riemann-roch perspectives.
\newblock {\em arXiv preprint arXiv:2506.06610}, 2025.

\bibitem[Zhe25e]{zheng2025mirror}
Dongzhe Zheng.
\newblock Mirror symmetry of spencer-hodge decompositions in constrained geometric systems.
\newblock {\em arXiv preprint arXiv:2506.05816}, 2025.

\bibitem[Zhe25f]{zheng2025spencer-riemann-roch}
Dongzhe Zheng.
\newblock Spencer-riemann-roch theory: Mirror symmetry of hodge decompositions and characteristic classes in constrained geometry.
\newblock {\em arXiv preprint arXiv:2506.05915}, 2025.

\end{thebibliography}

\end{document}